\newcommand{\Claim}[1]{\noindent\textbf{Claim. }\textit{#1}\\}
\newtheorem{theorem}{Theorem}[section]
\newtheorem{proposition}[theorem]{Proposition}
\newtheorem{lemma}[theorem]{Lemma}
\theoremstyle{definition}
\newtheorem{definition}[theorem]{Definition}
\newenvironment{proofc}{\noindent\textit{Proof of Claim.}}{\\}
\newcommand{\Case}[1]{\textbf{Case #1.}}
\begin{document}

\title[Finite group actions on Haken Manifolds]{Handlebody-preserving finite group actions on Haken manifolds with Heegaard genus two.}
\author{Jungsoo Kim}
\date{}
\begin{abstract} Let $M$ be a closed orientable $3$-manifold with a genus two Heegaard splitting $(V_1, V_2; F)$ and a non-trivial JSJ-decomposition, where all components of the intersection of the JSJ-tori and $V_i$ are not $\partial$-parallel in $V_i$ for $i=1,2$. If $G$ is a finite group of orientation-preserving diffeomorphisms acting on $M$ which preserves each handlebody of the Heegaard splitting and each piece of the JSJ-decomposition of $M$, then $G\cong \mathbb{Z}_2$ or $\mathbb{D}_2$ if $V_j\cap(\cup T_i)$ consists of at most two disks or at most two annuli.
\end{abstract}

\address{\parbox{4in}{Department of Mathematics\\
Konkuk University\\
Seoul 143-701\\ Korea\medskip}} \email{pibonazi@gmail.com}
\subjclass[2000]{Primary 57M99; Secondary 57S17}

\maketitle
\tableofcontents

\section[Introduction]{Introduction}
Let $M$ be a closed orientable $3$-manifold of Heegaard genus two with a non-trivial JSJ-decomposition and $G$ be a finite group of orientation-preserving diffeomorphisms acting on $M$ which preserves each handlebody of the Heegaard splitting. Let $(V_1,V_2;F)$ be the Heegaard splitting and $\cup T_i$ be the union of the JSJ-tori.

Now we get a question that how we can classify such $G$ under proper criteria.
If $G$ preserves each piece of the JSJ-decomposition, then the quotient orbifold $M/G$ can be assumed as the union of quotient orbifolds, where each quotient orbifold is induced from the quotient of each piece of the JSJ-decomposition by $G$. In this context, we will give a restriction to $G$, i.e. $G$ preserves each piece of the JSJ-decomposition.

D. McCullough, A. Miller and B. Zimmermann found a necessary and sufficient condition under which a finite group of orientation-preserving diffeomorphisms acts on a handlebody by the concepts \textit{``$G$-admissible graphs of groups''} and \textit{``the Euler characteristic of a graph of groups''} in \cite{MMZ}. In particular, J. Kalliongis and A. Miller characterized the finite groups of diffeomorphisms acting on a solid torus in \cite{KM}. Also, there is an exact characterization for the quotient orbifolds from a genus two handlebody by finite groups of orientation-preserving diffeomorphisms using the idea of \cite{MMZ} in \cite{KJS}.

In this article, we will determine the possible isomorphism types of finite groups of orientation-preserving diffeomorphisms acting on $M$ which preserve each $V_i$ for $i=1,2$ and each piece of the JSJ-decomposition by the following theorem.

\begin{theorem}[the Main theorem]\label{main-theorem}
Let $M$ be a closed orientable $3$-manifold with a genus two Heegaard splitting $(V_1, V_2; F)$ and a non-trivial JSJ-decomposition, where all components of the intersection of the JSJ-tori and $V_i$ are not $\partial$-parallel in $V_i$ for $i=1,2$. If $G$ is a finite group of orientation-preserving diffeomorphisms acting on $M$ which preserves each handlebody of the Heegaard splitting and each piece of the JSJ-decomposition of $M$, then $G\cong \mathbb{Z}_2$ or $\mathbb{D}_2$ if $V_j\cap(\cup T_i)$ consists of at most two disks or at most two annuli.
\end{theorem}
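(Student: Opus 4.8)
The plan is to reduce the statement to a problem about a single genus two handlebody. Because $G$ preserves each $V_i$ and each piece of the JSJ-decomposition, it preserves the union of JSJ-tori $\cup T_i$, hence preserves the Heegaard surface $F$ and the surface system $\mathcal{S} = V_j\cap(\cup T_i)$. By the hypothesis that no component of $\mathcal{S}$ is $\partial$-parallel in $V_j$, each component of $\mathcal{S}$ is an essential meridian disk or an essential annulus in $V_j$, and by hypothesis $|\mathcal{S}|\le 2$. Thus it suffices to show: if a finite group $G$ of orientation-preserving diffeomorphisms acts on a genus two handlebody $V=V_j$ preserving such a system $\mathcal{S}$ of at most two essential disks, or at most two essential annuli, then $G\cong\mathbb{Z}_2$ or $G\cong\mathbb{D}_2$.

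First I would set up the quotient orbifold. By the equivariant theory and the characterization of quotients of a genus two handlebody under orientation-preserving finite group actions in \cite{KJS} (building on \cite{MMZ}), the quotient $V/G$ is a handlebody orbifold whose associated graph of groups has orbifold Euler characteristic $\chi^{\mathrm{orb}}(V/G)=\chi(V)/|G|=-1/|G|$. This bounds $|G|\le 12$ and restricts $G$ to finitely many candidate isomorphism types (cyclic, dihedral, and a few exceptional groups). The action of $G$ on the at most two components of $\mathcal{S}$ gives a homomorphism $G\to\mathrm{Sym}(\mathcal{S})$ with image in $\mathbb{Z}_2$, whose kernel $G_0$ preserves each component of $\mathcal{S}$ and hence acts on each component and on the pieces obtained by cutting $V$ along $\mathcal{S}$.

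Next I would run the two cases. In the disk case, cutting $V$ along the at most two essential meridian disks yields a solid torus, two solid tori, or a $3$-ball, and $G_0$ acts on these pieces; here I would invoke the Kalliongis--Miller classification of finite orientation-preserving actions on the solid torus \cite{KM}, together with the requirement that the action respect the regluing along $\mathcal{S}$ and preserve the isotopy class of each meridian curve on $F$. In the annulus case, cutting $V$ along the at most two essential annuli again produces solid-torus pieces, and I would analyze the induced actions on their cores and on the cutting annuli. In both cases the essential point is that an element of order $3$, $4$, or $6$, or a non-abelian action of larger order, cannot simultaneously preserve such a small essential system, act orientation-preservingly on the cut pieces, and be compatible with the gluing data; a fixed-point and rotation-number analysis is intended to rule these out, leaving only involutions and commuting products of involutions, i.e.\ $\mathbb{Z}_2$ or $\mathbb{D}_2$.

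The main obstacle I anticipate is precisely this last step: eliminating the \emph{a priori} rotational symmetries of order greater than two that a genus two handlebody otherwise admits. The delicate part is the annulus case, where an essential annulus may be separating or non-separating and can meet the spine of $V$ in several combinatorially distinct ways, so the argument must track how each candidate group permutes the cores of the cut solid tori and the boundary curves on $F$, and show that compatibility with the orientation-preserving, JSJ-piece-preserving action forces the stabilizer of each piece to be at most $\mathbb{Z}_2$. Assembling these local constraints through the orbifold Euler characteristic relation $\chi^{\mathrm{orb}}(V/G)=-1/|G|$ should then yield $G\cong\mathbb{Z}_2$ or $G\cong\mathbb{D}_2$.
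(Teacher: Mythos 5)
Your opening reduction is where the proof breaks: the purely handlebody-theoretic statement you claim ``it suffices to show'' is false, so no amount of work on the cut pieces can complete the argument from there. Concretely, build a genus two handlebody $V$ from a central $3$-ball $B_0$, two outer $3$-balls $B_1,B_2$, and four $1$-handles, two joining $B_0$ to $B_1$ and two joining $B_0$ to $B_2$; let a generator $g$ act as the $\pi/2$-rotation on $B_0$, swap $B_1$ and $B_2$, and permute the four handles cyclically. This is a genuine orientation-preserving $\mathbb{Z}_4$-action on a genus two handlebody (its quotient is the $3$-ball with two singular arcs of indices $4$ and $2$, which occurs in the list of Proposition \ref{corollary-KJS}; the paper itself uses this action inside the proof of Lemma \ref{lemma-gm-an-2}). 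Now take a properly embedded disk $\tilde D_1\subset B_1$ separating the two handle-feet of $B_1$ and invariant under $g^2|_{B_1}$. It is non-separating, hence essential, in $V$, and $\{\tilde D_1,\,g(\tilde D_1)\}$ is a $\mathbb{Z}_4$-invariant system of two disjoint essential disks (cutting $V$ along it gives a single $3$-ball). So a finite group preserving a system of at most two essential disks need not be $\mathbb{Z}_2$ or $\mathbb{D}_2$. What rescues the theorem is exactly the global hypothesis your reduction discards: $\mathcal{S}=V_1\cap(\cup T_i)$ comes from incompressible JSJ-tori, and Kobayashi's structure theorem restricts which configurations can occur. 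The paper kills the configuration above not by group theory but by topology of $M$: two non-parallel non-separating disks would cut $V_1$ into one $3$-ball, forcing $M-(\cup T_i)$ to be connected, contradicting the fact that the JSJ-tori decompose $M$ (Case 4 of Section \ref{section2-1}, and the same argument inside Lemma \ref{lemma-thm-3}). Likewise, the annulus lemmas (Lemmas \ref{lemma-gm-an-1} and \ref{lemma-gm-an-2}) carry hypotheses --- non-contractibility of the complementary pieces, a meridian disk of $V$ positioned in a prescribed way relative to them --- that can only be verified from the JSJ structure, via Kobayashi's Lemmas \ref{lemma-ko-3-2} and \ref{lemma-ko-3-4} on how essential disks and annuli can sit in a genus two handlebody.

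There is a second gap even in the configurations where your reduction does no harm: the elimination of $\mathbb{Z}_3,\mathbb{Z}_4,\mathbb{Z}_6,\mathbb{D}_3,\mathbb{D}_4,\mathbb{D}_6$ is never carried out; you defer it to a ``fixed-point and rotation-number analysis'' and yourself flag it as the main obstacle. That step is the entire content of Section \ref{section-gms} of the paper, and its mechanism is different from what you sketch: one assembles $V/G$ from the quotients of the cut pieces (Lemma \ref{lemma-KM} and Table \ref{table-1} for solid-torus pieces, the spherical quotients of Figure \ref{fig-z1} for ball pieces) and compares the resulting singular locus against the complete list of genus two handlebody orbifolds in Proposition \ref{corollary-KJS} --- no isolated circles unless $G\cong\mathbb{Z}_2$, parity of the number of valency-three vertices, the number of singular arcs, non-extendability of an index-$4$ arc across a gluing region, and so on. Note also that passing to the kernel $G_0$ of $G\to\mathbb{Z}_2$ acting on the components of $\mathcal{S}$ only constrains $G_0$: since $[G:G_0]\le 2$, showing $G_0\cong\mathbb{Z}_2$ still leaves $G\cong\mathbb{Z}_4$ alive, and ruling out precisely that case (a $\mathbb{Z}_4$ preserving the middle piece and swapping two solid-torus pieces) requires the most delicate argument in the paper, the rotation picture in the proof of Lemma \ref{lemma-gm-an-2}. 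Finally, a small inaccuracy: $\chi^{\mathrm{orb}}(V/G)=-1/|G|$ by itself bounds nothing; the bound $|G|\le 12$, more precisely that $G$ is a subgroup of $\mathbb{D}_4$ or $\mathbb{D}_6$, is the content of \cite{MMZ}, Theorem 8.2(b), cited as Proposition \ref{proposition-MMZ}.
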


In Theorem of p.437 of \cite{KO}, T. Kobayashi characterized the structure of closed, orientable $3$-manifolds with Heegaard genus two which have non-trivial JSJ-decompositions. By this theorem, the possible structures of $M$ can be described as follows.

\begin{enumerate}
    \item  $M=M_1\cup_T M_2$, $T$ is a separating JSJ-torus in $M$. ((i), (ii) and (iii) of Theorem of \cite{KO}.)
    \item $M=M_1\cup_{T_1}M_2\cup_{T_2}M_3$ where each $T_i$ is a separating JSJ-torus in $M$ for $i=1,2$. ((iv) of Theorem of \cite{KO}.)
    \item $M=M_1\cup_{T_1\, ,T_2} M_2$ where each $T_i$ is a non-separating JSJ-torus in $M$ for $i=1,2$. ((v) of Theorem of \cite{KO}.)\\
    In all cases, each $M_i$ is a piece of JSJ-decomposition for $i=1,2,3$.
\end{enumerate}

In particular, T. Kobayashi used a sequence of isotopies of type A which move $\cup T_i$ so that $V_1\cap(\cup T_i)$ consists of minimal number of disks (see p.24 of \cite{JA} for the term ``\textit{isotopy of type A}''), and divided the proof of his theorem into some cases using this idea. In particular, he proved that $V_1\cap(\cup T_i)$ consists of at most two disks if the number of disks is minimal when the JSJ-tori are separating.
Moreover, he also used an additional sequence of isotopies of type A so that $V_1\cap(\cup T_i)$ consists of minimal number of annuli. So we get two conditions for the JSJ-tori and the Heegaard splitting as follows.
\begin{enumerate}[(A).]
    \item $V_1\cap(\cup T_i)$ consists of disks, and the number of disks is at most two,
    \item $V_1\cap(\cup T_i)$ consists of annuli, and the number of annuli is at most two,\\
where we assume that each component of $V_1\cap(\cup T_i)$ is not $\partial$-parallel in $V_1$.
\end{enumerate}
Let the first be \emph{``Condition A''} and the second be \emph{``Condition B''}. We will prove Theorem \ref{main-theorem} if the Heegaard splitting and the JSJ-tori satisfy Condition A or Condition B.

\begin{center}
Acknowledgement
\end{center}
This work was supported by the Korea Science and Engineering
Foundation (KOSEF) grant funded by the Korea government (MOST)
(No.~R01-2007-000-20293-0).

\section{Some geometric lemmas\label{section-gms}}

We first introduce the term \textit{handlebody orbifold}.\\

\begin{definition}(\cite{Z1}, pp.594)\label{definition-first}
A \emph{handlebody orbifold} consists of finitely many orbifolds as in Figure \ref{fig-z1} (i.e., quotients of finite orthogonal group actions on the $3$-ball) connected by 1-handle orbifolds respecting the singular axes and their orders, and such that topologically the result is an orientable handlebody.
\end{definition}

\begin{figure}
\includegraphics[viewport=28 671 550 781, width=12cm]{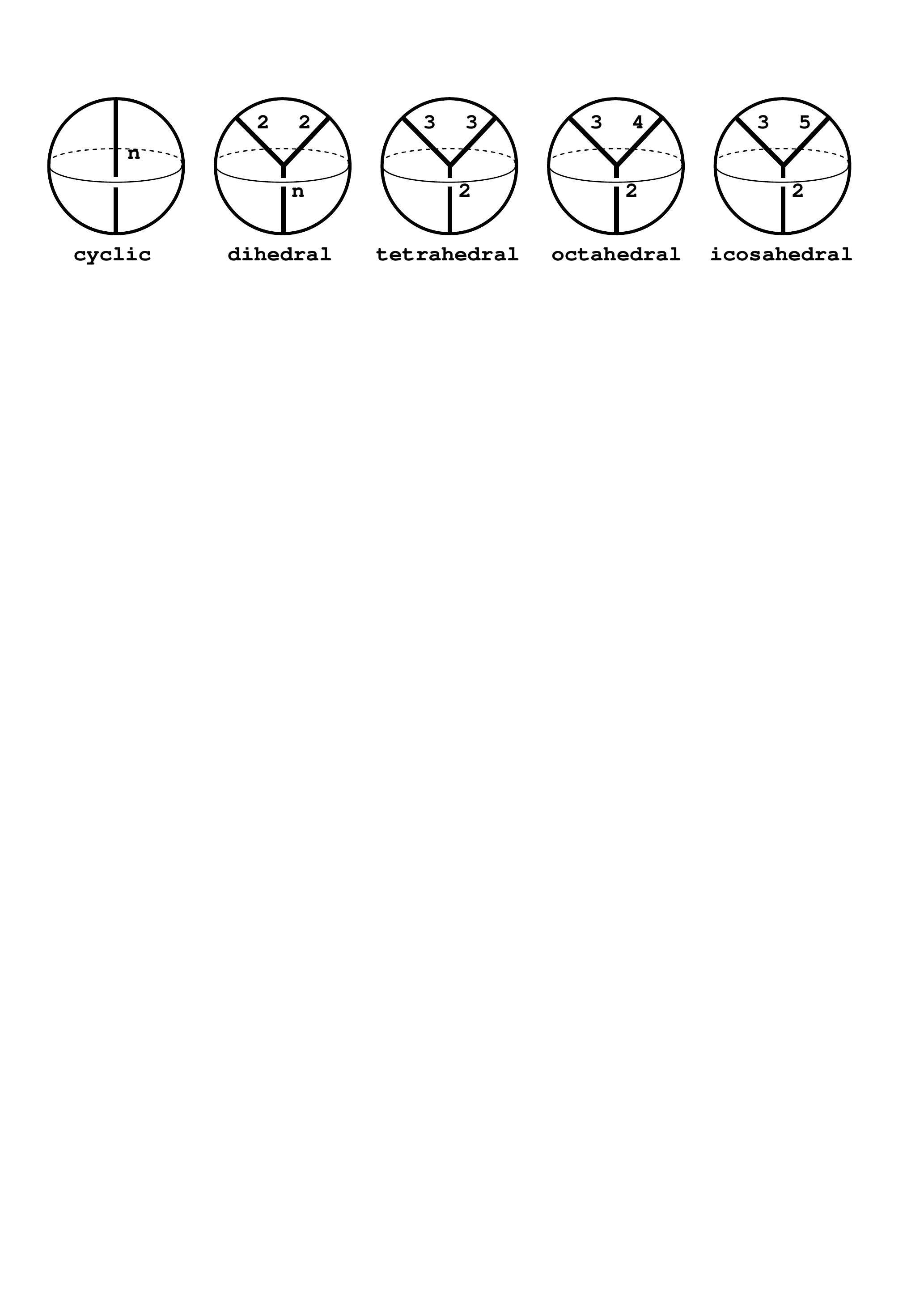}
\caption{The quotients of $3$-ball by spherical groups\label{fig-z1}}
\end{figure}

We can interpret the quotient of a handlebody by a finite group action as a handlebody orbifold by the following proposition.

\begin{proposition}(\cite{Z1}, Proposition 1)
The quotients of handlebodies by finite group actions are exactly the handlebody orbifolds. \label{proposition-Z1}
\end{proposition}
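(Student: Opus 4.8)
The plan is to prove the word \emph{exactly} by establishing both inclusions separately: in the forward direction, that the quotient of a handlebody $V$ by a finite group $G$ of diffeomorphisms carries the structure of a handlebody orbifold; and in the reverse direction, that every handlebody orbifold arises as such a quotient. Throughout I would first average a Riemannian metric on $V$ so that $G$ acts by isometries, so that by Bochner's linearization theorem the action is locally orthogonal: near each point the isotropy group acts as a finite orthogonal group on a ball. This immediately makes $V/G$ a good orbifold whose local models are quotients of $B^3$ by finite spherical groups, and (in the orientation-preserving setting relevant here) forces the singular set to be a $1$-complex of \emph{singular axes}, each labeled by the order of the corresponding rotation.

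For the forward inclusion I would argue by induction on the genus of $V$. If the genus is positive, $V$ admits an essential compressing disk, so by the equivariant loop theorem of Meeks and Yau there is a $G$-invariant system of disjoint essential disks (either a single invariant disk or an orbit of disjoint ones); these can moreover be taken to meet the fixed-point set transversely, so that a rotation axis piercing such a disk becomes a singular axis threading the disk. Cutting $V$ along this invariant system yields a handlebody of strictly smaller total genus with an induced $G$-action, whose quotient is a handlebody orbifold by induction. Reversing the cuts reglues along the disks, which is exactly the operation of attaching $1$-handle orbifolds \emph{respecting the singular axes and their orders}, so $V/G$ is again a handlebody orbifold. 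The base case is $V=B^3$: here I would invoke the linearization theorem that a finite smooth action on the $3$-ball is conjugate to an orthogonal one, so that $B^3/G$ is a spherical-group quotient, i.e. a ball orbifold.

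For the reverse inclusion, let $O$ be a handlebody orbifold. By definition each vertex piece is a global quotient $B^3/\Gamma$ of a spherical group, hence good, and these are joined along $1$-handle orbifolds whose cores are segments of singular axis, realized as quotients of $D^2\times I$ by a cyclic rotation group. Modeling $O$ on the resulting finite graph of finite groups, I would build a manifold cover by gluing together copies of the vertex covers $B^3$ along the lifts of the $1$-handles, with the gluing dictated by the edge homomorphisms of the graph of groups. One then checks that this cover is a $3$-manifold carrying a deck action of $G$ with quotient $O$, and that, since $O$ is assembled from orbifold handles of index $0$ and $1$ only, the cover inherits a handle decomposition of index at most $1$ and is therefore a handlebody $V$, exhibiting $O$ as $V/G$.

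I expect the reverse inclusion to be the delicate part. The real work is verifying that the covers of the vertex and edge pieces glue consistently into a genuine $3$-manifold: one must check that each edge homomorphism embeds the cyclic edge group compatibly into the two adjacent spherical groups, so that the singular axes and their orders match across every $1$-handle, and that the resulting object is a handlebody rather than some other $3$-manifold with the same handle count. In the forward direction the analogous technical point is ensuring that the equivariant disk system can always be chosen so that the induction on genus closes up; this is precisely where the equivariant loop theorem carries the argument.
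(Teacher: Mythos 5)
First, a point of calibration: the paper contains no proof of this statement at all --- Proposition \ref{proposition-Z1} is quoted verbatim from Zimmermann \cite{Z1}, Proposition 1 --- so your attempt has to be measured against Zimmermann's original argument, which is indeed the one you sketch for the forward direction: equivariant loop theorem/Dehn lemma (Meeks--Yau) to get a $G$-invariant essential disk system, cut into balls, linearize the finite actions on the balls, and observe that the quotient is assembled from ball orbifolds and quotients of invariant handle neighborhoods of the disks. One point you gloss over in that direction: the stabilizer of an invariant disk $D$ need not preserve its co-orientation. An orientation-preserving element can carry $D$ to itself acting as a $\pi$-rotation about an axis \emph{lying inside} $D$ (e.g.\ the rotation of a standardly embedded solid torus about a line meeting it in two arcs); then the fixed-point set does not pierce $D$ transversely, and the quotient of $D\times I$ is a ``folded'' ball, not a $1$-handle orbifold respecting the axes. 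This is repairable --- replace each such flipped disk by its two equivariant push-offs, or re-decompose the folded piece as a vertex ball orbifold attached by a nonsingular $1$-handle --- but your phrase ``can moreover be taken to meet the fixed-point set transversely'' assumes away exactly this case rather than dealing with it.

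The genuine gap is in the reverse direction, and it is not where you locate it. Checking that ``each edge homomorphism embeds the cyclic edge group compatibly into the two adjacent spherical groups'' is automatic: that compatibility is literally what Definition \ref{definition-first} means by $1$-handles ``respecting the singular axes and their orders.'' What is not automatic is \emph{finiteness}. If you glue copies of the vertex covers $B^3$ ``as dictated by the edge homomorphisms'' with no further input, the object you construct is the universal (or some intermediate) orbifold covering, whose deck group is $\pi_1^{\mathrm{orb}}(\mathcal{O})$; this group is infinite for essentially every handlebody orbifold with more than one singular handle (already for two ball orbifolds $B^3/\mathbb{Z}_2$ joined by two $1$-handles it contains a free product of finite groups of infinite order), so that cover is a noncompact, infinite-genus object and exhibits $\mathcal{O}$ as a quotient by an \emph{infinite} group --- not what the proposition asserts. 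The missing idea is the standard algebraic input: $\pi_1^{\mathrm{orb}}(\mathcal{O})$ is the fundamental group of a finite graph of finite groups, and such a group admits a homomorphism $\varphi$ onto a \emph{finite} group $H$ that is injective on every vertex group (equivalently, it is virtually free/residually finite in the strong sense of Karrass--Pietrowski--Solitar). The covering corresponding to $\ker\varphi$ is then a compact manifold --- the preimages of the vertex and handle pieces are honest $0$- and $1$-handles because the local groups inject under $\varphi$ --- hence an orientable handlebody $V$, and $\mathcal{O}=V/H$ (or $V/H'$ for the stabilizer $H'$ of a component). Without invoking this finite-quotient theorem, the construction you describe does not close up, and the ``exactly'' in the statement is not proved.
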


D. McCullough, A. Miller and  B. Zimmermann proved the following Proposition.

\begin{proposition}[\cite{MMZ}, Theorem 8.2 (b)] Up to isomorphism the finite groups which act on a handlebody of genus two are the subgroups of $\mathbb{D}_4$ and $\mathbb{D}_6$.\label{proposition-MMZ}
\end{proposition}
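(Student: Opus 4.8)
The plan is to exploit Zimmermann's identification (Proposition \ref{proposition-Z1}) of $H/G$ with a handlebody orbifold, and then to run an Euler-characteristic argument on the associated graph of groups. Write $H$ for the genus two handlebody. Since $\pi_1(H)$ is free of rank two, $G$ acts on the Bass--Serre tree dual to a $G$-invariant system of meridian disks, and the quotient is encoded by a finite graph of groups $(\Gamma,\mathcal{G})$: the vertex orbifolds are quotients of the $3$-ball by spherical groups (the pieces of Figure \ref{fig-z1}) and the edges are $1$-handle orbifolds glued along their ends. The orientation-preserving vertex groups $G_v$ are therefore among the finite subgroups of $SO(3)$, namely $\mathbb{Z}_n$, $\mathbb{D}_n$, $A_4$, $S_4$, $A_5$, while each edge group $G_e$ is a group acting on $D^2\times I$ respecting the product structure and injecting into the vertex groups at its two ends.

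First I would record the Euler-characteristic constraint. Defining
$$\chi(\Gamma,\mathcal{G})=\sum_{v}\frac{1}{|G_v|}-\sum_{e}\frac{1}{|G_e|},$$
multiplicativity of the Euler characteristic under the $|G|$-fold orbifold covering $H\to H/G$, together with $\chi(H)=1-2=-1$, gives $\chi(\Gamma,\mathcal{G})=-1/|G|$. Hence
$$\frac{1}{|G|}=\sum_{e}\frac{1}{|G_e|}-\sum_{v}\frac{1}{|G_v|}>0,$$
so $\Gamma$ has first Betti number at least one and, in order for $|G|$ to be large, the vertex and edge groups must be as large as admissibility allows.

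Second, I would carry out the finite enumeration. The positivity above forces $\Gamma$ to be small (a single loop, or two vertices joined by two edges, and so on), and for each such graph only finitely many assignments of spherical vertex groups and compatible edge groups satisfy the admissibility conditions. Evaluating $\left(\sum_e 1/|G_e|-\sum_v 1/|G_v|\right)^{-1}$ over these finitely many admissible configurations, one finds the value is maximized at $12$, which yields the Hurwitz-type bound $|G|\le 12$, and that the extremal admissible configurations realize exactly $\mathbb{D}_6$ (order $12$) and, separately, $\mathbb{D}_4$ (order $8$). Reading off the group from each admissible graph of groups then shows that every finite $G$ embeds in $\mathbb{D}_4$ or in $\mathbb{D}_6$.

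The main obstacle is precisely this last elimination step, not the numerics. The Euler-characteristic inequality alone permits several groups of order dividing $12$---for instance $A_4$, $\mathbb{Z}_{12}$, $\mathbb{Z}_2\times\mathbb{Z}_6$ at order $12$, and $\mathbb{Z}_8$, $\mathbb{Z}_2\times\mathbb{Z}_4$, $Q_8$ at order $8$---that do not in fact act. Ruling these out requires the sharper admissibility analysis of how an edge group can sit simultaneously inside two spherical vertex groups while respecting the singular axes and their orders; this is, for example, what excludes $A_4$ even though $|A_4|=12(g-1)$. Verifying that the surviving configurations assemble into genuine actions of $\mathbb{D}_4$ and $\mathbb{D}_6$, and that no finite action escapes these two maximal groups, is the technical heart of the argument and is exactly the content of \cite{MMZ}, Theorem 8.2.
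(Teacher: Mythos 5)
The paper never proves this proposition: it is quoted verbatim from \cite{MMZ}, Theorem 8.2 (b) (the citation in the proposition header is the entire ``proof''), and the paper uses it purely as an external input. So the only fair comparison is between your sketch and the argument of the cited reference, whose method --- $G$-admissible graphs of groups and their Euler characteristics --- you have correctly identified; it is the same method the paper's introduction attributes to \cite{MMZ}.

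As a proof, however, your proposal has a genuine gap, and you name it yourself: everything that makes the statement true --- determining \emph{which} groups of order at most $12$ actually admit admissible configurations --- is deferred to ``\cite{MMZ}, Theorem 8.2,'' i.e.\ to the very statement being proved, which is circular. The Euler-characteristic identity $\sum_e 1/|G_e|-\sum_v 1/|G_v|=1/|G|>0$ and the consequent bound $|G|\le 12$ are correct, but they are far from the conclusion: they still permit $A_4$, $\mathbb{Z}_{12}$, $\mathbb{Z}_2\times\mathbb{Z}_6$, $Q_8$, $\mathbb{Z}_8$, $\mathbb{Z}_2\times\mathbb{Z}_4$ and $\mathbb{D}_5$, and eliminating these is precisely the content of the theorem, not a technical afterthought; a proof would have to enumerate the admissible graphs of groups and, for each, identify the finite group $G$ onto which the fundamental group of the graph of groups surjects with the vertex groups injecting. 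You never carry this out. There are also two smaller errors. First, positivity of the defect does not force $b_1(\Gamma)\ge 1$: the extremal genus-two action of $\mathbb{D}_6$ has quotient graph of groups a \emph{tree} (a segment with vertex groups $\mathbb{D}_3$ and $\mathbb{D}_2$ and edge group $\mathbb{Z}_2$, defect $1/2-1/6-1/4=1/12$), and indeed most of the quotients in Figure \ref{fig-ho} are balls with singular graphs. Second, $\mathbb{D}_4$ has order $8$, so it is not an extremal configuration; the answer cannot be read off from the defect-minimizing configurations alone, which again shows that the full enumeration, not the Hurwitz-type bound, is the heart of the matter.
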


In \cite{KJS}, we can classify all handlebody orbifolds from a genus two handlebody by orientation-preserving finite group actions.

\begin{proposition}(\cite{KJS}, Corollary 2.10\label{corollary-KJS})
Let $G$ be a finite group of orientation-preserving diffeomorphisms which acts on a genus two handlebody. Then all possible handlebody orbifolds from a genus two handlebody by $G$ are listed in Figure \ref{fig-ho}.
\end{proposition}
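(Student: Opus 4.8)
The plan is to combine the two structural inputs already available. By Proposition \ref{proposition-MMZ} the isomorphism type of $G$ is restricted to a subgroup of $\mathbb{D}_4$ or $\mathbb{D}_6$, and by Proposition \ref{proposition-Z1} every quotient of a handlebody by a finite group action is a handlebody orbifold in the sense of Definition \ref{definition-first}. Thus the quotient $W/G$ of the genus two handlebody $W$ is assembled from ball quotients $B^3/H$ (the pieces of Figure \ref{fig-z1}) glued along $1$-handle orbifolds, and this assembly is recorded by a $G$-admissible graph of groups $(\Gamma,\mathcal{G})$ in the sense of McCullough--Miller--Zimmermann \cite{MMZ}. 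In this encoding the vertex groups $\mathcal{G}_v$ are the finite orientation-preserving (hence $SO(3)$) isotropy groups realized by the ball quotients, while the edge groups $\mathcal{G}_e$, being stabilizers of $1$-handle cores, are cyclic.

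The engine of the proof is the orbifold Euler characteristic. Since $W$ is a genus two handlebody, $\chi(W)=-1$, and multiplicativity under the orbifold covering $W\to W/G$ gives $\chi^{\mathrm{orb}}(W/G)=\chi(W)/|G|=-1/|G|$. On the other hand, for a handlebody orbifold the orbifold Euler characteristic is read off from the graph of groups by $\chi^{\mathrm{orb}}(W/G)=\sum_{v\in V(\Gamma)}1/|\mathcal{G}_v|-\sum_{e\in E(\Gamma)}1/|\mathcal{G}_e|$, since each ball quotient contributes $1/|\mathcal{G}_v|$ and each $1$-handle is glued along a disk quotient contributing $-1/|\mathcal{G}_e|$. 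Equating the two expressions yields the master identity
$$\sum_{v}\frac{1}{|\mathcal{G}_v|}-\sum_{e}\frac{1}{|\mathcal{G}_e|}=-\frac{1}{|G|},$$
which, together with the requirement that the $|G|$-fold cover be a genus two handlebody, is what pins down the admissible graphs.

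First I would run through the finitely many candidate groups supplied by Proposition \ref{proposition-MMZ}, namely the subgroups of $\mathbb{D}_4$ (the trivial group, $\mathbb{Z}_2$, $\mathbb{Z}_4$, $\mathbb{D}_2$, $\mathbb{D}_4$) and the further subgroups of $\mathbb{D}_6$ ($\mathbb{Z}_3$, $\mathbb{Z}_6$, $\mathbb{D}_3$, $\mathbb{D}_6$). For each such $G$ the vertex groups are constrained to be subgroups of $G$ occurring as finite rotation groups, the edge groups to be cyclic subgroups injecting into the adjacent vertex groups, and the right-hand side of the master identity is fixed. Solving this Diophantine-type equation over connected graphs, subject to the $G$-admissibility conditions of \cite{MMZ} (which guarantee the data genuinely arise from an orientation-preserving action and that the resulting total space is a genus two handlebody), leaves only finitely many graphs of groups for each $G$. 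Converting each surviving graph back into its ball-quotient-and-$1$-handle picture via Proposition \ref{proposition-Z1} produces the corresponding handlebody orbifold, and these are exactly the orbifolds displayed in Figure \ref{fig-ho}.

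The main obstacle is the completeness and non-redundancy of this enumeration. On the completeness side one must verify that no admissible graph of groups is overlooked: the positivity of each $1/|\mathcal{G}_v|$ against the small fixed negative target forces the first Betti number of $\Gamma$ (which equals the genus of the underlying quotient handlebody, hence is at most two), as well as the vertex and edge orders, to be small, but the dihedral vertex groups and the reflections they contain must be tracked carefully to confirm that orientation is preserved and that an edge group embeds compatibly into both of its endpoints. On the non-redundancy side, distinct graph-of-groups decompositions can present the same handlebody orbifold, so I would identify these coincidences before reading off the final list. Once this bookkeeping is settled, matching each realizable orbifold to its entry in Figure \ref{fig-ho} is routine, completing the classification.
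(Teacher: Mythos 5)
The paper gives no internal proof of this proposition---it is imported verbatim from \cite{KJS} (Corollary 2.10), whose method the introduction explicitly describes as ``using the idea of \cite{MMZ}'' via $G$-admissible graphs of groups and the Euler characteristic of a graph of groups, which is exactly the machinery you deploy (the identity $\sum_v 1/|\mathcal{G}_v|-\sum_e 1/|\mathcal{G}_e|=\chi(W)/|G|=-1/|G|$, vertex groups from the ball quotients of Figure \ref{fig-z1}, cyclic edge groups, and the group list from Proposition \ref{proposition-MMZ}). So your proposal follows essentially the same approach as the cited source, and is correct as an outline, with the finite enumeration and redundancy-checking you flag being precisely the bookkeeping carried out in \cite{KJS}.
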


\begin{figure}
\includegraphics[viewport=23 128 480 778, width=8cm]{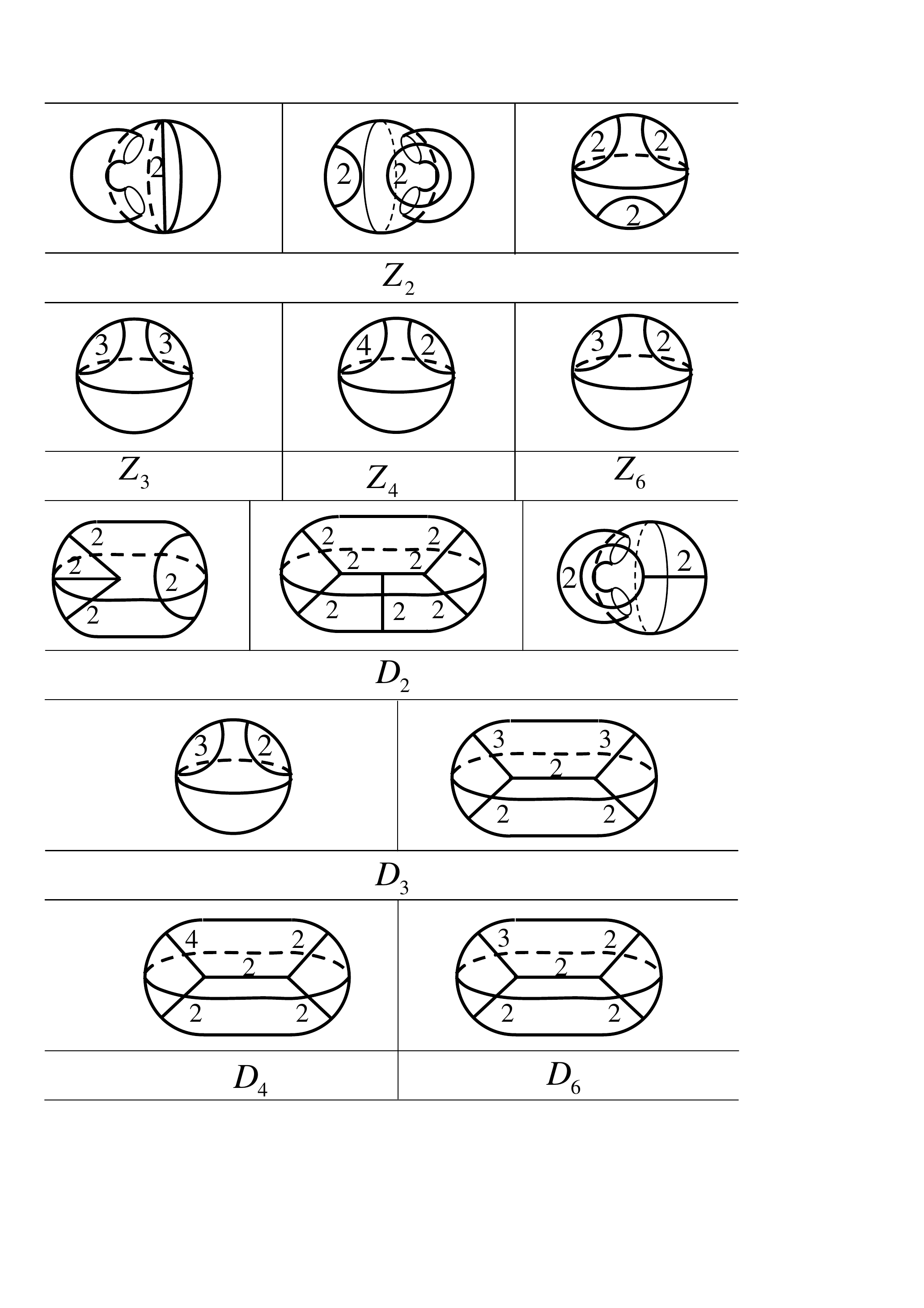}
\caption{All handlebody orbifolds from a genus two handlebody.\label{fig-ho}}
\end{figure}

J. Kalliongis and A. Miller derived all possible quotient orbifolds from a solid torus by orientable preserving finite group actions by the following lemma.

\begin{lemma}\label{lemma-KM}(\cite{KM}, p.377 or Table 2 of p.394)
Let $G$ be a group of orientation preserving finite smooth actions on a solid torus. Then all possible quotient orbifolds by $G$ are classified into the two types $V(A0,k)$ and $V(B0,k)$ as Figure \ref{figure-KM}.
\end{lemma}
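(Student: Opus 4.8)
The plan is to put the action into a standard fiber-preserving form and then read off the quotient from the induced data on the base disk and on the circle fibers. First I would use the fact that the solid torus $V\cong D^2\times S^1$ carries an essentially unique Seifert fibration by circles, together with the equivariant Seifert fibered space theorem (the positive solution of the Smith conjecture and its equivariant refinements), to conjugate $G$ by an isotopy so that it preserves this fibration and in fact acts by isometries of a flat product metric on $V$. Once the action is fiber-preserving, it induces a short exact sequence $1\to K\to G\to \bar G\to 1$, where $K$ is the subgroup acting as the identity on the base, so that $K$ rotates each $S^1$-fiber and is cyclic, and $\bar G$ is the induced finite group of diffeomorphisms of the base disk $D^2$.

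Next I would split into two cases according to whether $G$ preserves or reverses the orientation of the core circle $\{0\}\times S^1$. Since $G$ preserves the orientation of $V$, the orientation behaviour on the fibers determines that on the base: if $G$ is fiber-orientation-preserving, then $\bar G$ acts on $D^2$ by orientation-preserving maps, hence $\bar G$ is cyclic and acts by rotations fixing the center; if some element of $G$ reverses the fiber orientation, that element must act on $D^2$ by an orientation-reversing map, so $\bar G$ is dihedral, generated by rotations and reflections of the disk.

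In the first case the quotient $V/G$ is again a solid torus whose core is a singular circle of some order $k$ determined by the cyclic data of $K$ together with $\bar G$; this is exactly the family $V(A0,k)$ of Figure \ref{figure-KM}. In the second case the reflections of the base together with the fiber-reversing elements produce reflector arcs and $\mathbb{Z}_2$-axes in the quotient, and a bookkeeping computation of $V/G$ near the core and near $\partial V$ identifies it with the family $V(B0,k)$. Checking that no further types occur reduces to the observation that the only finite groups of orientation-preserving, respectively orientation-reversing, diffeomorphisms of a disk are cyclic and dihedral, so the two cases above are exhaustive; this recovers the classification of \cite{KM}.

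The step I expect to be the main obstacle is the reduction to fiber-preserving standard form: a priori $G$ need not respect the product structure, and justifying that it can be isotoped to do so is precisely where one must invoke the equivariant Seifert/geometrization theory rather than an elementary argument. After that reduction the remaining work is the bookkeeping of the exact sequence $1\to K\to G\to \bar G\to 1$ in the two cases, which is routine.
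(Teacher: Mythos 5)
You should first be aware that the paper contains no proof of this lemma at all: it is quoted verbatim from Kalliongis--Miller \cite{KM} (p.~377 and Table~2 of p.~394), so the only meaningful comparison is with the argument behind that cited classification. Your outline is essentially a reconstruction of it: reduce the action to a standard form compatible with the product structure on $D^2\times S^1$, then split according to whether every element preserves the orientation of the fibers. In the fiber-orientation-preserving case the induced group on the base disk is cyclic, $G\cong\mathbb{Z}_m\times\mathbb{Z}_l$, and the quotient is a solid torus with singular core, i.e. $V(A0,k)$; otherwise the base group is dihedral, $G\cong\operatorname{dih}(\mathbb{Z}_m\times\mathbb{Z}_l)$, and the fixed arcs of the fiber-reversing elements descend to the singular axes of $V(B0,k)$ --- exactly the two families of Figure \ref{figure-KM} and Table \ref{table-sub}. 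Three caveats on your write-up. First, your claim that the solid torus carries an ``essentially unique'' Seifert fibration is false: it admits infinitely many, one with an exceptional core fiber of each type $(p,q)$; this is harmless, since the argument needs only the existence of an invariant product structure, not its uniqueness. Second, the standardization step, which you rightly single out as the crux, is precisely the structure theorem of \cite{KM}; in that literature it is obtained from Smith-conjecture-era equivariant tools such as the Meeks--Yau equivariant loop theorem/Dehn lemma (the same reference \cite{MY} this paper uses elsewhere), so invoking full equivariant Seifert/geometrization machinery is heavier than necessary but not wrong. Third, a small point of hygiene: since the action is orientation-preserving, the quotient orbifold has no reflector locus --- the fiber-reversing elements have one-dimensional fixed sets, two arcs each, giving order-$2$ axes --- and the correct dichotomy is indeed the one you chose (fiber-orientation behavior), not abelian versus non-abelian, since $\mathbb{Z}_2\cong\mathbb{D}_1$ and $\mathbb{D}_2$ contain fiber-reversing elements and produce quotients of type $V(B0,k)$, consistently with Table \ref{table-1}. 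With these adjustments your proposal is a correct outline of the cited result.
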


\begin{figure}
\centering
\includegraphics[viewport=24 574 567 781,width=10cm]{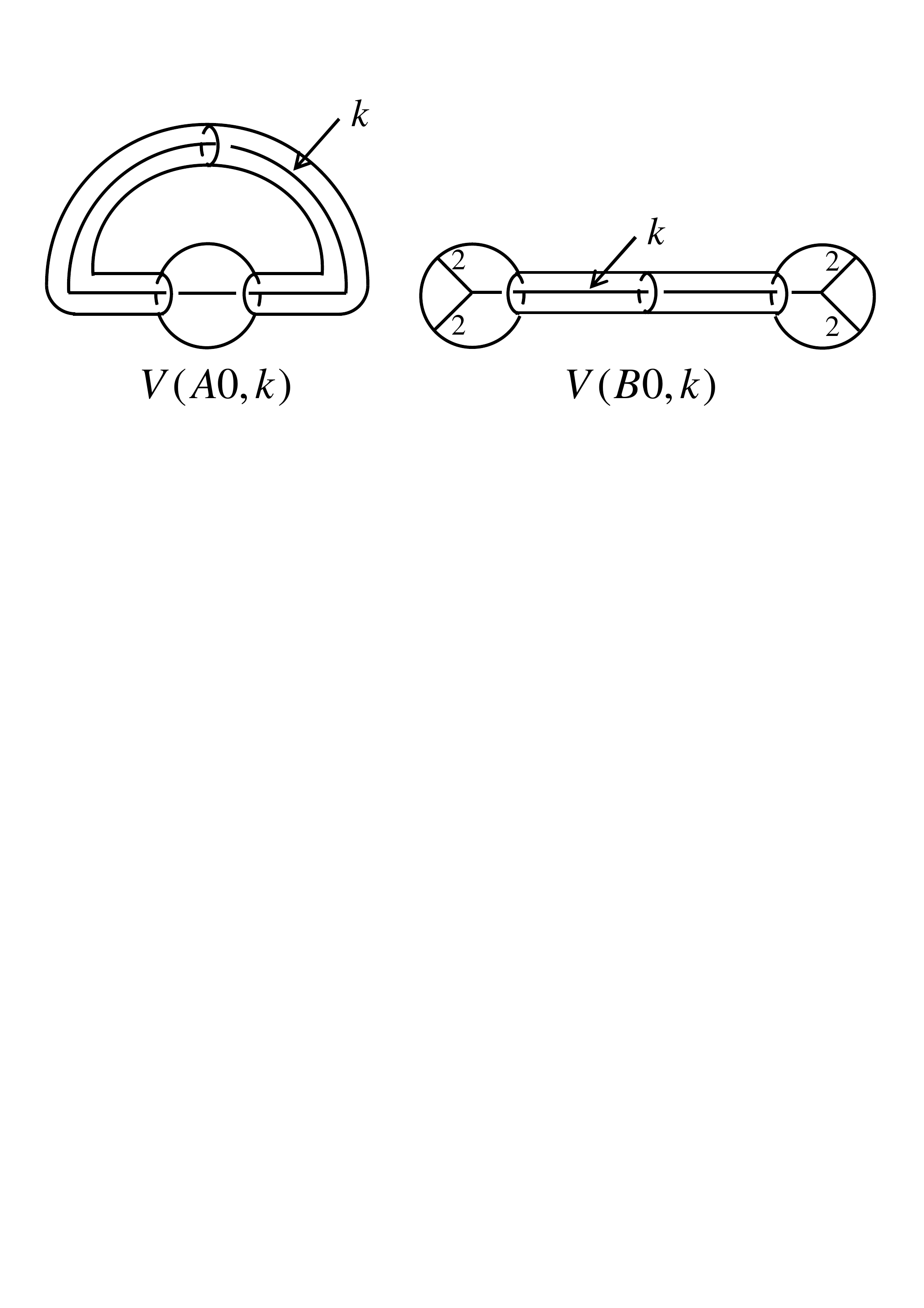}\\
\caption{All handlebody orbifolds from a solid
torus.\label{figure-KM}}
\end{figure}

From now on, we will assume that $G$ acts on a genus two handlebody, so $G$ is a subgroup of $\mathbb{D}_4$ or $\mathbb{D}_6$, i.e. one of $\mathbb{Z}_2$, $\mathbb{Z}_3$, $\mathbb{Z}_4$, $\mathbb{Z}_6$, $\mathbb{D}_2$, $\mathbb{D}_3$, $\mathbb{D}_4$ and $\mathbb{D}_6$ by Proposition \ref{proposition-MMZ}.

If $G$ acts on a solid torus, then $G\cong \mathbb{Z}_m\times\mathbb{Z}_l$ or $\operatorname{dih}(\mathbb{Z}_m\times\mathbb{Z}_l)$ in the first paragraph of p.378 of \cite{KM}.

Suppose that a solid torus $\bar{V}$ is embedded in a genus two handlebody $V$ and $G$ acts on both $V$ and $\bar{V}$.

If we use Lemma 2.1 of \cite{KM}, Lemma 2.5 of \cite{KM} and the orientable parts of Table 2 in p.394 of \cite{KM}, then we get the left side of Table \ref{table-sub} if $G$ is abelian (so $G\cong\mathbb{Z}_2$, $\mathbb{Z}_3$, $\mathbb{Z}_4$, $\mathbb{Z}_6$ or $\mathbb{D}_2$) and the right side of Table \ref{table-sub} if $G$ is non-abelian ($G\cong\mathbb{D}_3$, $\mathbb{D}_4$ or $\mathbb{D}_6$.) Denote that $\mathbb{D}_2\cong \mathbb{Z}_2\times\mathbb{Z}_2$ and $\mathbb{Z}_2\cong \mathbb{D}_1$. See section 2 of \cite{KM} for more details.

\begin{table}
\begin{tabular}{ll}
$\bar{V}/G$ & $G$\\\hline
$V(A0,1)$: & $\mathbb{Z}_2$, $\mathbb{Z}_3$, $\mathbb{Z}_4$ and $\mathbb{Z}_6$. \\
$V(A0,2)$: & $\mathbb{Z}_2$, $\mathbb{Z}_4$ and $\mathbb{Z}_6$.\\
$V(A0,3)$: & $\mathbb{Z}_3$ and $\mathbb{Z}_6$.\\
$V(A0,4)$: & $\mathbb{Z}_4$.\\
$V(A0,6)$: & $\mathbb{Z}_6$.\\
$V(B0,1)$: & $\mathbb{Z}_2$ and $\mathbb{D}_2$.\\
$V(B0,2)$: & $\mathbb{D}_2$.
\end{tabular}
\begin{tabular}{ll}
$\bar{V}/G$ & $G$\\\hline
$V(B0,1)$: & $\mathbb{D}_3$, $\mathbb{D}_4$ and $\mathbb{D}_6$.\\
$V(B0,2)$: & $\mathbb{D}_4$ and $\mathbb{D}_6$.\\
$V(B0,3)$: & $\mathbb{D}_3$ and $\mathbb{D}_6$.\\
$V(B0,4)$: & $\mathbb{D}_4$.\\
$V(B0,6)$: & $\mathbb{D}_6$\\
&\\
&
\end{tabular}
\caption{$\bar{V}/G$ for abelian (left) and non-abelian cases (right).\label{table-sub}}
\end{table}

So we get Table \ref{table-1} by rearranging Table \ref{table-sub}, where Table \ref{table-1} exhibits possible quotient types of $\bar{V}/G$ for each isomorphism type of $G$.

\begin{table}
\begin{tabular}{ll}
$\mathbb{Z}_2$ : & $V(A0,1)$, $V(A0,2)$, $V(B0,1)$. \\
$\mathbb{Z}_3$ : & $V(A0,1)$, $V(A0,3)$. \\
$\mathbb{Z}_4$ : & $V(A0,1)$, $V(A0,2)$, $V(A0,4)$. \\
$\mathbb{Z}_6$ : & $V(A0,1)$, $V(A0,2)$, $V(A0,3)$, $V(A0,6)$.\\
$\mathbb{D}_2$ : & $V(B0,1)$, $V(B0,2)$.\\
$\mathbb{D}_3$ : & $V(B0,1)$, $V(B0,3)$.\\
$\mathbb{D}_4$ : & $V(B0,1)$, $V(B0,2)$, $V(B0,4)$.\\
$\mathbb{D}_6 $: & $V(B0,1)$, $V(B0,2)$, $V(B0,3)$, $V(B0,6)$.
\end{tabular}
\caption{Possible quotient types of $\bar{V}/G$. \label{table-1}}
\end{table}

From now, we will introduce some geometric lemmas which will be used in the proof of Theorem \ref{main-theorem}.

\begin{lemma}\label{lemma-gm-first}
Let $G$ be a finite cyclic group of non-trivial orientation preserving diffeomorphisms which acts on a genus two handlebody $V$. Suppose that $\mathcal{S}$ is a set of properly embedded surfaces in $V$ where $\mathcal{S}$ cuts $V$ into one or more components, where there is a solid torus component $V_1$ among them. If $G$ preserves $V_1$, then $\overline{V_1}/G$ is $V(A0,1)$ unless $G\cong\mathbb{Z}_2$.
\end{lemma}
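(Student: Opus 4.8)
The plan is to combine the list of admissible solid-torus quotients (Table~\ref{table-1}) with the global classification of genus two handlebody orbifolds (Figure~\ref{fig-ho}); the contradiction will be produced by a singular axis that would have to close up into an interior loop. First I would record the allowed quotient types. Since $G$ is cyclic and acts on a genus two handlebody, Proposition~\ref{proposition-MMZ} gives $G\in\{\mathbb{Z}_2,\mathbb{Z}_3,\mathbb{Z}_4,\mathbb{Z}_6\}$, and Lemma~\ref{lemma-KM} says $\overline{V_1}/G$ is one of the solid-torus orbifolds $V(A0,k)$ or $V(B0,k)$. Assuming $G\not\cong\mathbb{Z}_2$, so $G\in\{\mathbb{Z}_3,\mathbb{Z}_4,\mathbb{Z}_6\}$, Table~\ref{table-1} leaves only the types $V(A0,k)$ with $k$ a divisor of $|G|$; in particular no type $B$ occurs. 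It therefore suffices to rule out $V(A0,k)$ for every $k>1$.

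Next I would translate $V(A0,k)$ into a statement about the core of $V_1$. Reading the definition off Figure~\ref{figure-KM}, $\overline{V_1}/G$ is $V(A0,k)$ precisely when the core circle $C$ of the solid torus $V_1$ is a singular axis of order $k$; equivalently the subgroup $\mathbb{Z}_k\le G$ fixes $C$ pointwise while rotating the normal disks. Since $G$ is abelian it preserves $\mathrm{Fix}(\mathbb{Z}_k)$ and hence carries $C$ to itself, and because the quotient is of type $A$ the action on the core preserves its orientation, so $C/G$ is again a circle. As $C$ lies in the interior of $V_1\subset V$, its image is a \emph{closed} singular axis of order $k\ge 2$ lying in the interior of the handlebody orbifold $V/G$. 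Thus, if some $k>1$ occurred, $V/G$ would contain an interior closed singular loop.

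The heart of the argument is to show this is impossible for $G\in\{\mathbb{Z}_3,\mathbb{Z}_4,\mathbb{Z}_6\}$. Here I would appeal to the classification in Corollary~\ref{corollary-KJS} (Figure~\ref{fig-ho}) and observe that, for each of these groups, every singular axis of the resulting orbifold is a properly embedded \emph{arc} meeting the boundary, and none is a closed loop in the interior. The structural reason behind this inspection is Definition~\ref{definition-first}: a handlebody orbifold is assembled from the ball quotients of Figure~\ref{fig-z1}, whose singular sets are arcs through the centres, by $1$-handle orbifolds, so a closed singular loop can only be formed by routing a singular axis through at least one $1$-handle, which forces the underlying handlebody of the quotient by the subgroup $\mathbb{Z}_k$ to have positive genus. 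Combined with $\chi(V)=-1$ and Riemann--Hurwitz applied to the induced periodic map on $\partial V$, which forces a definite number of fixed arcs and hence pins down the genus of the quotient handlebody, this is incompatible with the presence of the loop. Either way one reaches a contradiction, so $k=1$ and $\overline{V_1}/G=V(A0,1)$. Finally I would note that $\mathbb{Z}_2$ is genuinely exceptional: Figure~\ref{fig-ho} does contain a $\mathbb{Z}_2$-orbifold with an interior singular circle, which is exactly what permits $V(A0,2)$ and $V(B0,1)$ and prevents the conclusion from holding in that case.

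The step I expect to be the main obstacle is the middle one: pinning down that $V(A0,k)$ with $k>1$ really yields a \emph{closed} interior singular axis of $V/G$ rather than an arc, and then matching this uniformly against Figure~\ref{fig-ho} over $\mathbb{Z}_3,\mathbb{Z}_4,\mathbb{Z}_6$, including the sub-involution cases $\mathbb{Z}_4\to V(A0,2)$ and $\mathbb{Z}_6\to V(A0,2),V(A0,3)$, where only a proper subgroup $\mathbb{Z}_k$, not all of $G$, fixes $C$ pointwise.
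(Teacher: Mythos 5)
Your proposal is correct and follows essentially the same route as the paper: restrict $G$ to $\mathbb{Z}_3$, $\mathbb{Z}_4$, $\mathbb{Z}_6$ via Proposition \ref{proposition-MMZ}, use Table \ref{table-1} to see $\overline{V_1}/G$ is some $V(A0,k)$, and note that $k\geq 2$ would force an isolated singular circle (the image of the core of $V_1$) in the interior of $V/G$, contradicting the classification of genus two handlebody orbifolds in Proposition \ref{corollary-KJS}. The extra Riemann--Hurwitz justification you sketch is unnecessary once one accepts the inspection of Figure \ref{fig-ho}, which is all the paper uses.
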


\begin{proof}
Since $G$ preserves $V_1$, $V/G$ can be written as
$$V/G=\overline{V_1}/G \cup [V-V_1]/G,$$
where $\overline{V_1}/G\cap [V-V_1]/G$ appears only on the boundary of each quotient.

In the case of $G\cong \mathbb{Z}_3$, $\mathbb{Z}_4$ or $\mathbb{Z}_6$, $V_1/G$ is $V(A0,k)$ for $k\geq 1$ by Table \ref{table-1}. But there is no isolated circle in the singular locus of $V/G$ unless $G\cong\mathbb{Z}_2$ by Proposition \ref{corollary-KJS}. So if $G$ is not isomorphic to $\mathbb{Z}_2$, then $\overline{V_1}/G$ must be $V(A0,1)$.
\end{proof}

\begin{lemma}\label{lemma-gm-1}
Let  $G$ be a finite group of non-trivial orientation preserving diffeomorphisms which acts on a genus two handlebody $V$. Suppose that $\mathcal{S}$ is a set of properly embedded surfaces in $V$ where $\mathcal{S}$ cuts $V$ into two solid tori $V_1$ and $V_2$. If $G$ preserves both $V_1$ and $V_2$, then $G\cong  \mathbb{Z}_2$.
\end{lemma}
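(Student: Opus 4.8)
The plan is to leverage the structural classification of genus-two handlebody orbifolds (Proposition~\ref{corollary-KJS}) together with the solid-torus quotient data in Table~\ref{table-1}. Since $G$ preserves both $V_1$ and $V_2$, the quotient decomposes as
$$V/G=\overline{V_1}/G\,\cup\,\overline{V_2}/G,$$
glued along the images of $\mathcal{S}$ in the boundary. The key observation is that each $\overline{V_i}/G$ is a quotient of a solid torus by the induced $G$-action, hence must be one of the types $V(A0,k)$ or $V(B0,k)$ listed in Lemma~\ref{lemma-KM} and Table~\ref{table-1}. First I would record, for each isomorphism type of $G$ from $\{\mathbb{Z}_2,\mathbb{Z}_3,\mathbb{Z}_4,\mathbb{Z}_6,\mathbb{D}_2,\mathbb{D}_3,\mathbb{D}_4,\mathbb{D}_6\}$, which solid-torus quotient types are admissible, and then cross-reference with the singular locus that appears in the genus-two handlebody orbifold $V/G$.

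The main step is a counting/compatibility argument on singular axes. If $G\not\cong\mathbb{Z}_2$, then by Table~\ref{table-1} each $\overline{V_i}/G$ has a core singular axis of order at least $2$ (types $V(A0,k)$ or $V(B0,k)$ with $k\geq 1$ carry a singular core arc, and for $G\neq\mathbb{Z}_2$ the relevant $k$ forces a genuine singular core). Gluing the two pieces along $\mathcal{S}$ produces in $V/G$ two separate singular axes arising from the two cores. I would then compare this configuration against the finite list in Figure~\ref{fig-ho}: the handlebody orbifolds from a genus two handlebody by an orientation-preserving $G$ simply do not contain two disjoint core-type singular axes of this form unless $G\cong\mathbb{Z}_2$. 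The earlier Lemma~\ref{lemma-gm-first} already handles the cyclic case by ruling out isolated singular circles; here the presence of \emph{two} solid-torus pieces forces the analogous obstruction even for non-abelian $G$, since each $\overline{V_i}/G$ of type $V(B0,k)$ contributes its own singular structure that cannot be accommodated simultaneously in the list of Figure~\ref{fig-ho}.

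Concretely, I would argue by elimination: suppose $G\cong\mathbb{Z}_3,\mathbb{Z}_4$, or $\mathbb{Z}_6$. Then Lemma~\ref{lemma-gm-first} applies to each $V_i$ and forces both $\overline{V_1}/G$ and $\overline{V_2}/G$ to be $V(A0,1)$; but $V(A0,1)$ for these groups requires the core axis to map to a singular locus that, when doubled across $\mathcal{S}$, is incompatible with any orbifold in Figure~\ref{fig-ho}. For the non-abelian cases $\mathbb{D}_3,\mathbb{D}_4,\mathbb{D}_6$ and for $\mathbb{D}_2$, I would use the right-hand column of Table~\ref{table-1}: each admissible $\overline{V_i}/G$ is of type $V(B0,k)$, whose singular locus meets the boundary in a prescribed pattern, and matching two such patterns along $\mathcal{S}$ again contradicts the enumeration in Figure~\ref{fig-ho}. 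The only type surviving for both pieces simultaneously is $V(B0,1)$ with $G\cong\mathbb{Z}_2$, which does embed consistently.

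The hard part will be making the gluing/compatibility step rigorous rather than merely enumerative: I must verify that the singular axes of $\overline{V_1}/G$ and $\overline{V_2}/G$ genuinely remain distinct after identification along $\mathcal{S}$ (i.e., they do not fuse into a single admissible axis), and that no orbifold in Figure~\ref{fig-ho} realizes the resulting union. This requires careful tracking of how the cores of the two solid tori sit relative to $\mathcal{S}$ and how their images interact in $V/G$; the cleanest route is to observe that Figure~\ref{fig-ho} contains no genus-two handlebody orbifold with two independent singular core axes except in the $\mathbb{Z}_2$ case, so any non-$\mathbb{Z}_2$ assumption yields an immediate contradiction with Proposition~\ref{corollary-KJS}.
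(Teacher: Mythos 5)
Your overall skeleton matches the paper's (decompose $V/G=\overline{V_1}/G\cup\overline{V_2}/G$, identify each solid-torus piece via Table \ref{table-1}, and compare the glued singular locus against Proposition \ref{corollary-KJS}), but your central counting step rests on a false premise. You claim that for $G\not\cong\mathbb{Z}_2$ each $\overline{V_i}/G$ carries ``a core singular axis of order at least $2$,'' and that even the type $V(A0,1)$ ``requires the core axis to map to a singular locus.'' This is backwards: $V(A0,1)$ is the solid-torus quotient with \emph{empty} singular locus (index $1$ means non-singular), and that is exactly the type that Lemma \ref{lemma-gm-first} forces for $G\cong\mathbb{Z}_3,\mathbb{Z}_4,\mathbb{Z}_6$. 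So in the cyclic case the ``two disjoint singular cores'' you want to rule out never arise, and your argument produces no contradiction. The actual obstruction, which is the one the paper draws, is the opposite one: with both pieces equal to $V(A0,1)$, the singular locus of $V/G$ consists of at most isolated circles and is in fact empty, and no orbifold in Figure \ref{fig-ho} has such a singular locus for these groups --- as a sanity check, an empty singular locus would mean $G$ acts freely on $V$, which is impossible since $|G|$ would have to divide $\chi(V)=-1$.

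The dihedral cases contain a second gap: you defer precisely the point where the work is needed (``matching two such patterns along $\mathcal{S}$ again contradicts the enumeration in Figure \ref{fig-ho}'') to an unverified assertion, as you yourself acknowledge. The paper fills this in two different ways. For $G\cong\mathbb{D}_2$ it makes the comparison concrete with a parity count: each admissible piece ($V(B0,1)$ or $V(B0,2)$) contributes an even number of valency-$3$ vertices to the singular locus, hence $V/G$ has an even number of such vertices, contradicting Proposition \ref{corollary-KJS}. For $G\cong\mathbb{D}_3,\mathbb{D}_4,\mathbb{D}_6$ it avoids pattern-matching altogether: such a $G$ contains an index-two cyclic subgroup $\bar{G}$ with $|\bar{G}|\geq 3$, and $\bar{G}$ still preserves both $V_1$ and $V_2$, so the cyclic-case contradiction applies to $\bar{G}$. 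This subgroup reduction is the idea missing from your proposal; without it (or the corrected cyclic argument it relies on), you would have to carry out the full enumerative comparison of glued $V(B0,k)$ patterns against Figure \ref{fig-ho}, which you have not done.
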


\begin{proof}
Since $G$ preserves both $V_1$ and $V_2$, $V/G$ can be written as
$$V/G=\overline{V_1}/G \cup \overline{V_2}/G,$$
where $\overline{V_1}/G\cap \overline{V_2}/G$ appears only on the boundary of each quotient.
In the case of $G\cong \mathbb{Z}_3$, $\mathbb{Z}_4$ or $\mathbb{Z}_6$, $\overline{V_i}/G$ is $V(A0,1)$ by Lemma \ref{lemma-gm-first}. So the singular locus of $V/G$ consists of at most two isolated circles, but this contradicts Proposition \ref{corollary-KJS}.

In the case of $G\cong \mathbb{D}_2$, the singular locus of each $\overline{V_i}/G$ has even vertices of valency $3$ by Table \ref{table-1}, so the singular locus of $V/G$ also has even vertices of valency $3$. But this contradicts Proposition \ref{corollary-KJS}. In the case of $\mathbb{D}_3$, $\mathbb{D}_4$ or $\mathbb{D}_6$, there must be an index 2 cyclic subgroup $\bar{G}$ of $G$. But $|\bar{G}|\geq3$ and $\bar{G}$ also preserves $V_i$ for $i=1,2$, we get a contradiction using the same arguments in the case of $G\cong \mathbb{Z}_3$, $\mathbb{Z}_4$ or $\mathbb{Z}_6$.
\end{proof}

\begin{lemma}\label{lemma-gm-12}
Let  $G$ be a finite group of non-trivial orientation preserving
diffeomorphisms which acts on a genus two handlebody $V$. Suppose that $\mathcal{S}$ is a set of properly embedded surfaces in $V$ where $\mathcal{S}$ cuts $V$ into two solid tori $V_1$, $V_3$ and a $3$-ball $V_2$. If $G$ preserves each $V_i$ for $i=1,2,3$, or $G$ preserves $V_2$ but exchanges $V_1$ and $V_3$, then $G\cong\mathbb{Z}_2$ or $\mathbb{D}_2$.
\end{lemma}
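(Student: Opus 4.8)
The plan is to begin by pinning down the topology. Since $\mathcal{S}$ cuts the genus two handlebody $V$ into two solid tori $V_1,V_3$ and a $3$-ball $V_2$, an Euler characteristic count ($-1=0+1+0-\chi(V_1\cap V_2)-\chi(V_3\cap V_2)$) forces $V_1\cap V_2$ and $V_3\cap V_2$ to be single disks, so $V$ is the boundary connected sum $V_1\,\natural\,V_3$ with $V_2$ the connecting $1$-handle and $\mathcal{S}$ its two attaching disks. The observation driving everything is that the cores $c_1\subset V_1$ and $c_3\subset V_3$ may be taken in the interiors, disjoint from $\mathcal{S}$ and from $V_2$; hence in any quotient their images are disjoint from $\overline{V_2}/G$ and from $\overline{\mathcal{S}}/G$, and cannot be linked to other singular strata through the connecting ball.

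For the first alternative, where $G$ preserves each $V_i$, I would run the argument of Lemma \ref{lemma-gm-1} almost verbatim. If $G$ is cyclic of order $\ge 3$, Lemma \ref{lemma-gm-first} gives $\overline{V_1}/G=\overline{V_3}/G=V(A0,1)$, so each core maps to a singular circle; since $c_1,c_3$ avoid $\mathcal{S}$, the piece $\overline{V_2}/G$ cannot join them to anything, so $V/G$ carries two isolated singular circles, contradicting Proposition \ref{corollary-KJS}. For $G\cong\mathbb{D}_3,\mathbb{D}_4,\mathbb{D}_6$ I pass to the index two cyclic subgroup $\bar G\cong\mathbb{Z}_3,\mathbb{Z}_4,\mathbb{Z}_6$, which still preserves each $V_i$, and repeat the contradiction. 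This leaves only $G\cong\mathbb{Z}_2$ or $\mathbb{D}_2$, neither of which is excluded (for $\mathbb{D}_2$ the valence three vertices of $\overline{V_1}/G$, $\overline{V_3}/G$ from Table \ref{table-1} can be reconciled with Figure \ref{fig-ho} once the ball contributes an odd number of them).

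For the second alternative, where $G$ preserves $V_2$ and exchanges $V_1,V_3$, let $G_0=\mathrm{Stab}(V_1)$. As $G$ acts transitively on $\{V_1,V_3\}$, the subgroup $G_0$ has index two and preserves each $V_i$, so the first alternative applies to $G_0$ and yields $G_0\cong 1,\ \mathbb{Z}_2$ or $\mathbb{D}_2$; hence $|G|\le 8$ and $G\in\{\mathbb{Z}_2,\mathbb{Z}_4,\mathbb{D}_2,\mathbb{D}_4\}$. I would then reduce $\mathbb{D}_4$ to $\mathbb{Z}_4$: if $G\cong\mathbb{D}_4$ then $G_0\cong\mathbb{D}_2$, so the cyclic subgroup $\langle r\rangle\cong\mathbb{Z}_4$ is not contained in $G_0$; thus $r$ exchanges $V_1$ and $V_3$ while $r^{2}\in G_0$ preserves each, and any contradiction for an exchanging $\mathbb{Z}_4$ transfers to $\mathbb{D}_4$. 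It therefore suffices to exclude an exchanging $\mathbb{Z}_4=\langle g\rangle$, leaving $G\cong\mathbb{Z}_2$ or $\mathbb{D}_2$.

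Excluding the exchanging $\mathbb{Z}_4$ is the heart of the matter. Now $\{c_1,c_3\}$ is a single $g$-orbit with stabilizer $\langle g^{2}\rangle$. If $g^{2}$ fixes $c_1$ pointwise, then its image $\bar c$ is an isolated singular circle of order two in $V/G$, which is again impossible by Proposition \ref{corollary-KJS} since $G\not\cong\mathbb{Z}_2$. The delicate subcase is when $g^{2}$ acts freely on the cores: then $\bar c$ is non-singular and the isolated-circle obstruction vanishes. Here I would examine $\mathrm{Fix}(g^{2})$, which meets the ball $V_2$ in an unknotted arc and misses the cores, pass to $W=V/\langle g^{2}\rangle$ together with the induced involution $\bar g$ exchanging the two solid-torus pieces, and test the orbifold $V/\mathbb{Z}_4=W/\langle\bar g\rangle$ against the $\mathbb{Z}_4$ entries of Figure \ref{fig-ho}; exchanging the two handles collapses the underlying space to genus $\le 1$ and leaves a purely cyclic (no valence three vertices) singular graph, and I expect this to be incompatible with every listed genus two $\mathbb{Z}_4$-orbifold. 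I anticipate this final step — showing that a cyclic group cannot exchange the two handles in the free subcase — to be the main obstacle, precisely because the singular-locus bookkeeping that settles every other case is no longer decisive and one must invoke the global classification of Proposition \ref{corollary-KJS}.
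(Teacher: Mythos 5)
Your group-theoretic scaffolding is sound: the reduction of the second alternative via $G_0=\operatorname{Stab}(V_1)$, and the observation that a $\mathbb{D}_4$ with $G_0\cong\mathbb{D}_2$ contains an exchanging $\langle r\rangle\cong\mathbb{Z}_4$, are both correct, and arguably cleaner than the paper's direct case-by-case treatment of $\mathbb{D}_3$, $\mathbb{D}_4$, $\mathbb{D}_6$ by vertex-parity counts. But the topological core of your argument has two genuine gaps. First, you have misread the notation $V(A0,1)$: in this paper (see the proofs of Lemma \ref{lemma-gm-first} and of Lemma \ref{lemma-gm-12} itself), $V(A0,k)$ has a singular core circle of index $k$, so $V(A0,1)$ is precisely the quotient with \emph{empty} singular locus, i.e.\ the case where the cyclic group acts \emph{freely} on the solid torus. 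Thus when Lemma \ref{lemma-gm-first} forces $\overline{V_1}/G=\overline{V_3}/G=V(A0,1)$, the cores do \emph{not} map to singular circles; your ``two isolated singular circles'' do not exist, and your first-alternative contradiction evaporates exactly in the case that survives. The contradiction must come from the piece you never use: the ball $V_2$. A cyclic $G$ of order at least $3$ acting on a $3$-ball has an arc of fixed points (Figure \ref{fig-z1}), so the singular locus of $V/G$ would be a single arc of index at least $3$, which does not occur among the $\mathbb{Z}_3$, $\mathbb{Z}_4$, $\mathbb{Z}_6$ quotients of a genus two handlebody in Figure \ref{fig-ho}; that is the paper's argument, and it is the step your proposal is missing.

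Second, your exclusion of the exchanging $\mathbb{Z}_4=\langle g\rangle$ is incomplete in two ways. The dichotomy ``$g^2$ fixes $c_1$ pointwise / $g^2$ acts freely on the cores'' omits a third possibility allowed by Table \ref{table-1}: $\overline{V_1}/\langle g^2\rangle = V(B0,1)$, where $g^2$ preserves $c_1$ but acts on it as a reflection with two fixed points, $\operatorname{Fix}(g^2)\cap V_1$ being two arcs. And the subcase you flag as ``the main obstacle'' (free $g^2$) is left unproven, with a sketch that heads the wrong way: the relevant comparison is not with ``genus two $\mathbb{Z}_4$-orbifolds'' but with the $\mathbb{Z}_4$-quotients of genus two handlebodies listed in Figure \ref{fig-ho}, which are $3$-balls with exactly two singular arcs, of indices $4$ and $2$. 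Once stated this way, all three subcases fall to the same bookkeeping the paper uses: if $g^2$ is free on $V_1$ (type $V(A0,1)$), the singular locus of $V/G$ is the single index-$4$ arc coming from $\overline{V_2}/G$ --- too few arcs; if $g^2$ fixes $c_1$ pointwise (type $V(A0,2)$), there is an isolated singular circle, forbidden by Proposition \ref{corollary-KJS} unless $G\cong\mathbb{Z}_2$; if $g^2$ is of type $V(B0,1)$, one gets three arcs (two of index $2$ plus the index-$4$ arc, which cannot join them since $\partial(\overline{V_1}/\langle g^2\rangle)$ contains no point of index $4$) --- too many. So your ``delicate subcase'' is not delicate at all, and no passage to $W=V/\langle g^2\rangle$ or geometric rotation argument is needed; the paper reserves that style of argument for the annulus analogue, Lemma \ref{lemma-gm-an-2}. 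A minor further point: your Euler-characteristic claim that $\mathcal{S}$ must consist of two disks is not forced ($\chi(\mathcal{S})=2$ also permits additional annuli), but neither proof actually depends on it.
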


\begin{proof}
Suppose that $G$ preserves $V_i$ for $i=1,2,3$. Then $V/G$ can be written as
$$V/G=\overline{V_1}/G \cup \overline{V_2}/G \cup \overline{V_3}/G,$$
where $\overline{V_i}/G\cap \overline{V_j}/G$ appears only on the boundary of each quotient ($i\neq j$ and $1\leq i,j\leq 3$.)

In the case of $G\cong \mathbb{Z}_3$, $\mathbb{Z}_4$ or $\mathbb{Z}_6$,
the singular locus of each $\overline{V_i}/G$ is $V(A0,1)$ for $i=1,3$ by Lemma \ref{lemma-gm-first}, and the singular locus of $\overline{V_2}/G$ is an arc of index larger than $2$ by Figure \ref{fig-z1}. So the singular locus of $V/G$ consists of only one arc, this contradicts Proposition \ref{corollary-KJS}.

In the case of $G\cong \mathbb{D}_3$, $\mathbb{D}_4$ or $\mathbb{D}_6$, there must be an index 2 cyclic subgroup $\bar{G}$ of $G$. But $|\bar{G}|\geq3$ and $\bar{G}$ also preserves $V_i$ for $i=1,2$, we get a contradiction using the same arguments in the case of $G\cong \mathbb{Z}_3$, $\mathbb{Z}_4$ or $\mathbb{Z}_6$.

Suppose that $G$ preserves $V_2$ but exchanges $V_1$ and $V_3$. Then $V/G$ can be written as
$$V/G=\overline{V_1}/G' \cup \overline{V_2}/G,$$
where $G'$ is an index two subgroup of $G$ which preserves both $V_1$ and $V_3$, and $\overline{V_1}/G'\cap \overline{V_2}/G$ appears only on the boundary of each quotient.

In the case of $G\cong \mathbb{Z}_3$, the index two subgroup $G'$ cannot exist. So we get a contradiction. In the case of $G\cong \mathbb{Z}_4$, we get $G'\cong \mathbb{Z}_2$. So $V_2/G$ is a $3$-ball with an arc of index $4$ by  Figure \ref{fig-z1}, and $\overline{V_1}/G'$ is $V(A0,1)$, $V(A0,2)$ or $V(B0,1)$ by Table \ref{table-1}. The first case contradicts Proposition \ref{corollary-KJS} since the singular locus of $V/G$ consists of an arc. The second case contradicts Proposition \ref{corollary-KJS} from the existence of an isolated circle in the singular locus of $V/G$. The third case also contradicts Proposition \ref{corollary-KJS} from the existence of three arcs in the singular locus in $V/G$ (the arc of index $4$ in $\overline{V_2}/G$ cannot be extended to $\overline{V_1}/G'\cap \overline{V_2}/G$ from the nonexistence of a point of index $4$ in the boundary of $\overline{V_1}/G'$.) In the case of $G\cong \mathbb{Z}_6$, the singular locus of $\overline{V_2}/G$ is an arc of index $6$ by Figure \ref{fig-z1}, i.e. the singular locus of $V/G$ has an edge of index $6$. But this contradicts Proposition \ref{corollary-KJS}.

In the case of $G\cong \mathbb{D}_3$, we get $G'\cong \mathbb{Z}_3$. So $\overline{V_1}/G'$ is $V(A0,1)$ by Lemma \ref{lemma-gm-first} ($G'$ preserves $V_i$ for all $i$) and the singular locus of $\overline{V_2}/G$ is a trivalent graph with one vertex of valency $3$ by Figure \ref{fig-z1}. So the singular locus of $V/G$ has only one vertex of valency $3$, but this contradicts Proposition \ref{corollary-KJS}. In the case of $G\cong\mathbb{D}_4$, we get $G'\cong \mathbb{Z}_4$ or $\mathbb{D}_2$. In the former case, $\overline{V_1}/G'$ is $V(A0,1)$ by Lemma \ref{lemma-gm-first} ($G'$ preserves $V_i$ for all $i$.) In the latter case, $\overline{V_1}/G'$ is $V(B0,1)$ or $V(B0,2)$ by Table \ref{table-1}. In any case, the singular locus of $\overline{V_1}/G'$ has even vertices of valency $3$ and that of $\overline{V_2}/G$ is a trivalent graph with one vertex of valency $3$ by Figure \ref{fig-z1}, so that of $V/G$ has odd vertices of valency $3$. But this contradicts Proposition \ref{corollary-KJS}. In the case of $G\cong\mathbb{D}_6$, we get a contradiction similarly as the case $G\cong \mathbb{D}_4$.
\end{proof}

\begin{lemma}\label{lemma-gm-2}
Let  $G$ be a finite group of non-trivial orientation preserving diffeomorphisms which acts on a genus two handlebody $V$. Suppose that $\mathcal{S}$ is a set of properly embedded surfaces in $V$ where $\mathcal{S}$ cuts $V$ into a $3$-ball $V_1$ and a solid torus $V_2$. If $G$ preserves both $V_1$ and $V_2$, then $G\cong  \mathbb{Z}_2$ or $\mathbb{D}_2$.
\end{lemma}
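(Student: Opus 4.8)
The plan is to reuse the decomposition-and-counting strategy already developed in Lemmas~\ref{lemma-gm-1} and \ref{lemma-gm-12}. Since $G$ preserves both $V_1$ and $V_2$, the quotient splits as
$$V/G=\overline{V_1}/G\cup\overline{V_2}/G,$$
where $\overline{V_1}/G\cap\overline{V_2}/G$ appears only on the boundary of each quotient. Because $V$ is a genus two handlebody, $V/G$ must be one of the handlebody orbifolds of Figure~\ref{fig-ho} by Proposition~\ref{corollary-KJS}, so the goal is to show that, for every $G$ other than $\mathbb{Z}_2$ and $\mathbb{D}_2$, the singular locus produced by gluing the two pieces cannot match any entry on that list. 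As $G$ is a non-trivial subgroup of $\mathbb{D}_4$ or $\mathbb{D}_6$, it suffices to eliminate $\mathbb{Z}_3,\mathbb{Z}_4,\mathbb{Z}_6,\mathbb{D}_3,\mathbb{D}_4,\mathbb{D}_6$.

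First I would treat the cyclic cases $G\cong\mathbb{Z}_3,\mathbb{Z}_4,\mathbb{Z}_6$. Since $G$ preserves the solid torus $V_2$ and is not isomorphic to $\mathbb{Z}_2$, Lemma~\ref{lemma-gm-first} forces $\overline{V_2}/G$ to be $V(A0,1)$, which contains no isolated circle and no interior singular arc. On the other hand $V_1$ is a $3$-ball, so $\overline{V_1}/G$ is the quotient of a ball by a cyclic rotation, which by Figure~\ref{fig-z1} is a ball carrying a single arc of index $n>2$. Gluing the two pieces therefore leaves the singular locus of $V/G$ consisting of exactly one arc of index larger than $2$ --- precisely the configuration shown to be impossible in Lemma~\ref{lemma-gm-12} --- contradicting Proposition~\ref{corollary-KJS}.

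Next I would dispose of the dihedral cases $G\cong\mathbb{D}_3,\mathbb{D}_4,\mathbb{D}_6$ by passing to the index two cyclic subgroup $\bar{G}\cong\mathbb{Z}_3,\mathbb{Z}_4,\mathbb{Z}_6$, exactly as in Lemmas~\ref{lemma-gm-1} and \ref{lemma-gm-12}. Since $\bar{G}\leq G$ still preserves both $V_1$ and $V_2$ and has order at least $3$, the argument of the previous paragraph applies verbatim to $V/\bar{G}$ and again yields a contradiction with Proposition~\ref{corollary-KJS}. Eliminating all six possibilities leaves only $G\cong\mathbb{Z}_2$ or $G\cong\mathbb{D}_2$, which is the assertion; note that $\mathbb{D}_2$ is not ruled out because its only cyclic subgroups have order at most $2$, so the reduction above does not apply to it.

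The one point I expect to require care is verifying that the glued singular locus really is a bare single arc, i.e. that the endpoints of the arc coming from $\overline{V_1}/G$ acquire no additional singular edges along the interface $\overline{V_1}/G\cap\overline{V_2}/G$ and that $V(A0,1)$ contributes nothing of its own. Since this is exactly the interface bookkeeping already carried out for the single-arc case of Lemma~\ref{lemma-gm-12}, I would invoke that case directly rather than re-deriving it, leaving only the routine matching of the resulting single-arc locus against the list in Figure~\ref{fig-ho}.
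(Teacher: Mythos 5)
Your proof is correct, and for the cyclic cases it is exactly the paper's argument: $\overline{V_2}/G$ is $V(A0,1)$ by Lemma \ref{lemma-gm-first}, the ball quotient $\overline{V_1}/G$ contributes a single arc of index $k\geq 3$ by Figure \ref{fig-z1}, and a singular locus consisting of one such arc matches no orbifold in Figure \ref{fig-ho}, contradicting Proposition \ref{corollary-KJS}. Where you genuinely diverge is the dihedral cases $\mathbb{D}_3$, $\mathbb{D}_4$, $\mathbb{D}_6$: you pass to the index two cyclic subgroup $\bar{G}\cong\mathbb{Z}_3,\mathbb{Z}_4,\mathbb{Z}_6$, which still preserves both $V_1$ and $V_2$, and rerun the cyclic argument on $V/\bar{G}$ --- the same reduction the paper itself employs in Lemmas \ref{lemma-gm-1} and \ref{lemma-gm-12}, but not here. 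The paper instead argues directly on $V/G$ by a parity count: the quotient of the $3$-ball $V_1$ by a dihedral group has exactly one vertex of valency $3$ in its singular locus (Figure \ref{fig-z1}), while the solid torus quotient $\overline{V_2}/G$, being of type $V(B0,k)$ by Table \ref{table-1}, contributes an even number of such vertices, so $V/G$ has an odd number of valency-$3$ vertices, which no orbifold of Figure \ref{fig-ho} possesses. Both routes are sound; yours buys uniformity (one mechanism kills all six groups, and you correctly note why it cannot touch $\mathbb{D}_2$, whose cyclic subgroups have order at most $2$), while the paper's count avoids any appeal to subgroup structure. One small tidying remark: your citation of Lemma \ref{lemma-gm-12} for the ``single arc is impossible'' step is really just an appeal to Proposition \ref{corollary-KJS}, which is what that lemma itself invokes at the corresponding point, so it is cleaner to cite the proposition directly.
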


\begin{proof}
Since $G$ preserves both $V_1$ and $V_2$, $V/G$ can be written as
$$V/G=\overline{V_1}/G \cup \overline{V_2}/G,$$
where $\overline{V_1}/G\cap \overline{V_2}/G$ appears only on the boundary of each quotient.\\
In the case of $G\cong \mathbb{Z}_3$, $\mathbb{Z}_4$ or $\mathbb{Z}_6$, $\overline{V_2}/G$ is $V(A0,1)$ by Lemma \ref{lemma-gm-first}. Moreover, the singular locus of $\overline{V_1}/G$ is an arc of index $k\geq 3$ by Figure \ref{fig-z1}. So the singular locus of $V/G$ consists of an arc, but this contradicts Proposition \ref{corollary-KJS}.

In the case of $G\cong \mathbb{D}_3$, $\mathbb{D}_4$ or $\mathbb{D}_6$, the singular locus of $V/G$ has odd vertices of valency $3$, (one is from $\overline{V_1}/G$ and the others are from $\overline{V_2}/G$ by Figure \ref{fig-z1} and Table \ref{table-1}) for all possible cases. But this contradicts Proposition \ref{corollary-KJS}.
\end{proof}

\begin{figure}
\includegraphics[viewport=26 643 541 780, width=11cm]{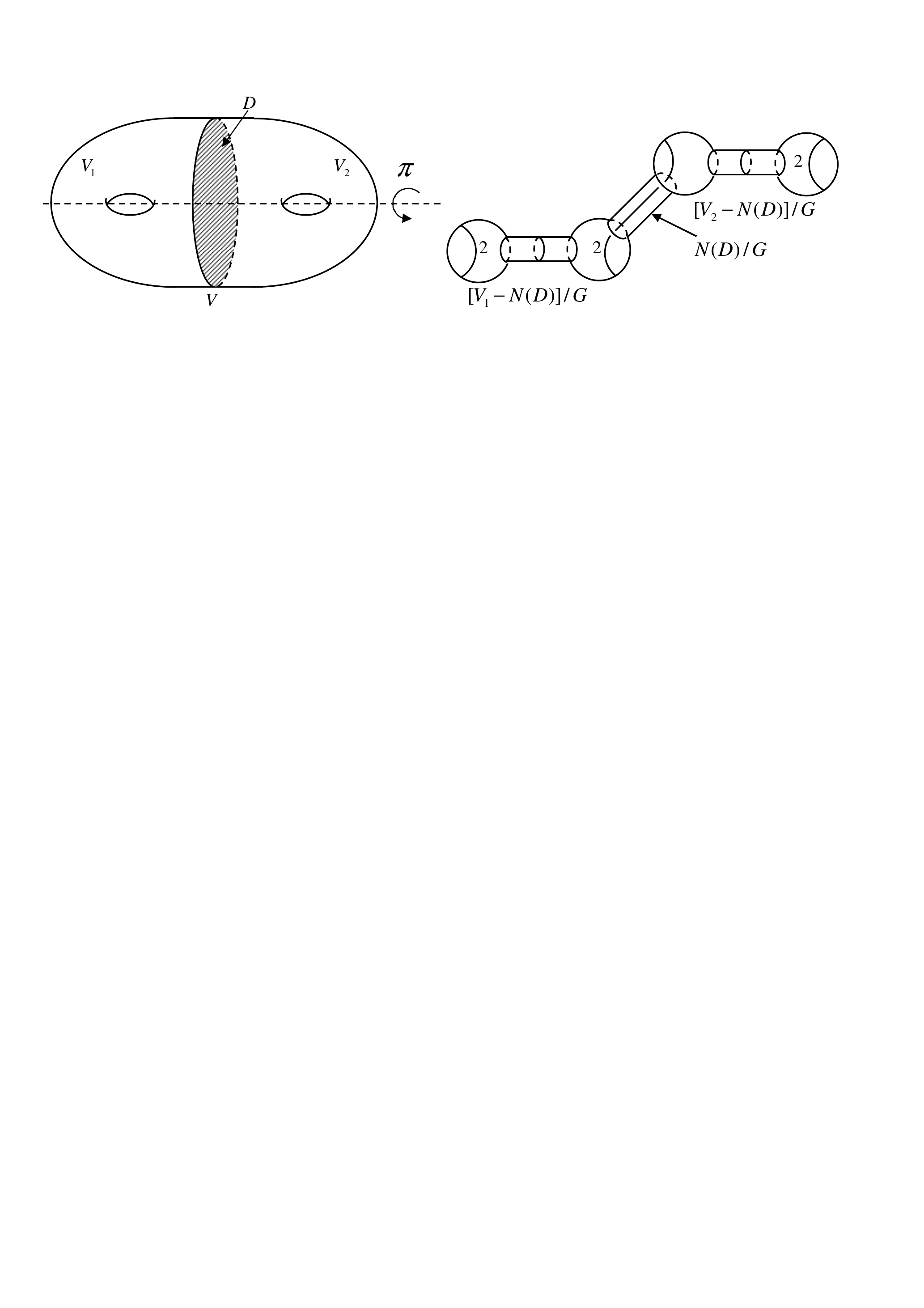}\\
\includegraphics[viewport=30 583 537 780, width=11cm]{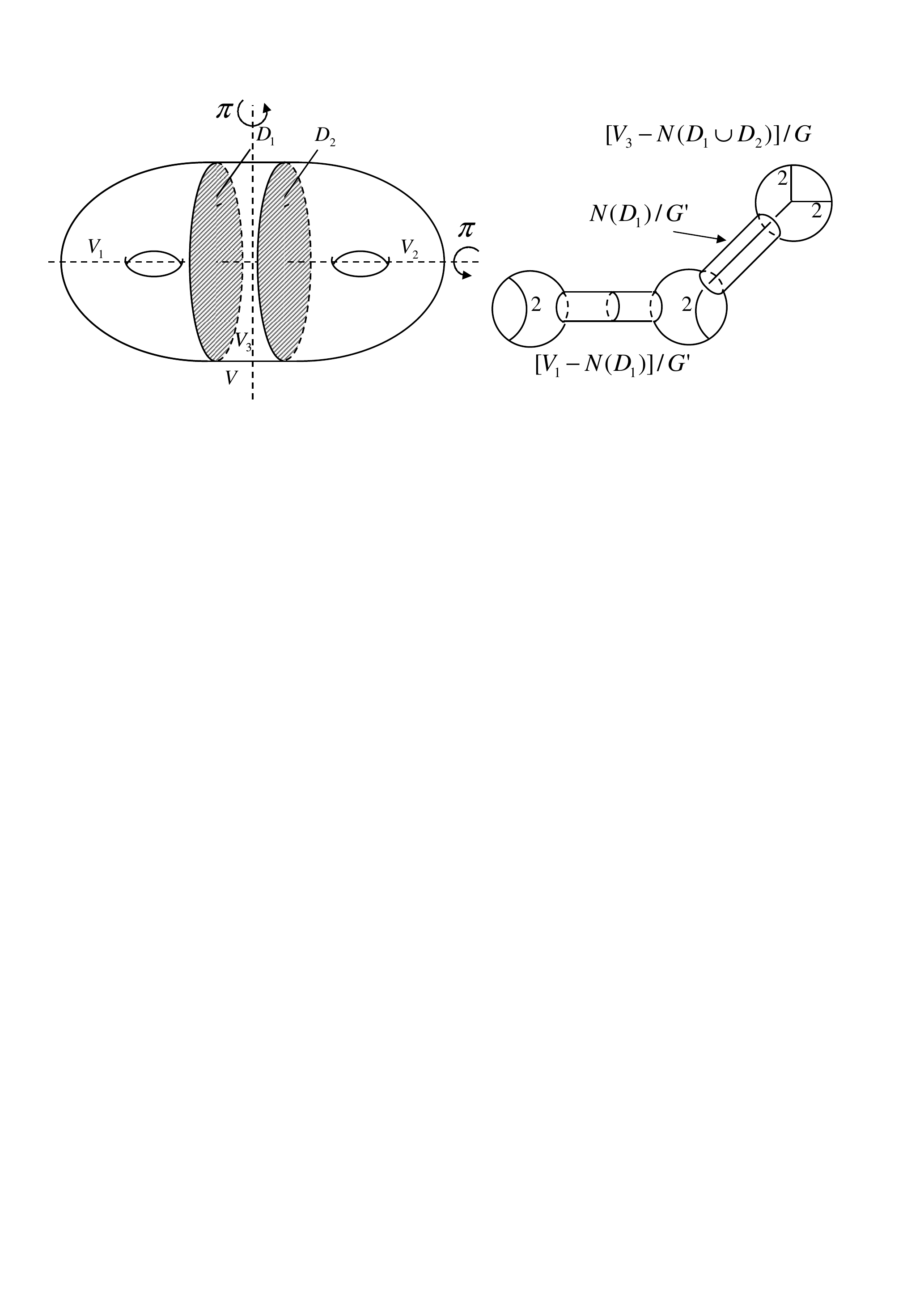}\\
\includegraphics[viewport=30 572 561 780, width=11cm]{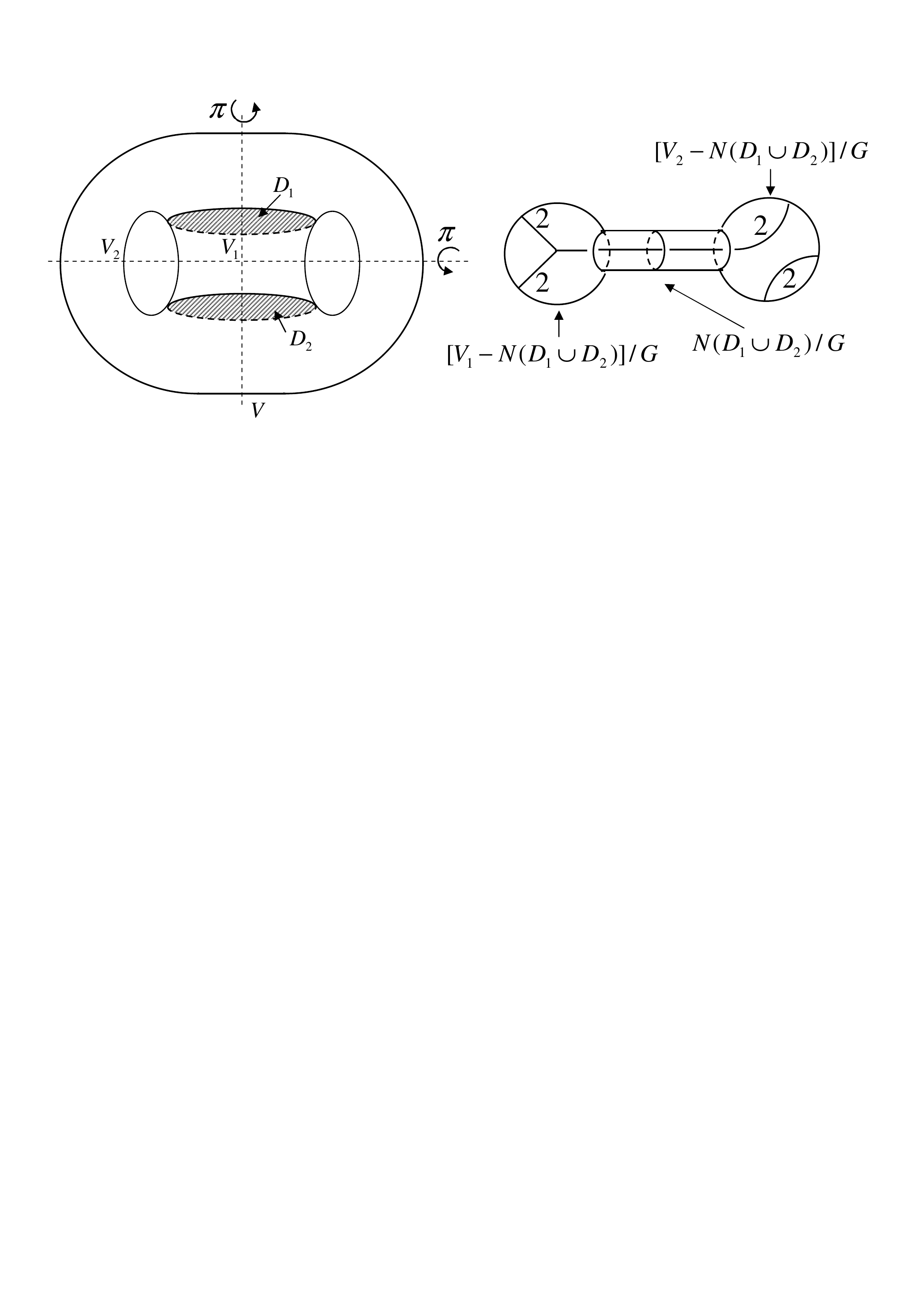}
\caption{Easy examples of Lemma \ref{lemma-gm-1}, \ref{lemma-gm-12} and \ref{lemma-gm-2}.}
\end{figure}

Let $D$ be a disk properly embedded in a handlebody $V$. $D$ is a \emph{meridian disk} of $V$ if $D$ does not separate $V$ (use the definition in p.439 of \cite{KO}.)

\begin{lemma}\label{lemma-gm-additional}
Suppose that $G$ is a finite cyclic group of non-trivial orientation preserving diffeomorphisms which acts on a genus two handlebody $V$, where $|G|\geq 3$. If $D$ is a meridian disk of $V$ where $g(D)\cap D =\emptyset$ or $D$ for all $g\in G$, then $G$ does not preserve $D$ and $G(D)$ cuts $V$ into 3-balls.
\end{lemma}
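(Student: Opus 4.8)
The plan is to prove the two assertions separately, using the classification of quotient orbifolds (Proposition \ref{corollary-KJS}, Table \ref{table-1}, Lemma \ref{lemma-gm-first}) together with two elementary facts about disks: a finite-order diffeomorphism of a disk has a fixed point, and an orientation-reversing finite-order diffeomorphism of a disk is a reflection, hence has order $2$.

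For the first assertion I would argue by contradiction: suppose $G$ preserves $D$, and let $g$ generate $G$. Each element of $G$ either preserves or interchanges the two sides of $D$, and since $g$ is orientation-preserving on $V$, the restriction $g|_D$ preserves the orientation of $D$ in the first case and reverses it in the second. If $g$ (hence all of $G$) preserves the sides, then $g|_D$ is an orientation-preserving finite-order diffeomorphism, so it is conjugate to a rotation and fixes a point $p\in\operatorname{int}D\subset\operatorname{int}V$ that is then fixed by all of $G$. Cutting $V$ along $D$ produces a solid torus $W$ on which $G$ acts, and $p$ yields two fixed points $p^{\pm}\in\partial W$. But $G$ is cyclic of order $\ge 3$ acting on the solid torus $W$, so $\overline{W}/G=V(A0,k)$ by Table \ref{table-1}, and in this model $G$ acts on $\partial W$ without fixed points, a contradiction. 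If instead $g$ interchanges the sides, then $g|_D$ is an orientation-reversing finite-order diffeomorphism of a disk, hence a reflection, so $g^{2}|_D=\operatorname{id}$; since $g^{2}$ preserves the orientation of $V$ and fixes the hypersurface $D$ pointwise it is the identity on $V$, forcing $|G|=\operatorname{ord}(g)\le 2$ and contradicting $|G|\ge 3$. Hence $G$ does not preserve $D$.

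For the second assertion, write $G(D)=D_1\cup\dots\cup D_r$ for the pairwise disjoint images of $D$; these form a single $G$-orbit, each $D_i$ is a meridian disk, and $r\ge 2$ because $G$ does not preserve $D$. Let $W_1,\dots,W_s$ be the closures of the components of $V\setminus G(D)$; these are handlebodies which $G$ permutes, and pieces in one orbit are homeomorphic, hence of equal genus. I form the dual graph $\Gamma$ with one vertex per $W_j$ and one edge per $D_i$; then $\Gamma$ is connected, and since every $D_i$ is non-separating in $V$, no edge of $\Gamma$ is a bridge. A disk count (via Euler characteristic) gives $\sum_j\operatorname{genus}(W_j)+b_1(\Gamma)=\operatorname{genus}(V)=2$, where $b_1(\Gamma)=r-s+1$. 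It therefore suffices to prove $\sum_j\operatorname{genus}(W_j)=0$, since then every $W_j$ is a $3$-ball.

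The hard part will be excluding positive genus in the pieces. If $b_1(\Gamma)=0$ then $\Gamma$ is a tree and, as $r\ge 2$, has a bridge, which is impossible; so $b_1(\Gamma)\in\{1,2\}$ and $\sum_j\operatorname{genus}(W_j)\in\{1,0\}$. The remaining task is to rule out $\sum_j\operatorname{genus}(W_j)=1$. In that case exactly one piece $W_0$ has genus $1$ (a solid torus) and the rest are balls; being the unique genus-$1$ piece, $W_0$ is $G$-invariant, and since a bridgeless connected graph with $b_1=1$ is a single cycle, $W_0$ has exactly two incident disks. By Lemma \ref{lemma-gm-first}, $\overline{W_0}/G=V(A0,1)$ because $|G|\ge 3$, so $G$ acts freely on $W_0$. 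Then no nontrivial element of $G$ can fix either incident disk setwise, for such an element would have a fixed point on that disk lying in $W_0$; thus $G$ acts freely on the two-element set of incident disks, forcing $|G|\le 2$, a contradiction. Therefore $\sum_j\operatorname{genus}(W_j)=0$, every $W_j$ is a $3$-ball, and $G(D)$ cuts $V$ into $3$-balls.
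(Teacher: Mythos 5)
Your proof is correct, but it takes a genuinely different route from the paper's in both halves. For the first assertion, the paper forms the quotient directly: it writes $V/G=\overline{N(D)}/G\cup[V-N(D)]/G$, observes that $\overline{N(D)}/G$ is a $1$-handle with core of index $|G|$ and that $[V-N(D)]/G$ is $V(A0,k)$, and then contradicts the genus-two classification (Proposition \ref{corollary-KJS}), since the singular locus would be one arc plus at most one isolated circle. You instead argue through fixed points: a side-preserving generator fixes a point of $\operatorname{int}(D)$, which becomes a fixed point on the boundary of the cut-open solid torus $W$, contradicting the fact that cyclic quotients of order $\geq 3$ are of type $V(A0,k)$, whose singular locus misses the boundary, so the action on $\partial W$ is free; a side-swapping generator has square fixing $D$ pointwise, hence order at most $2$. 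Your side-swapping case makes explicit something the paper glosses over: its claim that $\overline{N(D)}/G$ is a $1$-handle with core of index $|G|$ tacitly assumes the two ends of $N(D)$ are not exchanged. For the second assertion, the paper splits on whether all disks of $G(D)$ are parallel: if two are non-parallel it invokes the genus-two fact that two disjoint non-parallel non-separating disks form a complete meridian system, and in the all-parallel case it builds $V/G$ and contradicts Proposition \ref{corollary-KJS} because the base space could not be a ball. Your dual-graph Euler-characteristic count reaches the same critical configuration (one solid torus plus balls arranged in a cycle) with no parallelism analysis at all, and your endgame --- $V(A0,1)$ from Lemma \ref{lemma-gm-first} forces a free action on $W_0$, so $G$ acts faithfully on the two-element set of incident disks and $|G|\leq 2$ --- is more elementary than the paper's base-space argument and generalizes beyond genus two. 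Both proofs ultimately lean on the same classification input (Table \ref{table-1} and Lemma \ref{lemma-gm-first}) for the solid torus piece.

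One point of bookkeeping you should fix: the identity $\sum_j\operatorname{genus}(W_j)+b_1(\Gamma)=2$ holds for the pieces obtained by \emph{cutting} $V$ along $G(D)$ (the path-metric completions of the components of $V\setminus G(D)$), not for literal closures in $V$. If some $D_i$ had both of its sides on one component (a loop of $\Gamma$), the closure would have strictly larger genus than the cut piece and your identity would fail; for instance, two non-parallel disks with connected complement give one cut piece that is a ball, while the closure is all of $V$. This costs nothing in substance: the lemma's conclusion concerns cut pieces anyway, and in your critical case ($b_1(\Gamma)=1$ with $r\geq 2$) the cycle graph has no loops, so closures and cut pieces coincide; but the argument should be phrased for cut pieces throughout.
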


\begin{proof}
Suppose that $G$ preserves $D$. Then $G$ preserves an open regular neighborhood of $D$, say $N(D)$, in $V$. Assume that $\overline{N(D)}\cong D\times I$. So we get
$$V/G=\overline{N(D)}/G\cup [V-N(D)]/G,$$
where $\overline{N(D)}/G\cap[V-N(D)]/G = [D\times\{0,1\}]/G$.
Since $G$ is a cyclic group, $\overline{N(D)}/G$ is a $1$-handle with a core of index $|G|$ by Figure \ref{fig-z1}. Since $V-N(D)$ is a solid torus, $[V-N(D)]/G$ is $V(A0,k)$ with $k\geq 1$ by Table \ref{table-1}. So the singular locus of $V/G$ consists of an arc and at most one isolated circle. But this contradicts Proposition \ref{corollary-KJS}.

So $G$ does not preserve $D$.
Then all disks of $G(D)$ are properly embedded pairwise disjoint disks in $V$. Since $D$ is non-separating in $V$, there exist an essential simple closed curve $\alpha$ in $V$ which meets $D$ exactly once and non-tangentially. So $g(\alpha)$ also meets $g(D)$ exactly once and non-tangentially for all $g\in G$. So all disks of $G(D)$ are non-separating.

If at least two disks of $G(D)$ are non-parallel in $V$, then $G(D)$ cuts $V$ into $3$-balls, i.e. the proof ends. So we assume that all disks of $G(D)$ are parallel in $V$. Then $G(D)$ cuts $V$ into a solid torus $V^{\ast}$ and $3$-balls $B_1$, \ldots $B_n$. Since all disks $G(D)$ are parallel in $V$, there are outermost disks $D_1$ and $D_2$ in $G(D)$ which meet $\overline{V^{\ast}}$. Since $V-G(D)$ is preserved by $G$ and $V^{\ast}$ is the only solid torus component of $V-G(D)$, $V^{\ast}$ must be preserved by $G$. Since $D_1$ and $D_2$ are the only disks of $G(D)$ which meet $\overline{V^{\ast}}$, $\{D_1,D_2\}$ is also preserved by $G$. So $V/G$ can be written as
$$V/G=\overline{V^{\ast}}/G\cup_{\{D_1,D_2\}/G}\{\overline{B_1}\cup\ldots\cup\overline{B_n}\}/G.$$
Since $\overline{V^{\ast}}/G$ is $V(A0,1)$ by Lemma \ref{lemma-gm-first} and $\{\overline{B_1}\cup\ldots\cup\overline{B_n}\}/G$ connects $\overline{V^{\ast}}/G$ only through $\{D_1,D_2\}/G$, the base space of $V/G$ cannot be a $3$-ball. But this contradicts Proposition \ref{corollary-KJS}.
\end{proof}

\begin{lemma}\label{lemma-gm-an-1}
Let $G$ be a finite group of non-trivial orientation preserving diffeomorphisms which acts on a genus two handlebody $V$. Suppose that $\mathcal{S}$ is a set of properly embedded surfaces in $V$ where $\mathcal{S}$ cuts $V$ into two connected components $V_1$ and $V_2$, where both $V_1$ and $V_2$ are noncontractible in $V$, and $V_1$ or $V_2$ contains a meridian disk of $V$. If $G$ preserves both $V_1$ and $V_2$, then $G\cong  \mathbb{Z}_2$ or $\mathbb{D}_2$.
\end{lemma}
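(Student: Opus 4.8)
The plan is to pin $G$ down to $\mathbb{Z}_2$ or $\mathbb{D}_2$ by excluding every group of order divisible by $3$ or $4$, and the whole argument reduces to a single cyclic case. First I would note that it suffices to rule out $G\cong\mathbb{Z}_3$, $\mathbb{Z}_4$ and $\mathbb{Z}_6$. Indeed, if $G\cong\mathbb{D}_3$, $\mathbb{D}_4$ or $\mathbb{D}_6$, then $G$ has an index two cyclic subgroup $\bar G\cong\mathbb{Z}_3$, $\mathbb{Z}_4$ or $\mathbb{Z}_6$, and $\bar G$ still preserves both $V_1$ and $V_2$ and still leaves a meridian disk of $V$ inside one of them; so any contradiction obtained for such a cyclic group is a contradiction for $G$. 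After this reduction the only survivors are $\mathbb{Z}_2$ and $\mathbb{D}_2$, which is exactly the assertion.

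So I assume $G$ is cyclic with $|G|\ge 3$ and, renaming if necessary, that $V_1$ contains a meridian disk $D$ of $V$. The key observation is that the orbit of $D$ cannot leave the invariant piece: since $G$ preserves $V_1$, we have $g(D)\subset g(V_1)=V_1$ for every $g\in G$, so $G(D)\subset V_1$ and in particular $G(D)$ is disjoint from the interior of $V_2$. I would then replace $D$ by a meridian disk of $V$ whose $G$-orbit consists of pairwise disjoint (or coincident) disks, i.e.\ so that $g(D)\cap D$ is $\emptyset$ or $D$ for all $g$; this is the step where equivariant positioning of the disk inside $V_1$ is required. Granting it, Lemma~\ref{lemma-gm-additional} applies and shows that $G$ does not preserve $D$ and that $G(D)$ cuts $V$ into $3$-balls.

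The contradiction is then immediate. Because $G(D)\subset V_1$ is disjoint from the interior of $V_2$, the piece $V_2$ lies in a single component $C$ of $V$ cut along $G(D)$. By Lemma~\ref{lemma-gm-additional} the component $C$ is a $3$-ball; but then $V_2$ is contained in a $3$-ball of $V$, so the inclusion $V_2\hookrightarrow V$ is nullhomotopic, contradicting the hypothesis that $V_2$ is noncontractible in $V$. Hence no cyclic $G$ of order $\ge 3$ can occur, and with the reduction of the first paragraph this forces $G\cong\mathbb{Z}_2$ or $\mathbb{D}_2$.

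The main obstacle is the equivariant positioning used in the second paragraph: the hypothesis only supplies \emph{some} meridian disk of $V$ inside $V_1$, whereas Lemma~\ref{lemma-gm-additional} needs one whose $G$-orbit is a disjoint family of disks, and I expect to produce it by an equivariant disk/isotopy argument within the invariant handlebody $V_1$. A more hands-on route that sidesteps this, and which I would use as a fallback, is to cut $V$ along the given disk $D$: since $D\subset V_1$ is non-separating in $V$, the result is a solid torus $W$ containing $V_2$, and one checks that the noncontractible piece $V_2$ must itself be a solid torus (its fundamental group is forced to be cyclic inside $\pi_1(W)\cong\mathbb{Z}$). Then $V_2$ is a $G$-invariant solid torus summand, so by Lemma~\ref{lemma-gm-first} $\overline{V_2}/G$ is $V(A0,1)$, whose singular core is an isolated circle disjoint from $\mathcal{S}$; this isolated circle survives in $V/G=\overline{V_1}/G\cup\overline{V_2}/G$, contradicting Proposition~\ref{corollary-KJS} exactly as in Lemmas~\ref{lemma-gm-1} and~\ref{lemma-gm-2}.
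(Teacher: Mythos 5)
Your primary argument coincides with the paper's own proof: the same reduction of $\mathbb{D}_3$, $\mathbb{D}_4$, $\mathbb{D}_6$ to their index-two cyclic subgroups, the same equivariant positioning of the meridian disk (which the paper justifies by citing the Meeks--Yau equivariant loop theorem/Dehn lemma \cite{MY}), the same application of Lemma~\ref{lemma-gm-additional}, and the same contradiction that the invariant piece missed by $G(D)$ would lie in a single $3$-ball component of $V-G(D)$, violating noncontractibility. One caution: your fallback argument is flawed, since the map $\pi_1(V_2)\to\pi_1(W)\cong\mathbb{Z}$ need not be injective (only its image is cyclic), so $V_2$ is not forced to be a solid torus and Lemma~\ref{lemma-gm-first} need not apply --- but as your main argument is exactly the paper's, the fallback is unnecessary.
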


\begin{proof}
Let the meridian disk of $V$ be $D$. We can assume that $g(D)\cap D=\emptyset$ or $D$ for all $g\in G$ by the equivariant loop theorem/Dehn lemma (see \cite{MY}.)
In the case of $G\cong \mathbb{Z}_3$, $\mathbb{Z}_4$ or $\mathbb{Z}_6$, $V-G(D)$ consists of $3$-balls by Lemma \ref{lemma-gm-additional}, where $G(D)\subset V_1$ or $G(D)\subset V_2$ ($G$ preserves both $V_1$ and $V_2$.) Then we get $\operatorname{int}(V_2) \subset V-G(D)$ or $\operatorname{int}(V_1)\subset V-G(D)$, i.e. $V_2$ or $V_1$ is contractible in $V$, this contradicts the hypothesis.

In the case of $G\cong \mathbb{D}_3$, $\mathbb{D}_4$ or $\mathbb{D}_6$, $G$ must have an index 2 cyclic subgroup $\bar{G}$. But $|\bar{G}|\geq3$ and $\bar{G}$ also preserves $V_i$ for $i=1,2$, we get a contradiction using the same arguments in the case of $G\cong \mathbb{Z}_3$, $\mathbb{Z}_4$ or $\mathbb{Z}_6$.
\end{proof}

\begin{lemma}\label{lemma-gm-an-2}
Let  $G$ be a finite group of non-trivial orientation preserving diffeomorphisms which acts on a genus two handlebody $V$. Suppose that $\mathcal{S}$ is a set of properly embedded surfaces in $V$ where $\mathcal{S}$ cuts $V$ into two solid tori $V_1$ and $V_3$, and a genus two handlebody $V_2$, where
    \begin{enumerate}
        \item $V'=\overline{V_1}\cup \overline{V_2}$ is a genus two handlebody,
        \item both $V_1$ and $V_2$ are non-contractible in $V'$ and $V$, and
        \item there is a meridian disk $D$ in $V$ such that $D\cap V_1=\emptyset$ and $D\cap V_2$ is a non-empty set of meridian disks of $V'$.
    \end{enumerate}
If $G$ preserves $V_i$ for $i=1,2,3$, then $G\cong  \mathbb{Z}_2$ or $\mathbb{D}_2$. If $G$ preserves $V_2$ and exchanges $V_1$ and $V_3$, then $G\cong \mathbb{Z}_2$ or $\mathbb{D}_2$.
\end{lemma}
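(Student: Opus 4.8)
The plan is to treat the two assertions separately, reducing the first directly to Lemma~\ref{lemma-gm-an-1} and the second to the first by passing to the index-two subgroup that stabilizes $V_1$.

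For the case in which $G$ preserves each of $V_1,V_2,V_3$, I would first note that $G$ then preserves $V'=\overline{V_1}\cup\overline{V_2}$, a genus two handlebody by hypothesis~(1), and that the restriction of $G$ to $V'$ is still a non-trivial orientation-preserving action (an element fixing the open set $\operatorname{int}(V')$ pointwise would be the identity). The part of $\mathcal{S}$ interior to $V'$ cuts $V'$ into exactly the two components $V_1$ and $V_2$, both non-contractible in $V'$ by hypothesis~(2), while $V_2$ contains a meridian disk of $V'$ by hypothesis~(3). Hence the hypotheses of Lemma~\ref{lemma-gm-an-1} hold verbatim for the $G$-action on $V'$, and that lemma gives $G\cong\mathbb{Z}_2$ or $\mathbb{D}_2$ at once.

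For the case in which $G$ preserves $V_2$ and exchanges $V_1$ and $V_3$, let $G'$ be the stabilizer of $V_1$; since the orbit $\{V_1,V_3\}$ has size two, $G'$ has index two and preserves each of $V_1,V_2,V_3$. Applying the first case to $G'$ shows $G'$ is trivial, $\mathbb{Z}_2$, or $\mathbb{D}_2$. As $G$ is a subgroup of $\mathbb{D}_4$ or $\mathbb{D}_6$ containing $G'$ with index two, this already excludes $G\cong\mathbb{Z}_3$ (which has no index-two subgroup, so cannot realize an orbit of size two) and $G\cong\mathbb{Z}_6,\mathbb{D}_3,\mathbb{D}_6$ (whose only index-two subgroups are $\mathbb{Z}_3$, $\mathbb{Z}_6$ or $\mathbb{D}_3$, none of which is allowed by the first case). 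The surviving possibilities are $G\cong\mathbb{Z}_2$ (with $G'$ trivial), $G\cong\mathbb{D}_2$ (with $G'\cong\mathbb{Z}_2$), $G\cong\mathbb{Z}_4$ (with $G'\cong\mathbb{Z}_2$), and $G\cong\mathbb{D}_4$ (with $G'\cong\mathbb{D}_2$); the first two are the desired conclusion, so it remains to rule out $\mathbb{Z}_4$ and $\mathbb{D}_4$.

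To exclude these I would decompose $V/G=\overline{V_1}/G'\cup\overline{V_2}/G$, the two pieces meeting only along the boundary, read the singular locus of $\overline{V_1}/G'$ from Table~\ref{table-1} and that of the genus two quotient $\overline{V_2}/G$ from Figure~\ref{fig-ho}, and verify that the glued singular locus matches no orbifold permitted by Proposition~\ref{corollary-KJS}. For $G\cong\mathbb{D}_4$ I expect a parity count of valency-three vertices to give an odd total, exactly as in Lemmas~\ref{lemma-gm-12} and~\ref{lemma-gm-2}; for $G\cong\mathbb{Z}_4$ the index-four edges contributed by $\overline{V_2}/G$ together with the possible isolated circle of $\overline{V_1}/G'$ should fail to assemble into an admissible cyclic quotient, as in the $\mathbb{Z}_4$ analysis of Lemma~\ref{lemma-gm-12}. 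The main obstacle is precisely this step: unlike the earlier lemmas, the middle piece $V_2$ is itself a genus two handlebody, so $\overline{V_2}/G$ is a full handlebody orbifold rather than a single singular arc in a ball, and the real work is in pinning down its singular locus from Figure~\ref{fig-ho} and tracking how that glues to $\overline{V_1}/G'$. (One might hope to shortcut the cyclic case $\mathbb{Z}_4$ via Lemma~\ref{lemma-gm-additional}, cutting $V$ into balls by an equivariant meridian disk and contradicting the non-contractibility of $V_1$, but this needs care since the orbit of the disk need not avoid $V_1$.)
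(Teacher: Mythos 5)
Your first case and your reduction of the exchange case to $G\cong\mathbb{Z}_2$, $\mathbb{D}_2$, $\mathbb{Z}_4$ or $\mathbb{D}_4$ agree exactly with the paper. The genuine gap is the step you yourself flag as the main obstacle: actually eliminating $\mathbb{Z}_4$ (the paper then disposes of $\mathbb{D}_4$ by passing to its index-two cyclic subgroup $\bar{G}\cong\mathbb{Z}_4$, which either preserves all three pieces --- impossible by the first case --- or swaps $V_1$ and $V_3$). Worse, the method you propose for this step cannot succeed. If $G\cong\mathbb{Z}_4$, the subgroup $G'\cong\mathbb{Z}_2$ may act freely on the solid torus $V_1$, so that $\overline{V_1}/G'=V(A0,1)$ contributes no singular locus at all, while $\overline{V_2}/G$ is the quotient of a genus two handlebody by a faithful $\mathbb{Z}_4$-action, hence by Proposition \ref{corollary-KJS} a $3$-ball with exactly two singular arcs, of indices $4$ and $2$. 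The glued quotient $V/G=\overline{V_1}/G'\cup\overline{V_2}/G$ then has underlying space a $3$-ball and singular locus two arcs of indices $4$ and $2$ --- precisely the admissible $\mathbb{Z}_4$-quotient of a genus two handlebody. In this subcase Proposition \ref{corollary-KJS} is satisfied rather than contradicted, so no census of arcs, isolated circles or vertices can finish the argument. Your parity heuristic for $\mathbb{D}_4$ fails for the same reason: with $G'\cong\mathbb{D}_2$, Table \ref{table-1} gives an even number of valency-three vertices in $\overline{V_1}/G'$, and $\overline{V_2}/G$, being itself an admissible $\mathbb{D}_4$-quotient of a genus two handlebody, also has an even number, so the total is even --- unlike Lemmas \ref{lemma-gm-12} and \ref{lemma-gm-2}, where the middle piece was a $3$-ball contributing exactly one such vertex.

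The missing idea is the paper's geometric argument, and it is exactly where hypothesis (3) --- which your outline never uses --- enters. For $G\cong\mathbb{Z}_4$ the index-$4$ arc of $V/G$ is the image of the fixed-point set $A$ of $G$, a single arc; since $G$ restricts to a faithful $\mathbb{Z}_4$-action on the genus two handlebody $V_2$, whose quotient must also contain an index-$4$ arc, $A$ lies in $V_2$, so $V_1$ and $V_3$ are disjoint from an invariant neighborhood $N(A)$. After making $D$ equivariant via the equivariant loop theorem/Dehn lemma ($g(D)\cap D=\emptyset$ or $D$ for all $g$), Lemma \ref{lemma-gm-additional} shows $G(D)$ cuts $V$ into $3$-balls, and on the ball containing $A$ the generator of $G$ acts as a $\pi/2$-rotation about $A$. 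Hypothesis (3) --- the meridian disk $D$ avoids $V_1$ and meets $V_2$ in meridian disks of $V'$ --- together with non-contractibility of $V_1$ and $V_3$ then forces the two solid tori to run past the axis as two disjoint strips in the top-down picture of Figure \ref{figure-gm-a1}, and one checks that a $\pi/2$-rotation about $A$ cannot interchange two such strips unless both meet $A$, which they do not. This contradiction, not an orbifold count, is what eliminates $\mathbb{Z}_4$, and via $\bar{G}$ also $\mathbb{D}_4$; your proposal as it stands stops short of it, and its suggested replacement would not close the gap.
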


\begin{proof}
We can assume that $g(D)\cap D=\emptyset$ or $D$ for all $g\in G$ by the equivariant loop theorem/Dehn lemma.

Suppose that $G$ preserves each of $V_i$, $i=1,2,3.$ Then $G$ preserves the genus two handlebody $V'$. Since $V'$ holds the hypothesis of Lemma \ref{lemma-gm-an-1} by the decomposition of $V_1$ and $V_2$, we get $G\cong \mathbb{Z}_2$ or $\mathbb{D}_2$.

Suppose that $G$ preserves $V_2$ and exchanges $V_1$ and $V_3$. Then $V/G$ can be written as
$$V/G=\overline{V_1}/G' \cup \overline{V_2}/G,$$
where $G'$ is an index two subgroup of $G$ which preserves both $V_1$ and $V_3$, and $\overline{V_1}/G'\cap \overline{V_2}/G$ appears only on the boundary of each quotient.

Since $V'=\overline{V_1}\cup \overline{V_2}$ holds the hypothesis of Lemma \ref{lemma-gm-an-1} by the decomposition of $V_1$ and $V_2$ with the group $G'$, we get $G'\cong \mathbb{Z}_2$ or  $\mathbb{D}_2$ (if $G'$ is nontrivial), or $G'$ is trivial. So $G\cong \mathbb{Z}_2$, $\mathbb{Z}_4$, $\mathbb{D}_2$ or $\mathbb{D}_{4}$.

Suppose that $G\cong \mathbb{Z}_4$. Then $V/G$ is a $3$-ball with singular locus of two arcs, where the indices are $4$ and $2$ by Proposition \ref{corollary-KJS}. In particular, the arc of index $4$ in $V/G$ must be induced from the set of fixed points of $G$ from $|G|=4$. By Lemma \ref{lemma-gm-additional}, $G(D)$ cuts $V$ into $3$-balls. Let $A$ be the set of the fixed points of $G$ in $V$ and $B$ be the ball which contains $A$ among $V-G(D)$. Obviously, $G$ preserves $B$. Since $G$ is a finite cyclic group action on a $3$-ball $B$, the generator of $G$, say $\gamma$, can be realized by a $\pi/2$-rotation $\gamma$ about $A$ on $B$.

\begin{figure}
\centering
\includegraphics[viewport=28 432 566 780,width=8cm]{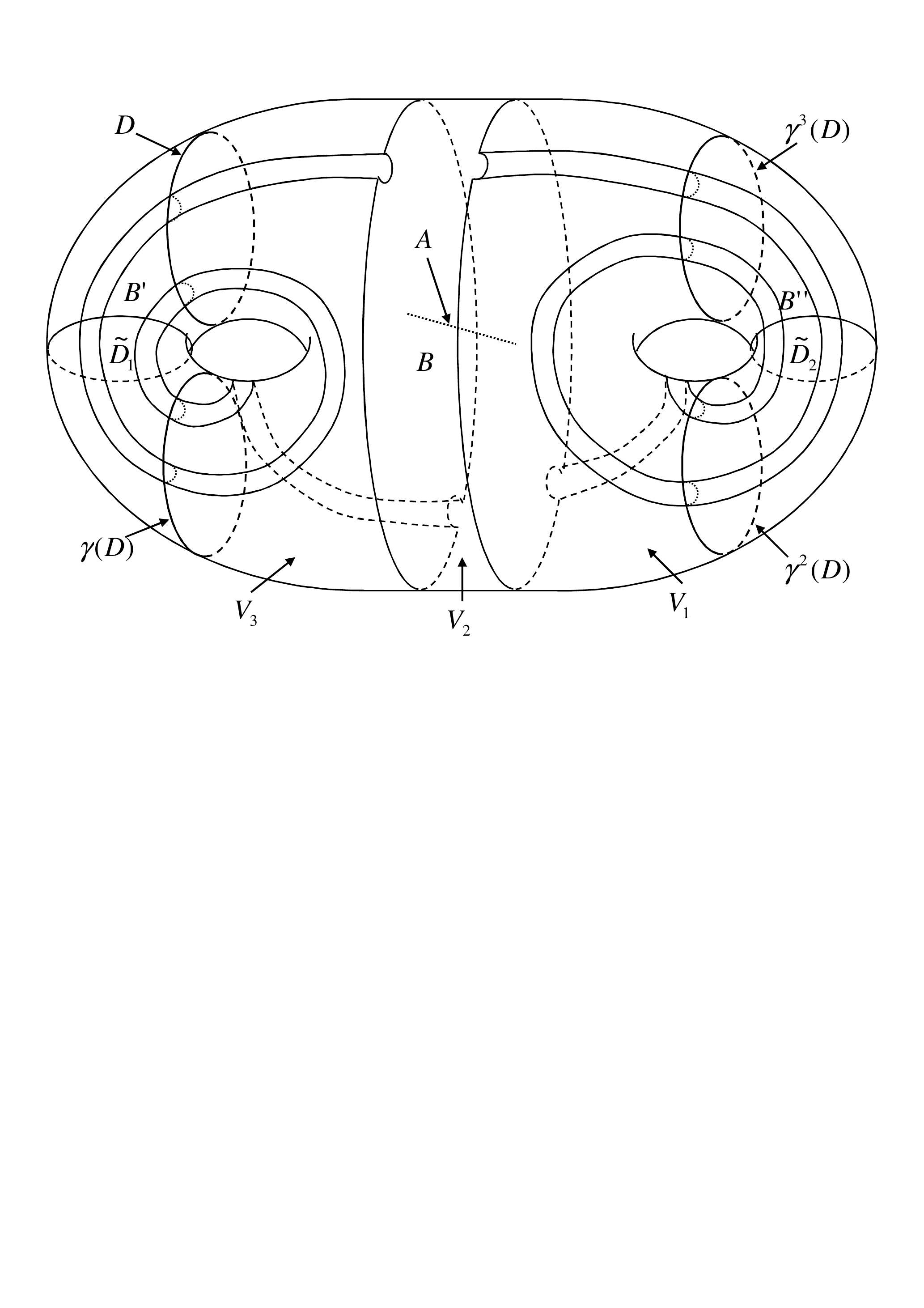}
\caption{$G(D)$ cuts $V$ into three $3$-balls.\label{figure-gm-add2}}
\end{figure}

Figure \ref{figure-gm-add2} is a good example. In this figure, $G(D)$ cuts $V$ into three $3$-balls $B$, $B'$ and $B''$. $G(D)$ consists of four disks, and $G(\tilde{D}_1)$ consists of two disks $\tilde{D}_1$ and $\tilde{D}_2$. The arc of index $2$ in the singular locus of $V/G$ is from the quotient $(\tilde{D}_1\cup\tilde{D}_2)/G$.

So there exist a $G$-invariant open regular neighborhood $N(A)$ of $A$ in $B$ such that $\overline{N(A)}\simeq D\times I$,  $D\times\{0,1\}\subset\partial B$, $G$ preserves each level $D\times \{i\}$ for $0\leq i \leq 1$, and $A$ is the core of $D\times I$.

Since the singular locus of $V/G$ has only one arc of index $4$ and $G$ also acts on a genus two handlebody $V_2\subset V$, both $V$ and $V_2$ have the same fixed point set if we use Proposition \ref{corollary-KJS} again. Therefore, we can assume that $N(A)\subset V_2$, i.e. both $V_1$ and $V_3$ do not meet $N(A)$. So we can draw a top down view of $V_2$ near $N(A)$ as Figure \ref{figure-gm-a1}. Since the $\pi/2$-rotation $\gamma$ exchanges $V_1$ and $V_3$, $V_1$ (so $V_3$) is not contractible in $V$, and there is a meridian disk $D$ in $V$ which does not meet $V_1$, $V_1$ is depicted as a strip from the upper side to the lower side and $V_3$ is depicted as a strip from the left side to the right side (so the upper side extends outside the dotted circle and will connect the lower side, and the left side extends outside the dotted circle and will connect the right side in the top down view.) But $\gamma$ cannot exchange $V_1$ and $V_3$ from Figure \ref{figure-gm-a1} unless both $V_1$ and $V_3$ meet the core $A$. So we get a contradiction.

\begin{figure}
\centering
\includegraphics[viewport=28 494 315 781,width=6cm]{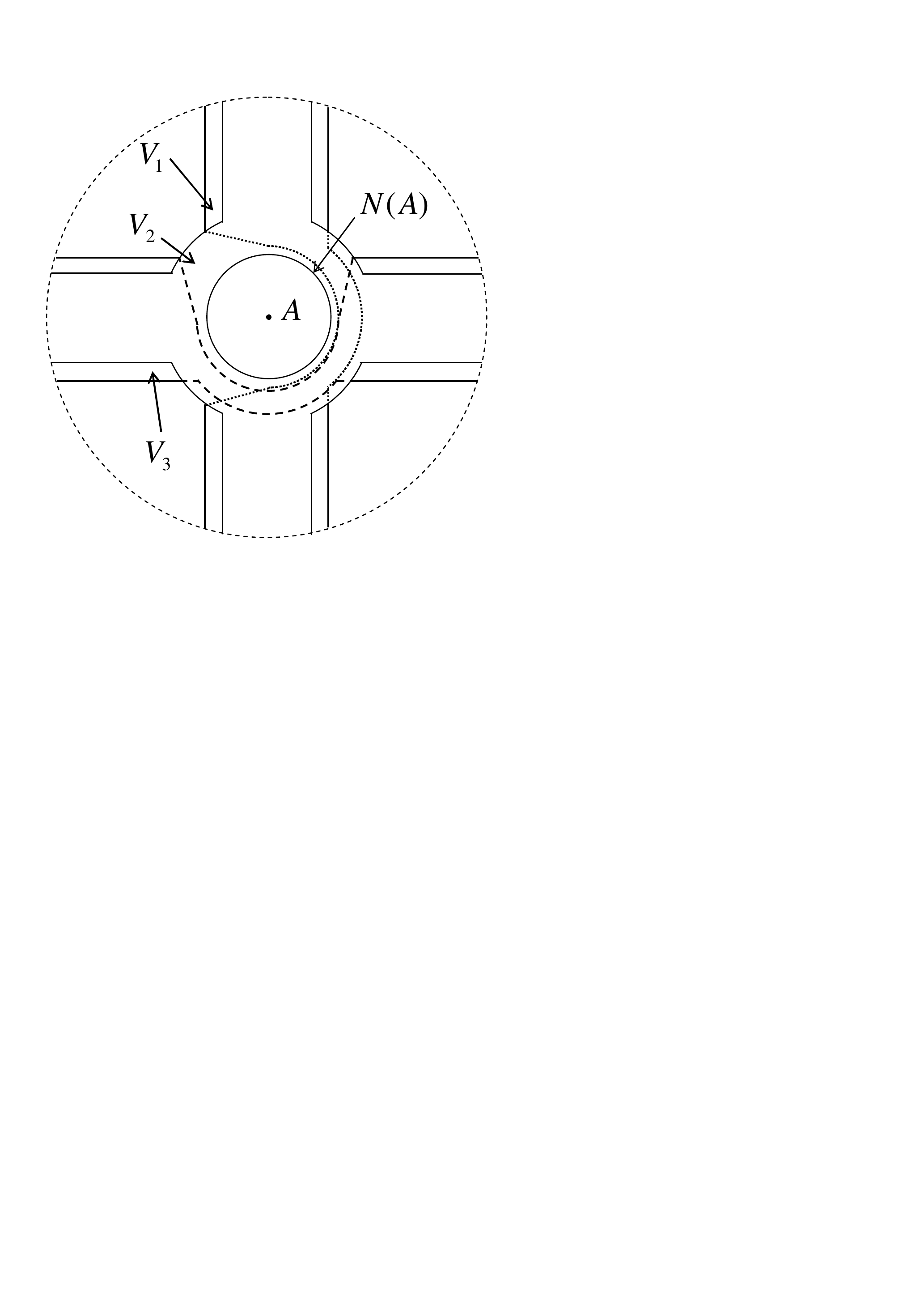}
\caption{A local figure near $N(A)$. In this figure both $V_1$ and $V_3$ do not meet the core $A$.\label{figure-gm-a1}}
\end{figure}

Suppose that $G\cong \mathbb{D}_4$, then $G$ must have an index 2 cyclic subgroup $\bar{G}\cong \mathbb{Z}_4$. Then $\bar{G}$ preserves each $V_i$ for $i=1,2,3$, or preserves $V_2$ and exchanges $V_1$ and $V_3$. In each case, we get a contradiction by the previous arguments.

\end{proof}

\section{Proof of Theorem \ref{main-theorem} in the case of Condition A with one separating JSJ-torus.\label{section2-1}}
In this section, we will prove Theorem \ref{main-theorem} when the JSJ-tori and the Heegaard splitting satisfy Condition A in the one separating JSJ-torus case.

Let $T$ be the JSJ-torus. By this JSJ-decomposition, we will denote that $M=M_1 \cup_T M_2$  where each $M_i$ is a piece of the JSJ-decomposition for $i=1,2$.

At first, we consider the case when $T\cap V_1$ consists of minimal number of disks.

\begin{proposition}(Case 2 of ``proof of Theorem'' in p.449 \cite{KO}) \label{proposition-1} Suppose that $M$ has only one separating JSJ-torus. If $T\cap V_1$ consists of disks and the number of disks is minimal, then $T\cap V_1$ consists of one disk or two mutually disjoint disks, i.e. the Heegaard splitting and the JSJ-tori satisfy Condition A.
\end{proposition}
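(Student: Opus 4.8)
The plan is to first pin down the intersection pattern forced by the incompressibility of $T$, and then to bound the number of disks by a combinatorial/Euler-characteristic analysis in the genus two handlebody $V_1$, exploiting that $T$ is separating. Since $T$ is a JSJ-torus it is incompressible in $M$. Each component of $T\cap V_1$ is a disk, hence has a single boundary circle, so the circles of $T\cap F$ are exactly the boundaries $c_i=\partial D_i$ of these disks. Each $c_i$ bounds the disk $D_i$ in $V_1\subset M$, so incompressibility forces $c_i$ to bound a disk on $T$ as well; as the interior of $D_i$ misses $F$, that disk is $D_i$ itself, i.e. each $D_i$ is an innermost disk subsurface of $T$. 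Writing $n=|T\cap V_1|$, the $D_i$ are then $n$ disjoint disks in $T$, so $T\cap V_2=T\setminus\operatorname{int}(\cup D_i)$ is \emph{connected} of genus one with exactly $n$ boundary circles, and it is incompressible in $V_2$ because $T$ is incompressible in $M$. Finally the hypothesis that no component of $T\cap V_1$ is $\partial$-parallel says precisely that each $c_i$ is essential on $F$, i.e. the $c_i$ are disjoint essential meridians of $V_1$ that (by incompressibility of $T\cap V_2$) are not meridians of $V_2$.

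Next I would combine the separating hypothesis with minimality. Because $T$ separates $M=M_1\cup_T M_2$, the disk system $\{D_i\}$ separates $V_1$ into pieces that are two-colored by $M_1$ and $M_2$, so no $D_i$ is self-adjacent after cutting. Recording this as a connected bipartite dual graph $\Gamma$ (pieces as vertices, disks as edges) and using $\operatorname{genus}(V_1)=b_1(\Gamma)+\sum_P\operatorname{genus}(P)=2$ together with $\chi(V_1)=\sum_P\chi(P)-n=-1$ constrains the models for small $n$ (for $n=1$, a single separating disk cutting $V_1$ into two solid tori; for $n=2$, a solid torus and a $3$-ball, or two solid tori, joined by a $1$-handle whose feet are $D_1,D_2$). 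The minimal-disk hypothesis then rules out the reductions that remain available in the ``all disks'' regime: there are no trivial circles of $T\cap F$ on $F$ (already excluded by essentiality), and no innermost-disk cancellations on the $V_2$ side (excluded since $T\cap V_2$ is connected of genus one and so has no disk component). I would note here that one cannot naively cancel a pair of parallel disks, since replacing two disk caps by an annulus changes $\chi(T)$ and is not an isotopy; this is exactly why the count is delicate.

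The main obstacle is to exclude $n\ge 3$. Under $n\ge 3$ the bipartite/genus bookkeeping leaves only a short list of models (most notably two $3$-balls joined along three disks, the $\theta$-graph spine of $V_1$, and also a solid torus joined to two balls), in each of which $\{c_1,c_2,c_3\}$ is a pants decomposition of $F$ by meridians of $V_1$ that are non-meridians of $V_2$, and each JSJ piece takes the form $M_j=(\text{a }3\text{-ball})\cup_{P}(\text{a piece of }V_2)$ glued along a pair of pants $P\subset F$. I would then aim to extract from each such configuration either an essential curve of $T$ bounding a disk in some $M_j$, contradicting the incompressibility of the JSJ-torus $T$, or a reducing/destabilizing disk for the genus two splitting, contradicting that the Heegaard genus is exactly two (equivalently, exhibiting a type-A isotopy that lowers the disk count and so contradicts minimality). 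This final step, which is carried out in Case 2 on p.\ 449 of \cite{KO}, is where the interplay between the $\theta$-graph structure of $V_1$, the incompressible surface $T\cap V_2$ in $V_2$, and the genus two Heegaard structure must be used in full, and I expect it to be the crux of the whole argument.
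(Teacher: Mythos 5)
The first thing to say is that the paper contains no proof of this proposition: it is imported as a black box from Kobayashi, which is exactly what the parenthetical attribution to Case 2 of the proof of the Theorem on p.~449 of \cite{KO} means. So the only meaningful question is whether your sketch stands on its own, and it does not. Your first two paragraphs --- the circles $c_i$ are essential on $F$, the pieces of $V_1$ cut along the $D_i$ are two-colored by $M_1$ and $M_2$ so the dual graph is bipartite, and the genus/Euler-characteristic bookkeeping that identifies the $n=1$ and $n=2$ models (one separating disk giving two solid tori; two disks giving a ball and a solid torus, or a ball and two solid tori) --- are essentially correct, but they only normalize the picture and reproduce what is easy. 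The entire content of the proposition is the exclusion of $n\ge 3$, and at precisely that point you write that you \emph{would aim to} extract either a compressing disk for $T$ in some $M_j$ or a destabilizing disk for the splitting, and then defer to Case 2 on p.~449 of \cite{KO} --- the very citation the paper itself uses for the whole statement. Naming two candidate contradictions is not deriving one: in the $\theta$-graph configuration (two $3$-balls glued along three disks), for example, nothing in your bipartite/Euler-characteristic setup produces either disk; what Kobayashi actually does there is run the hierarchy and isotopy-of-type-A machinery on the genus-one surface $T\cap V_2$ inside $V_2$, and that analysis \emph{is} the proof. So either cite \cite{KO} for the full statement, as the paper does, or carry out that argument; the middle ground you propose is a genuine gap.

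A secondary but real error: you assert that $T\cap V_2$ is incompressible in $V_2$ ``because $T$ is incompressible in $M$.'' That implication is false as stated. A compressing disk for $T\cap V_2$ in $V_2$ can have boundary that is essential in the punctured torus $T\cap V_2$ yet inessential in $T$ (it may simply encircle some of the disks $D_i$), and incompressibility of $T$ in $M$ says nothing about such a disk. What rules it out is the minimality hypothesis together with irreducibility of $M$: such a compression would permit an isotopy of $T$ reducing the number of disks of $T\cap V_1$. This matters beyond pedantry, because conflating consequences of minimality with consequences of incompressibility is exactly what makes the $n\ge 3$ case look more tractable than it is --- the reductions available there are isotopies of type A governed by minimality, not direct appeals to incompressibility of the JSJ-torus.
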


Let $T_2$ be $T\cap V_2$. Then as in p.21 of \cite{JA}, we have a hierarchy $(T_2^{(0)},a_0)$, $\cdots$, $(T_2^{(m)},a_m)$ of $T_2$ and a sequence of isotopies of type A which realizes the hierarchy where each $a_i$ is an essential arc in $T_2^{(i)}$ (see p.449 of \cite{KO}).
Let $T^{(1)}$ be the image of $T$ after an isotopy of type A at $a_0$ and $T^{(k+1)}$ ($ k\geq 1$) be the image of $T^{(k)}$ after an isotopy of type A at $a_k$.

By Proposition \ref{proposition-1}, the proof is divided into the two cases.\\

\Case{1} $T\cap V_1$ consists of a disk $\bar{D}$. Since $T$ separates $M$, $\bar{D}$ separates $V_1$ into two solid tori $V_1^1\subset M_1$ and $V_1^2\subset M_2$ (this case is exactly the same as Case 2.1 in p.451 of \cite{KO}, see Figure \ref{figure-1}.)

\begin{figure}
\centering
\includegraphics[viewport=27 428 550 780,width=7cm]{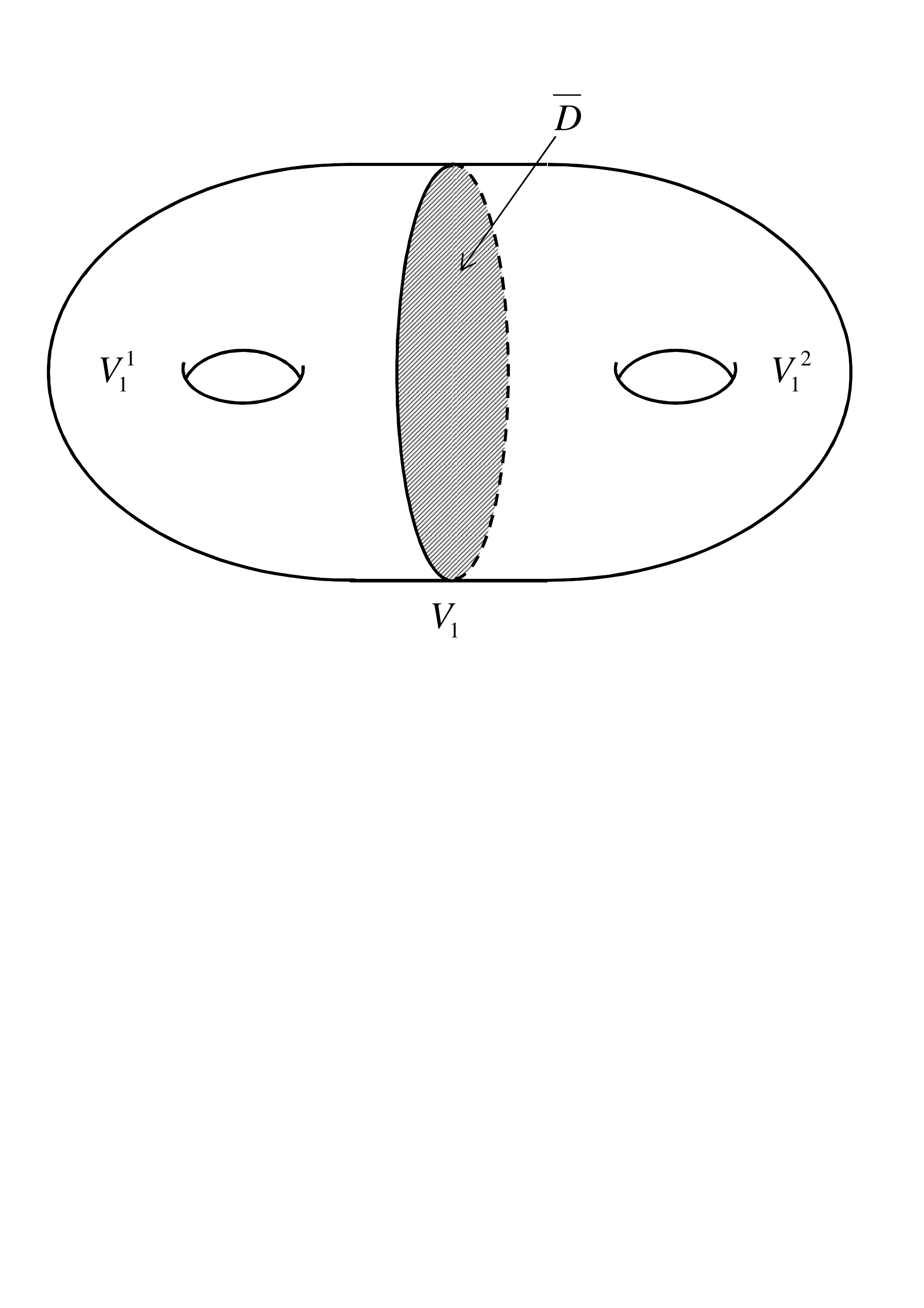}
\caption{In the case when $T\cap V_1$ consists of a disk
$\bar{D}$.\label{figure-1}}
\end{figure}

Since $G$ preserves each piece of the JSJ-decomposition, it also preserves both $V_1^1$ and $V_1^2$. So $G\cong \mathbb{Z}_2$ by Lemma \ref{lemma-gm-1}. This completes the proof of Case 1.\\

\Case{2}
$T\cap V_1$ consists of two disks $\bar{D}_1$ and $\bar{D}_2$ (this case is exactly the same as Case 2.2 in p.451 of \cite{KO}.)

In this case, $T^{(2)}\cap V_1$ consists of two essential annuli $A_1$ and $A_2$ in $V_1$ (see p.451 of \cite{KO}.) In order to know the possible positions of $A_1$ and $A_2$, we need the following lemma.

\begin{lemma}\label{lemma-ko-3-4}(\cite{KO}, Lemma 3.4) Let $\{A_1, A_2\}$ be a system of mutually disjoint, non-parallel, essential annuli in a genus two handlebody $V$. Then either
    \begin{enumerate}[(i)]
        \item \label{lemma-ko-3-4-1}$A_1\cup A_2$ cuts $V$ into a  solid torus $V_1$ and a genus two handlebody $V_2$. Then $A_1\cup A_2\subset \partial V_1$, $A_1\cup A_2\subset\partial V_2$ and there is a complete system of meridian disks $\{D_1,D_2\}$ of $V_2$ such that $D_i\cap A_j=\emptyset$ ($i\neq j$) and $D_i\cap A_i$ ($i=1,2$) is an essential arc of $A_i$,
        \item \label{lemma-ko-3-4-2}$A_1\cup A_2$ cuts $V$ into two solid tori $V_1$, $V_2$ and a genus two handlebody $V_3$. Then $A_1\subset \partial V_1$, $A_2\subset \partial V_2$, $A_1\cup A_2 \subset \partial V_3$ and there is a complete system of meridian disks $\{D_1, D_2\}$ of $V_3$ such that $D_i\cap A_j=\emptyset$ $(i\neq j)$ and $D_i\cap A_i$ ($i=1,2$) is an essential arc of $A_i$ or
        \item \label{lemma-ko-3-4-3}$A_1\cup A_2$ cuts $V$ into a solid torus $V_1$ and a genus two handlebody $V_2$. Then $A_i\subset\partial V_1$ ($i=1$ or $2$, say $1$), $A_2\cap V_1=\emptyset$, $A_1\subset\partial V_2$ and there is a complete system of meridian disks $\{D_1, D_2\}$ of $V_2$ such that $D_1\cap A_2$ is an essential arc of $A_2$ and $D_2\cap            A_i$ ($i=1,2$) is an essential arc of $A_i$, see Figure \ref{fig-lemma-ko-3-4}.
    \end{enumerate}
\end{lemma}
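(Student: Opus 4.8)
The plan is to analyze the two annuli one at a time: first cut $V$ along $A_1$, identify the complementary pieces, then locate $A_2$ inside one of them, while keeping a running Euler-characteristic count to pin down the homeomorphism type of every region. Two standard facts about handlebodies would drive everything. First, a handlebody contains no closed incompressible surface, so any properly embedded incompressible surface other than a disk is $\partial$-compressible; in particular each essential annulus $A_i$ is $\partial$-compressible, and $\partial$-compressing along a spanning arc yields a disk which is a genuine meridian disk precisely because $A_i$ is not $\partial$-parallel. Second, gluing along annuli is additive on Euler characteristic, so for any decomposition $V=V'\cup_A V''$ one has $\chi(V')+\chi(V'')=\chi(V)=-1$, and cutting along a two-sided annulus leaves $\chi$ equal to $-1$.

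First I would settle the one-annulus picture. Cutting $V$ along the essential annulus $A_1$ produces pieces whose Euler characteristics sum to $-1$. A $3$-ball summand is impossible (it would exhibit $A_1$ as $\partial$-parallel), and a pair of solid tori is impossible ($\chi=0\neq -1$); so either $A_1$ is separating and cuts $V$ into a solid torus and a genus two handlebody, or $A_1$ is non-separating and $V$ cut along $A_1$ is a connected genus two handlebody $H$ carrying two copies of $A_1$ on its boundary. Because $A_1\cap A_2=\emptyset$ and $A_2$ is connected, $A_2$ lies in a single complementary piece. When $A_1$ separates, the possibility that $A_2$ lies in the solid-torus piece must be excluded: an incompressible annulus in a solid torus is $\partial$-parallel, which would force $A_2$ to be either $\partial$-parallel in $V$ or parallel to $A_1$, both contrary to hypothesis. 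Hence $A_2$ lies in the genus two piece, and the trichotomy of the lemma is governed by whether $A_2$ separates that piece and by the separability of $A_1$: the separating-$A_1$ case with $A_2$ separating the genus two complement into a solid torus and a genus two handlebody yields (ii); the separating-$A_1$ case with $A_2$ non-separating in the genus two complement yields (iii); and the non-separating-$A_1$ case, with $A_2$ separating $H$ into a solid torus and a genus two handlebody so that the two copies of $A_1$ fall on opposite sides, yields (i). The Euler-characteristic count forces each complementary region to be a solid torus or a genus two handlebody, and fixes their number.

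Next I would produce the complete meridian-disk systems $\{D_1,D_2\}$ of the genus two piece asserted in each case. For this I would $\partial$-compress each $A_i$ along a spanning arc: since $A_i$ is essential and not $\partial$-parallel, the $\partial$-compression yields a meridian disk of the genus two piece meeting $A_i$ in a single essential spanning arc and disjoint from $A_j$ ($j\neq i$). After an innermost-disk/outermost-arc isotopy I would arrange the two resulting disks to be disjoint, and then argue (again by an Euler-characteristic and connectedness count on the genus two piece cut along them) that together they form a complete system $\{D_1,D_2\}$ with $D_i\cap A_j=\emptyset$ for $i\neq j$ and $D_i\cap A_i$ an essential arc of $A_i$, exactly as stated.

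I expect the main obstacle to be ruling out the configurations that are absent from the list, most delicately the case in which both $A_1$ and $A_2$ are non-separating with \emph{connected} complement. Such a complement would be a connected genus two handlebody reglued to itself along two pairs of boundary annuli, a possibility that the Euler-characteristic count alone does not forbid; excluding it (or showing it forces the annuli to be parallel, hence contrary to hypothesis) requires a genuinely combinatorial analysis of how the four annulus-copies sit on the boundary of the complement. The second sensitive point is making the ``insert $A_2$ into the complementary region'' step rigorous, namely showing that an annulus essential in $V$ restricts to a surface that is either essential or $\partial$-parallel in the complementary piece so that the solid-torus and genus two subcases are exhaustive, and that the two $\partial$-compression disks can simultaneously be made disjoint and completed to a full meridian system. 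Both hinge on careful use of the incompressibility and $\partial$-incompressibility of the $A_i$ together with the non-parallel hypothesis.
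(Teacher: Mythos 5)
You have put your finger on the right difficulty but not resolved it, so the proposal is incomplete at exactly the point where the lemma has content. Note first that the paper itself does not prove this statement: Lemma \ref{lemma-ko-3-4} is quoted from \cite{KO}, and the paper only uses the internal structure of Kobayashi's proof later (the Claim in Section 3 refers to his disks $\{D',D''\}$, obtained by surgering each $A_i$ along a $\partial$-compressing disk $\Delta_i$, and to his Cases 1--3, which correspond to conclusions (i)--(iii)). Kobayashi's argument runs in the opposite order from yours: he first $\partial$-compresses the annuli to a pair of disjoint essential disks, classifies the possible disk pairs in a genus two handlebody, and then recovers $A_1\cup A_2$ by the inverse surgery (tubing along boundary arcs), which is what makes the list of cases exhaustive. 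Your plan --- cut along $A_1$ first, then place $A_2$ in the complement --- is forced to enumerate adjacency patterns by hand, and that enumeration is where the gaps lie.

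The decisive gap is the one you concede: excluding the configuration in which $A_1$ and $A_2$ are both non-separating and $V-(A_1\cup A_2)$ is connected. The three conclusions correspond exactly to the dual graphs ``two vertices joined by two edges'' (i), ``a path on three vertices'' (ii), and ``an edge plus a loop'' (iii); the missing dual graph ``one vertex with two loops'' is therefore the entire content of the trichotomy, and a proof that leaves it open has not proved the lemma. It can be excluded --- for instance, the four core circles of the annulus copies on the genus two surface bounding the connected complement cannot be pairwise non-parallel (at most $3g-3=3$ disjoint non-isotopic essential curves), and regluing a parallel pair either exhibits $A_1$ and $A_2$ as parallel in $V$ or produces an essential torus, equivalently a $\mathbb{Z}\times\mathbb{Z}$ or Baumslag--Solitar subgroup of the free group $\pi_1(V)$ --- but some such argument must actually be made, and your Euler-characteristic bookkeeping cannot supply it. Moreover your case assignment silently skips two further patterns that the $\chi$ count allows: when $A_1$ separates and $A_2$ separates the genus two piece, the copy of $A_1$ could a priori lie on the solid-torus side of $A_2$ (a path with the solid torus in the middle, which is \emph{not} conclusion (ii), since there $A_1\cup A_2\subset\partial V_3$ with $V_3$ of genus two); and when $A_1$ is non-separating and $A_2$ separates $H=V$ cut along $A_1$, both copies of $A_1$ could lie on a solid-torus component (not conclusion (iii)). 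Both must be killed by the same kind of $\pi_1$/parallelism arguments and are invisible in your outline. Finally, the claim that the two $\partial$-compression disks can be made disjoint and completed to a meridian system meeting each $A_i$ in a single essential arc is asserted rather than proved; that is again the substance of Kobayashi's proof, not a routine isotopy.
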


\begin{figure}
\centering
\includegraphics[viewport=28 144 336 777, width=8cm]{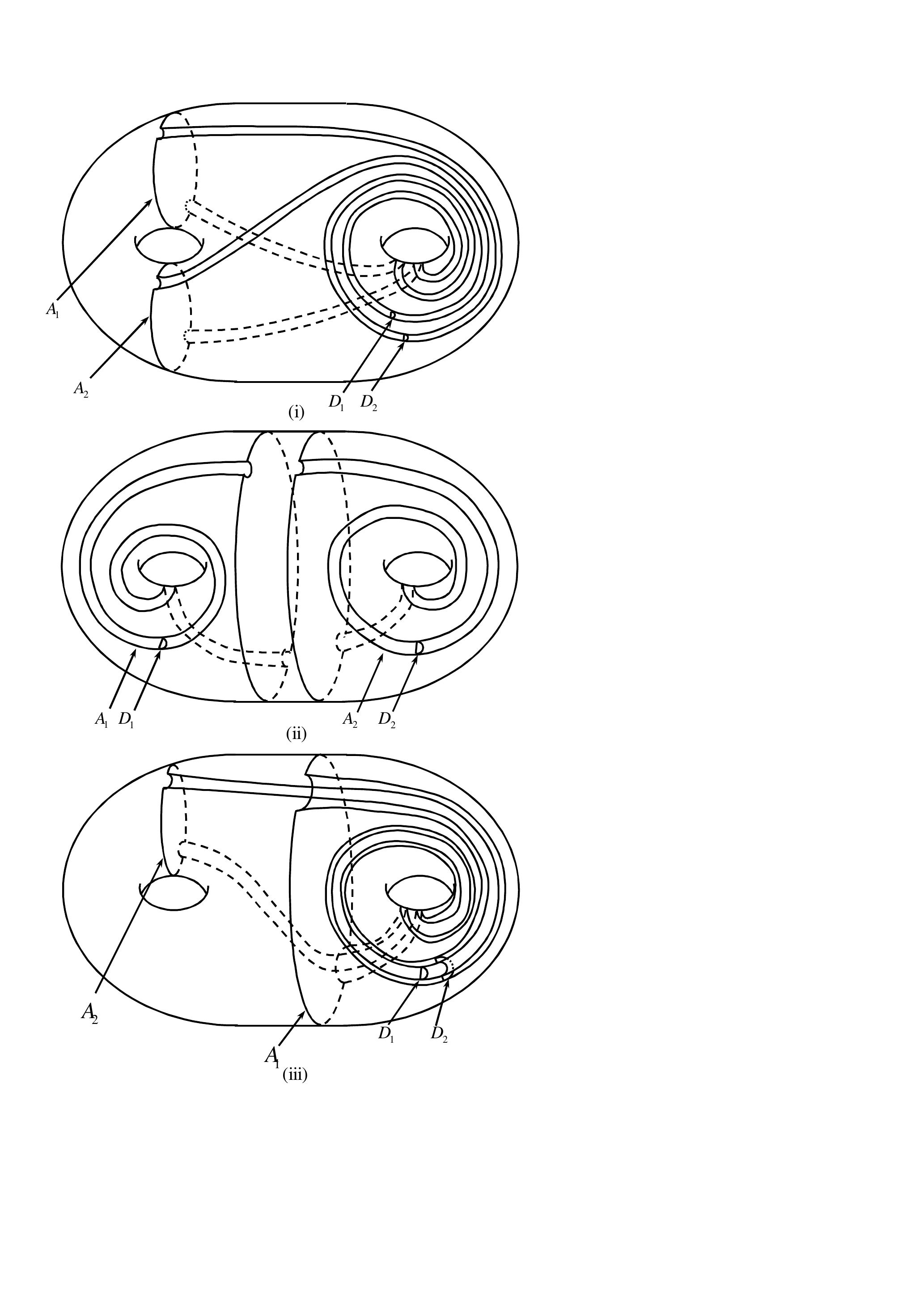}
\caption{The possibilities of two mutually disjoint non-parallel essential annuli in the genus two handlebody. \cite{KO}\label{fig-lemma-ko-3-4}}
\end{figure}

If $A_1$ and $A_2$ are parallel in $V_1$, then then $\{A_1, A_2\}$ satisfies the conclusion (\ref{lemma-ko-3-4-1}) of Lemma \ref{lemma-ko-3-4} (see the claim of Case 2.2 in p.451 of \cite{KO}.) If $A_1$ and $A_2$ are non-parallel in $V_1$, then $\{A_1, A_2\}$ satisfies one of the conclusions of Lemma \ref{lemma-ko-3-4}. Moreover, $\{A_1, A_2\}$ does not satisfy the conclusion (\ref{lemma-ko-3-4-3}) by the argument of the third paragraph of p.452 in \cite{KO}.\\

\Claim{$\bar{D}_1$ and $\bar{D}_2$ are parallel in $V_1$.
If each $\bar{D}_i$ is non-separating in $V_1$ for $i=1,2$, then $\{\bar{D}_1,\,\bar{D}_2\}$ cuts $V_1$ into two parts where one is a $3$-ball and the other  is a solid torus. If each $\bar{D}_i$ is separating in $V_1$ for $i=1,2$, then $\{\bar{D}_1,\,\bar{D}_2\}$ cuts $V_1$ into three parts where one is a $3$-ball and the others are solid tori.}

\begin{proofc}
We already know that $\{A_1,A_2\}$ satisfies the conclusion (\ref{lemma-ko-3-4-1}) or (\ref{lemma-ko-3-4-2}) of Lemma \ref{lemma-ko-3-4}. As the disks $\{D', D''\}$ in the proof of Lemma \ref{lemma-ko-3-4}, each $\bar{D}_i$ is obtained by a surgery on $A_i$ along a disk $\Delta_i$ such that $\Delta_i\cap A_i=a_i$ is an essential arc of $A_i$, $\Delta_i \cap \partial V_1=b_i$ is an arc in $\partial \Delta_i$, $a_i\cap b_i = \partial a_i \cup \partial b_i$ and $a_i\cup b_i=\partial\Delta_i$ for $i=1,2$. So if we check Case 1 (corresponding to (\ref{lemma-ko-3-4-1}) of Lemma \ref{lemma-ko-3-4}) and Case 2 (corresponding to (\ref{lemma-ko-3-4-2}) of Lemma \ref{lemma-ko-3-4}) in the proof of Lemma \ref{lemma-ko-3-4}, then we can see that $\bar{D}_1$ and $\bar{D}_2$ are parallel in $V_1$ (Case 3 in the proof of Lemma \ref{lemma-ko-3-4} corresponds to the conclusion (\ref{lemma-ko-3-4-3}) of Lemma \ref{lemma-ko-3-4}.) If $\bar{D}_1$ and $\bar{D}_2$ are non-separating in $V_1$, then we get the result after the surgery identifying $\Delta_i$ with $D_i$ in (i) of Figure \ref{fig-lemma-ko-3-4}. If $\bar{D}_1$ and $\bar{D}_2$ are separating in $V_1$, then we get the result after the surgery identifying $\Delta_i$ with $D_i$ in (ii) of Figure \ref{fig-lemma-ko-3-4}.
\end{proofc}

By Claim, $\{\bar{D}_1,\bar{D}_2\}$ cuts $V_1$ into two or three parts.

If $\{\bar{D}_1,\bar{D}_2\}$ cuts $V_1$ into two parts, then one is a $3$-ball $V_1^1$ and the other is a solid torus $V_1^2$ (see Figure \ref{figure-4}.)

\begin{figure}
\centering
\includegraphics[viewport=25 464 554 780,width=8cm]{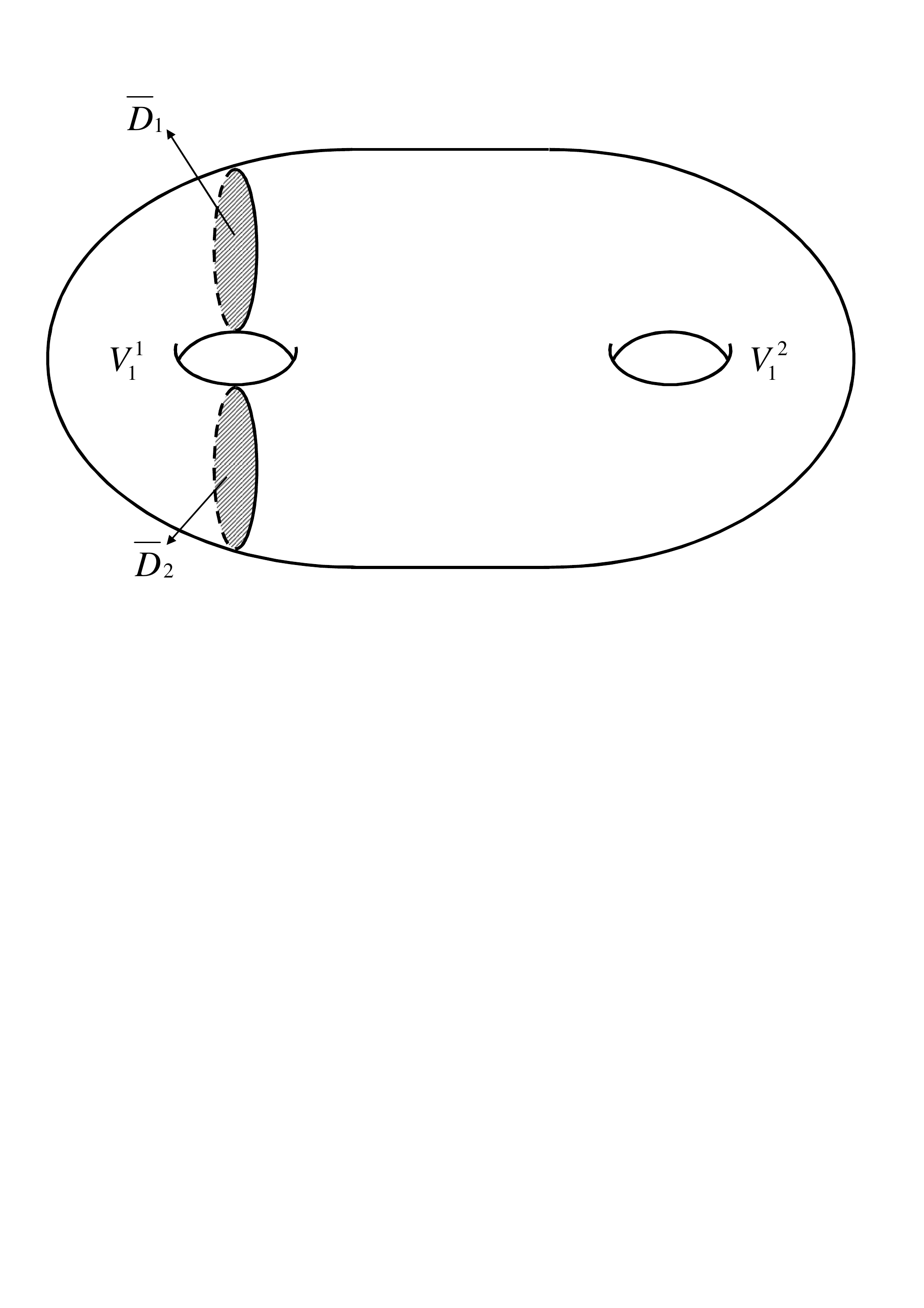}
\caption{$\bar{D}_1$ and $\bar{D}_2$ (non-separating disks)\label{figure-4}}
\end{figure}

Since $G$ preserves each $M_i$ for $i=1,2$ and $V_1$, it also preserves both $V_1^1$ and $V_1^2$. So $G\cong\mathbb{Z}_2$ or $\mathbb{D}_2$ by Lemma \ref{lemma-gm-2}.

If $\{\bar{D}_1,\bar{D}_2\}$ cuts $V_1$ into three parts, then one is a $3$-ball $V_1^2$ and the others are two solid tori $V_1^1$ and $V_1^3$ (see Figure \ref{figure-8}.) In this case, $V_1^1$ and $V_1^3$ are contained in $M_1$ (or $M_2$) and $V_1^2$ is contained in $M_2$ (or $M_1$) ($T$ separates $M$ into two pieces $M_1$ and $M_2$.) So $G$ preserves each $V_1^i$ for $i=1,2,3$, or $G$ preserves $V_1^2$ and exchanges $V_1^1$ and $V_1^3$. Hence we get $G\cong \mathbb{Z}_2$ or $\mathbb{D}_2$ by Lemma \ref{lemma-gm-12}. This completes the proof of Case 2.

\begin{figure}
\centering
\includegraphics[viewport=25 423 552 779,width=8cm]{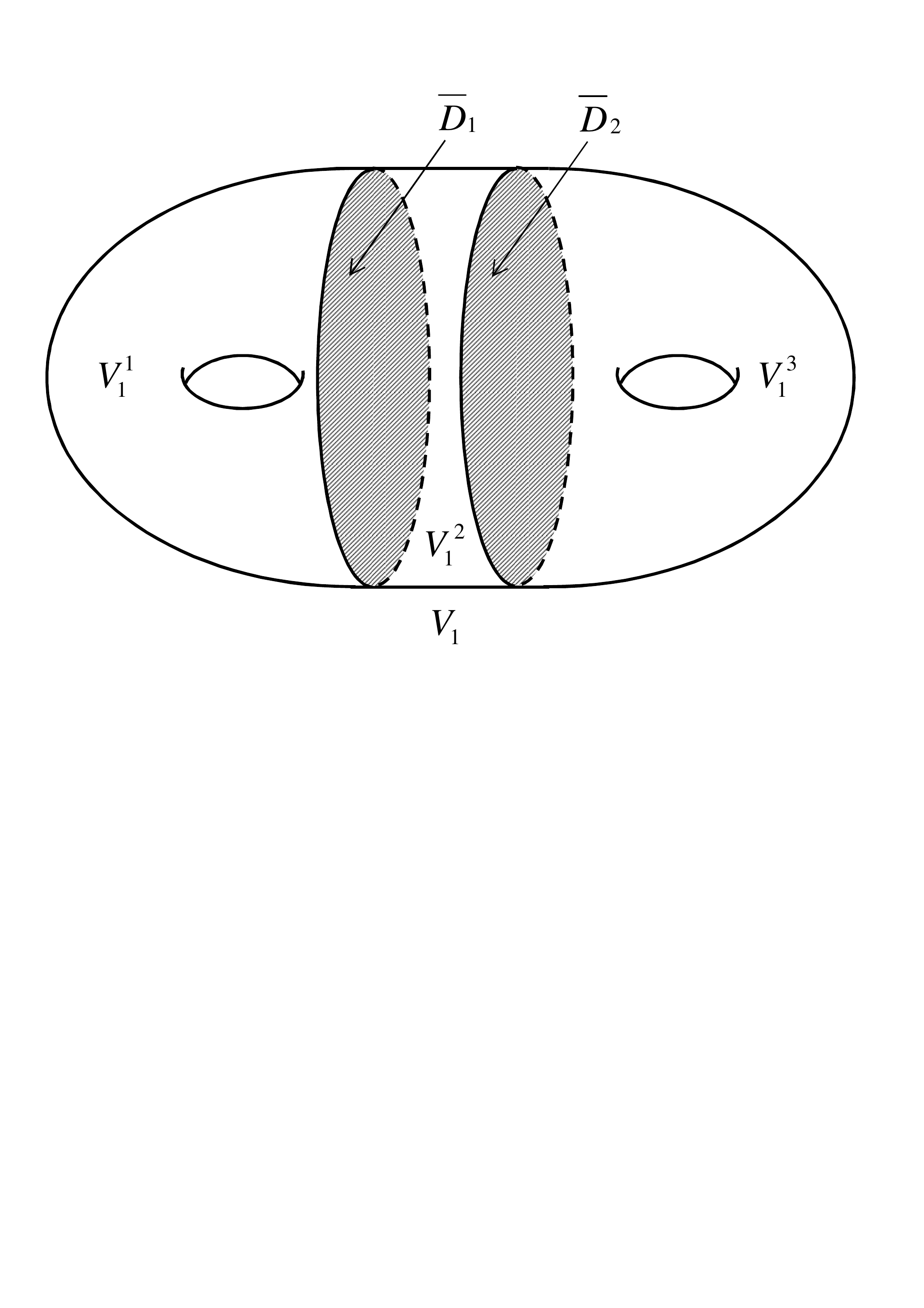}
\caption{$\bar{D}_1$ and $\bar{D}_2$ (separating disks)\label{figure-8}}
\end{figure}

Next, we consider the case when $T\cap V_1$ consists of non-minimal number of disks.
Since Condition A assumes that $T\cap V_1$ consists of at most two disks and the disks are not $\partial$-parallel in $V_1$, we only need to check the case when $T\cap V_1$ consists of two essential disks $\bar{D}_1$ and $\bar{D}_2$ in general situation.

\Case{3} Both $\bar{D}_1$ and $\bar{D}_2$ are separating in $V_1$ (so $\bar{D}_1$ and $\bar{D}_2$ are parallel in $V_1$.) Then $\{\bar{D}_1,\bar{D}_2\}$ cuts $V_1$ into three parts, one is a $3$-ball and the others are two solid tori. So we get $G\cong \mathbb{Z}_2$ or $\mathbb{D}_2$ by similar arguments in the three-parts case of Case 2.

\Case{4} Both $\bar{D}_1$ and $\bar{D}_2$ are non-separating in $V_1$. If they are parallel in $V_1$, then $\{\bar{D}_1,\bar{D}_2\}$ cuts $V_1$ into two parts, one is a $3$-ball and the other is a solid torus. So we get $G\cong \mathbb{Z}_2$ or $\mathbb{D}_2$ by similar arguments in the two-parts case of Case 2. If they are non-parallel in $V_1$, then $\{\bar{D}_1,\bar{D}_2\}$ cuts $V_1$ into a $3$-ball. But this means that $V_1$ consists of only one piece of the JSJ-decomposition, so contradicts the fact that $T$ separates $M$ into two pieces.

\Case{5} $\bar{D}_1$ is separating in $V_1$ and $\bar{D}_2$ is non-separating in $V_1$. But this contradicts the fact that $\bar{D}_2$ is a part of the JSJ-torus $T$ ($T$ separates $M$ into two pieces.)

Now this completes the proof of Theorem \ref{main-theorem} when the JSJ-tori and the Heegaard splitting satisfy Condition A in the one separating JSJ-torus case.

\section{Proof of Theorem \ref{main-theorem} in the case of Condition A with two separating JSJ-tori\label{section2-2}}
In this section, we will prove Theorem \ref{main-theorem} when the JSJ-tori and the Heegaard splitting satisfy Condition A in the two separating JSJ-tori case.

Let $T_1$ and $T_2$ be the JSJ-tori. By this JSJ-decomposition, we will denote that $M=M_1 \cup_{T_1} M_2 \cup_{T_2}M_3$ where each $M_i$ is a piece of the JSJ-decomposition for $i=1,2,3$.

Let $\mathcal{T}$ be $T_1\cup T_2$ and $\mathcal{T}'$ be $\mathcal{T}\cap V_2$.

\begin{proposition}(Case 3 in pp.452--453 of \cite{KO})\label{proposition-5}
Suppose that $M$ has two separating tori. If $\mathcal{T}\cap V_1$ consists of disks and the number of disks is minimal, then $\mathcal{T}\cap V_1$ consists of two mutually disjoint disks, i.e. the Heegaard splitting and the JSJ-tori satisfy Condition A. So $T_i\cap V_1$ is a disk $\bar{D}_i$ for $i=1,2$.
\end{proposition}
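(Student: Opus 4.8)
The plan is to reduce to, and adapt, Kobayashi's analysis of this configuration (Case 3 in pp.452--453 of \cite{KO}), running in parallel the minimality machinery already used for a single separating torus in Section~\ref{section2-1}. First I would record that each JSJ-torus $T_i$ is incompressible and not boundary-parallel in $M$, so it cannot be isotoped into either handlebody; in particular $T_i$ must meet the Heegaard surface $F$, and hence $T_i\cap V_1$ is non-empty. Since $\mathcal{T}\cap V_1$ is assumed to consist of disks, this already supplies at least one essential, non-$\partial$-parallel disk from each of $T_1$ and $T_2$, so $\mathcal{T}\cap V_1$ contains at least two disks. The whole content of the statement is therefore the reverse direction: that a minimal configuration cannot contain more than two disks.

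Next I would analyse how $\mathcal{T}\cap V_1$ cuts $V_1$. Because each $T_i$ separates $M$, the disks of $T_i\cap V_1$ together separate $V_1$, and the components of $V_1\setminus(\mathcal{T}\cap V_1)$ are two-coloured by the two sides of $T_i$; doing this for both tori simultaneously labels each component by the piece $M_1$, $M_2$ or $M_3$ in which it lies. I would then pass to the dual graph $\Gamma$, with one vertex per component and one edge per disk of $\mathcal{T}\cap V_1$. A component lying in $M_1$ is bounded only by disks of $T_1$, one in $M_3$ only by disks of $T_2$, and one in $M_2$ by disks of both; consequently every $T_1$-edge of $\Gamma$ joins an $M_1$-vertex to an $M_2$-vertex and every $T_2$-edge joins an $M_2$-vertex to an $M_3$-vertex. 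Thus $\Gamma$ is connected, meets all three labels, and has the shape of the JSJ-path $M_1-M_2-M_3$, with no $M_1$--$M_3$ edges and no monochromatic edges.

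I would then close the count using the genus of $V_1$. Writing $m$ for the number of disks and $P_j$ for the components, cutting the genus-two handlebody along the disks gives $\chi(V_1)=-1=\sum_j\chi(P_j)-m$, equivalently $2=b_1(\Gamma)+\sum_j\operatorname{genus}(P_j)$. If I can show that in a minimal configuration $\Gamma$ is a tree with exactly one vertex of each label, then $b_1(\Gamma)=0$, there are exactly two edges, and the genus is accounted for by the two end pieces being solid tori and the middle piece a $3$-ball --- which is precisely $T_i\cap V_1=\bar D_i$ as claimed, matching the picture of two parallel separating disks in Case~2 of Section~\ref{section2-1}.

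The main obstacle is exactly this last reduction: showing that minimality forbids any superfluous disk, i.e. that any $\Gamma$ with three or more edges is non-minimal. As Case~2 of Section~\ref{section2-1} already shows, two parallel disks of a single torus can survive in a minimal position, so parallelism alone is not a reduction; the cancellation must come from the joint behaviour of $T_1$ and $T_2$ and from the middle piece $M_2$ being pinched between them. Carrying this out is where Kobayashi's hierarchy of $\mathcal{T}'=\mathcal{T}\cap V_2$ and the associated isotopies of type A (see p.24 of \cite{JA}) do the real work: each superfluous disk supplies an essential arc along which a type-A isotopy pushes $\mathcal{T}$ to strictly decrease the disk count, contradicting minimality. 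Everything before that step --- incompressibility, the two-colouring, and the Euler-characteristic bookkeeping --- is only the scaffolding that forces the surviving minimal graph to be the two-edge path, hence $\mathcal{T}\cap V_1$ to consist of exactly two disks $\bar D_1$ and $\bar D_2$.
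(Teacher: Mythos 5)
The first thing to note is that the paper does not prove this proposition at all: like Proposition \ref{proposition-1}, it is imported verbatim from Kobayashi (Case 3, pp.~452--453 of \cite{KO}), so the relevant comparison is with Kobayashi's argument rather than with anything in this paper. Your lower bound is fine: each $T_i$ is incompressible, hence cannot lie inside a handlebody (the paper records this via Lemma 2.2 of \cite{KO3}), so each torus contributes at least one disk, and in a minimal configuration every disk is essential. Your bookkeeping is also correct: the two-colouring of edges of the dual graph $\Gamma$ and the identity $2=b_1(\Gamma)+\sum_j\operatorname{genus}(P_j)$ both hold.

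But the proposition \emph{is} the upper bound, and there you have a genuine gap. You assert that ``each superfluous disk supplies an essential arc along which a type-A isotopy pushes $\mathcal{T}$ to strictly decrease the disk count, contradicting minimality,'' but this is not an argument, and as a mechanism it is wrong: isotopies of type A are performed along essential arcs of $\mathcal{T}\cap V_2$ and trade disks for annuli (this is exactly how both this paper and \cite{KO} pass from the Condition A picture to the Condition B picture); they are not a device that deletes excess disks one at a time. Your own observation shows that no disk-by-disk cancellation can exist: in the one-torus case two parallel disks do survive in a minimal configuration (Case 2 of Section \ref{section2-1}), so minimality cannot be violated by the mere presence of an ``extra'' disk, and any proof must use the joint structure of $T_1$, $T_2$ and the middle piece $M_2$ pinched between them. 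Note also that the Euler-characteristic scaffolding cannot close this on its own: a configuration with two parallel $T_1$-disks and one $T_2$-disk, giving three vertices, three edges, $b_1(\Gamma)=1$ and a single solid-torus piece, passes your count and must be excluded by a genuinely topological minimality argument. Since the reduction ``minimal $\Rightarrow\Gamma$ is the two-edge path $M_1$--$M_2$--$M_3$'' is the entire content of the statement, the proposal as written does not prove it; it restates it in graph-theoretic language and defers the proof to the very source being quoted.
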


Note that a JSJ-torus must intersect $V_1$ since it is incompressible in $M$ (see Lemma 2.2 of \cite{KO3}.) So we only need to check this minimal components case for Condition A.

By Proposition \ref{proposition-5}, we get the possible situation when the JSJ-tori $\mathcal{T}$ and the Heegaard splitting satisfy Condition A. Now we have a hierarchy $(\mathcal{T}'^{(0)},a_0)$, $\cdots$, $(\mathcal{T}'^{(m)},a_m)$ of $\mathcal{T}'$ and a sequence of isotopies of type A which realizes the hierarchy where $a_i$ is an essential arc in $\mathcal{T}'^{(i)}$.

Let $\mathcal{T}^{(1)}$ be the image of $\mathcal{T}$ after an isotopy of type A at $a_0$, $\mathcal{T}^{(2)}$ be the image of $\mathcal{T}^{(1)}$ after an isotopy of type A at $a_1$. Then $\mathcal{T}^{(2)}\cap V_1$ consists of two essential annuli $A_1$ and $A_2$ (see the second paragraph in p.453 of \cite{KO}.)

Since $T_1$ and $T_2$ are separating JSJ-tori in $M$, $\{\bar{D}_1,\bar{D}_2\}$ cuts $V_1$ into three parts, say $V_1^1\subset M_1$, $V_1^2\subset M_2$ and $V_1^3\subset M_3$.\\

\Claim{
$\bar{D}_1$ and $\bar{D}_2$ are parallel and each $\bar{D}_i$ is a separating disk in $V_1$ for $i=1,2$. Moreover, $\{\bar{D}_1,\bar{D}_2\}$ cuts $V_1$ into three parts where one is a $3$-ball and the others are solid tori.}

\begin{proofc}
In this case, $\{A_1, A_2\}$ satisfies the conclusion (\ref{lemma-ko-3-4-2}) of Lemma \ref{lemma-ko-3-4} (see the third paragraph in p.453 of \cite{KO}.) So if we use the same arguments in the proof of Claim of Case 2 in the previous section, then we get the result.
This completes the proof of Claim.
\end{proofc}

By Claim, $\{\bar{D}_1,\,\bar{D}_2\}$ cuts $V_1$ into three parts, the $3$-ball $V_1^2$ and the solid tori $V_1^1$ and $V_1^3$ (see Figure \ref{figure-8}.) Since $G$ preserves each $M_i$ for $i=1,2,3$, it also preserves each $V_1^i$ for $i=1,2,3$. So by Lemma \ref{lemma-gm-12}, $G\cong\mathbb{Z}_2$ or $\mathbb{D}_2$. This completes the proof of Theorem \ref{main-theorem} when the JSJ-tori and the Heegaard splitting satisfy Condition A in the two separating JSJ-tori case.

\section{Proof of Theorem \ref{main-theorem} in the case of Condition A with two non-separating JSJ-tori.\label{section2-3}}

In this section, we will prove Theorem \ref{main-theorem} when the JSJ-tori and the Heegaard splitting satisfy Condition A in the two non-separating JSJ-tori case.

Let $T_1$ and $T_2$ be the non-separating JSJ tori in $M$ and $\mathcal{T}$ be $T_1\cup T_2$.
By this JSJ-decomposition, we will denote that $M=M_1 \cup_{T_1, T_2} M_2$ where each $M_i$ is a piece of the JSJ-decomposition for $i=1,2$. If the Heegaard splitting and the JSJ-tori satisfy Condition A, then $T_i\cap V_1$ is a disk for $i=1,2$. In particular, T. Kobayashi proved that there exists a sequence of ambient isotopies of $\mathcal{T}$ such that $T_1\cap V_1$ consists of a meridian disk after the isotopies in the proof of Theorem 1 of \cite{KO2}.

\begin{lemma}\label{lemma-thm-3}
Suppose that $M$ has two non-separating JSJ-tori. If the Heegaard splitting and the JSJ-tori satisfy Condition A, then $\mathcal{T}\cap V_1$ consists of two mutually disjoint, parallel, and non-separating disks in $V_1$.
\end{lemma}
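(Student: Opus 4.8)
The plan is to establish three properties of the two disks $\bar{D}_1 = T_1 \cap V_1$ and $\bar{D}_2 = T_2 \cap V_1$: that they are mutually disjoint, that they are non-separating, and that they are parallel in $V_1$. Disjointness is immediate, since $T_1$ and $T_2$ are distinct JSJ-tori and hence disjoint in $M$, so $\bar{D}_1 \cap \bar{D}_2 = \emptyset$. For non-separatingness of one disk I would invoke the result of \cite{KO2} quoted just above: after a suitable sequence of ambient isotopies of $\mathcal{T}$, the disk $\bar{D}_1 = T_1 \cap V_1$ is a meridian disk of $V_1$, hence non-separating by the definition of meridian disk. It therefore remains to prove that $\bar{D}_2$ is non-separating as well and that $\bar{D}_1$ and $\bar{D}_2$ are parallel.

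The key idea is a two-coloring argument coming from the decomposition $M = M_1 \cup_{T_1, T_2} M_2$. Since $\bar{D}_1 \cup \bar{D}_2 = \mathcal{T} \cap V_1$ is precisely the locus where $M_1$ and $M_2$ meet inside $V_1$, every component of $V_1 \setminus (\bar{D}_1 \cup \bar{D}_2)$ lies entirely in $M_1$ or entirely in $M_2$; color each piece accordingly. Each disk $\bar{D}_i$ sits on the torus $T_i$, which locally separates the $M_1$-side from the $M_2$-side, so $\bar{D}_i$ has an $M_1$-colored piece on one face and an $M_2$-colored piece on the other. I would then pass to the dual graph $\Gamma$ whose vertices are the complementary pieces and whose two edges are $\bar{D}_1, \bar{D}_2$, each joining the pieces on its two sides. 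The graph $\Gamma$ is connected because $V_1$ is, and the bicoloring forbids loops: no disk can present the same piece, hence the same color, on both of its faces.

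A connected, loopless graph with exactly two edges is either a path on three vertices (both edges bridges) or two vertices joined by a double edge (neither edge a bridge). A disk $\bar{D}_i$ is separating in $V_1$ exactly when it is a bridge of $\Gamma$, since cutting $V_1$ along $\bar{D}_i$ alone disconnects it iff deleting the corresponding edge disconnects $\Gamma$. Because $\bar{D}_1$ is non-separating it is not a bridge, which rules out the path; hence $\Gamma$ is the double edge on two vertices, neither disk is a bridge, and so $\bar{D}_2$ is non-separating too while $V_1 \setminus (\bar{D}_1 \cup \bar{D}_2)$ has exactly two components. An Euler-characteristic count then pins down the pieces: each is a handlebody, and since $\chi(V_1) = -1$ and cutting along the two disks adds $\chi(\bar{D}_1) + \chi(\bar{D}_2) = 2$, the genera of the two pieces sum to $1$, so one piece $X$ is a $3$-ball and the other a solid torus. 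Both $\bar{D}_1$ and $\bar{D}_2$ lie on $\partial X \cong S^2$, and $\partial X \setminus (\bar{D}_1 \cup \bar{D}_2)$ is an annulus contained in $\partial V_1$; thus $X$ is a product region realizing $\bar{D}_1 \times I$, which exhibits $\bar{D}_1$ and $\bar{D}_2$ as parallel in $V_1$ and completes the argument.

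The hard part will be excluding the configuration in which $\bar{D}_1$ and $\bar{D}_2$ are non-parallel meridian disks, for then they cut $V_1$ into a single $3$-ball. This is entirely consistent with $\bar{D}_1$ being a meridian, so it cannot be eliminated from the local picture of $V_1$ alone; it is precisely here that the global product structure $M = M_1 \cup_{T_1, T_2} M_2$ must enter, and the two-coloring packages exactly that information, since a single complementary piece cannot carry a consistent bicoloring. As an alternative route one could instead run the analysis through the two essential annuli $A_1, A_2 = \mathcal{T}^{(2)} \cap V_1$ together with Lemma \ref{lemma-ko-3-4}, translating the annular conclusions back to the disks by the surgery argument used in the Claim of Case 2; but the coloring argument appears to be the most direct way to force parallelism.
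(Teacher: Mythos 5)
Your coloring/dual-graph argument for parallelism is essentially the paper's own argument: the paper assumes the two non-separating disks are non-parallel, observes that then $V_1-\mathcal{T}$, and hence $M-\mathcal{T}$, would be connected, and contradicts the fact that the JSJ-tori split $M$ into two pieces; your prohibition of loops in $\Gamma$ is exactly that contradiction carried out inside $V_1$, and your Euler-characteristic and product-region steps are fine. The genuine gap is in the non-separation step. You deduce that $\bar{D}_1$ is non-separating from the quoted result of \cite{KO2} that \emph{after a sequence of ambient isotopies of $\mathcal{T}$} the intersection $T_1\cap V_1$ becomes a meridian disk. That is a non sequitur: the existence of an isotoped copy of $\mathcal{T}$ in nice position says nothing about the given position of $\mathcal{T}$, and both the hypothesis (Condition A) and the conclusion of the lemma refer to the given position. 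The point is not pedantic here, because in the application $\mathcal{T}$ is invariant under $G$, so it cannot be traded for an isotopic copy without destroying equivariance. Moreover your coloring cannot close this hole by itself: the ``path'' configuration, with both $\bar{D}_1$ and $\bar{D}_2$ separating in $V_1$ and the three pieces colored $M_1$, $M_2$, $M_1$, is perfectly consistent with the bicoloring and with the global count of two JSJ pieces, so some input beyond separation properties of $M-\mathcal{T}$ is required to exclude it.

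The paper fills exactly this hole with its Claim that both $\bar{D}_1$ and $\bar{D}_2$ are non-separating, proved by repeating the proof of Lemma 4.2 of \cite{KO2}, which is an argument about the configuration as given, using that $T_i$ is a non-separating incompressible torus in $M$. One self-contained way to run such an argument: if $\bar{D}_1$ were separating in $V_1$, then $\partial\bar{D}_1$ would be a separating curve on $F$, hence zero in $H_1(F;\mathbb{Z}_2)$; since $H_2(V_j)=0$, the maps $H_2(V_j,\partial V_j;\mathbb{Z}_2)\to H_1(\partial V_j;\mathbb{Z}_2)$ are injective, so both $[\bar{D}_1]$ and $[T_1\cap V_2]$ vanish in the respective relative homology groups, whence every loop in $V_1$ or in $V_2$ --- and by Mayer--Vietoris every loop in $M$ --- meets $T_1$ in an even number of points, forcing $T_1$ to be separating in $M$, a contradiction. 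With that (or with the paper's citation) in place of the isotopy appeal, the rest of your proof goes through.
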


\begin{proof}
Suppose that the Heegaard splitting and the JSJ-tori satisfy Condition A, i.e. $T_i\cap V_1=\bar{D}_i$ for $i=1,2$.\\

\Claim{
Both $\bar{D}_1$ and $\bar{D}_2$ are non-separating in $V_1$.
}

\begin{proofc}
The proof is exactly the same as the proof of Lemma 4.2 of \cite{KO2}.
\end{proofc}

By Claim, it suffices to show that $\bar{D}_1$ and $\bar{D}_2$ are parallel in $V_1$.

Suppose that $\bar{D}_1$ and $\bar{D}_2$ are non-parallel in $V_1$.  Then $\{\bar{D}_1,\bar{D}_2\}$ cuts $V_1$ into a $3$-ball, i.e. the result is connected. Since all components of $V_2-\mathcal{T}$ must meet $V_1-\{\bar{D}_1,\bar{D}_2\}$, $M-\mathcal{T}$ is connected (whether $V_2-\mathcal{T}$ is connected or not.) But the set of JSJ-tori must divide $M$ into several pieces in this case, we get a contradiction.
\end{proof}

By Lemma \ref{lemma-thm-3}, $\bar{D}_1$ and $\bar{D}_2$ are non-separating and parallel disks in $V_1$. So $\{\bar{D}_1,\bar{D}_2\}$ cuts $V_1$ into two parts, a $3$-ball and a solid torus. Since $G$ preserves each $M_i$ for $i=1,2$, it also preserves both the ball and the solid torus.
So $G\cong \mathbb{Z}_2$ or $\mathbb{D}_2$ by Lemma \ref{lemma-gm-2}.  This completes the proof of Theorem \ref{main-theorem} when the JSJ-tori and the Heegaard splitting satisfy Condition A in the two non-separating JSJ-tori case.

\section{Proof of Theorem \ref{main-theorem} in the case of Condition B with one separating JSJ-torus\label{section3-1}}
In this section, we will prove Theorem \ref{main-theorem} when the JSJ-tori and the Heegaard splitting satisfy Condition B in the one separating JSJ-torus case.

Let $T$ be the JSJ-torus. By this JSJ-decomposition, we will denote that $M=M_1 \cup_T M_2$  where each $M_i$ is a piece of the JSJ-decomposition for $i=1,2$.

At first, we consider the case when $T\cap V_1$ consists of minimal number of annuli.

If the Heegaard splitting and the JSJ-decomposition satisfy Condition B and the number of annuli are minimal, then each annulus must be induced from each disk of Case 1 or Case 2 in section \ref{section2-1} by an inverse operation of isotopy of type A. Moreover, we have seen that $T\cap V_1$ consists of a disk or two disks in Case 1 or Case 2 of section \ref{section2-1}. So the proof is divided into two cases.

\Case{1} $T\cap V_1$ consists of an annulus $A$. Since $A$ is induced from the disk $\bar{D}$ of Condition A, $\bar{D}$ is a separating disk in $V_1$ as in section \ref{section2-1}. So $A$ cuts $V_1$ into a genus two handlebody $V_1^1$ and a solid torus $V_1^2$ (see Figure \ref{figure-th21-1}.) Since $G$ preserves each piece of the JSJ-decomposition, it also preserves both $V_1^1$ and $V_1^2$. Hence we get $G\cong \mathbb{Z}_2$ or $\mathbb{D}_2$ by Lemma \ref{lemma-gm-an-1} ($V_1^1$ has a meridian disk of $V_1$.)

\begin{figure}
\centering
\includegraphics[viewport=28 516 415 777, width=7cm]{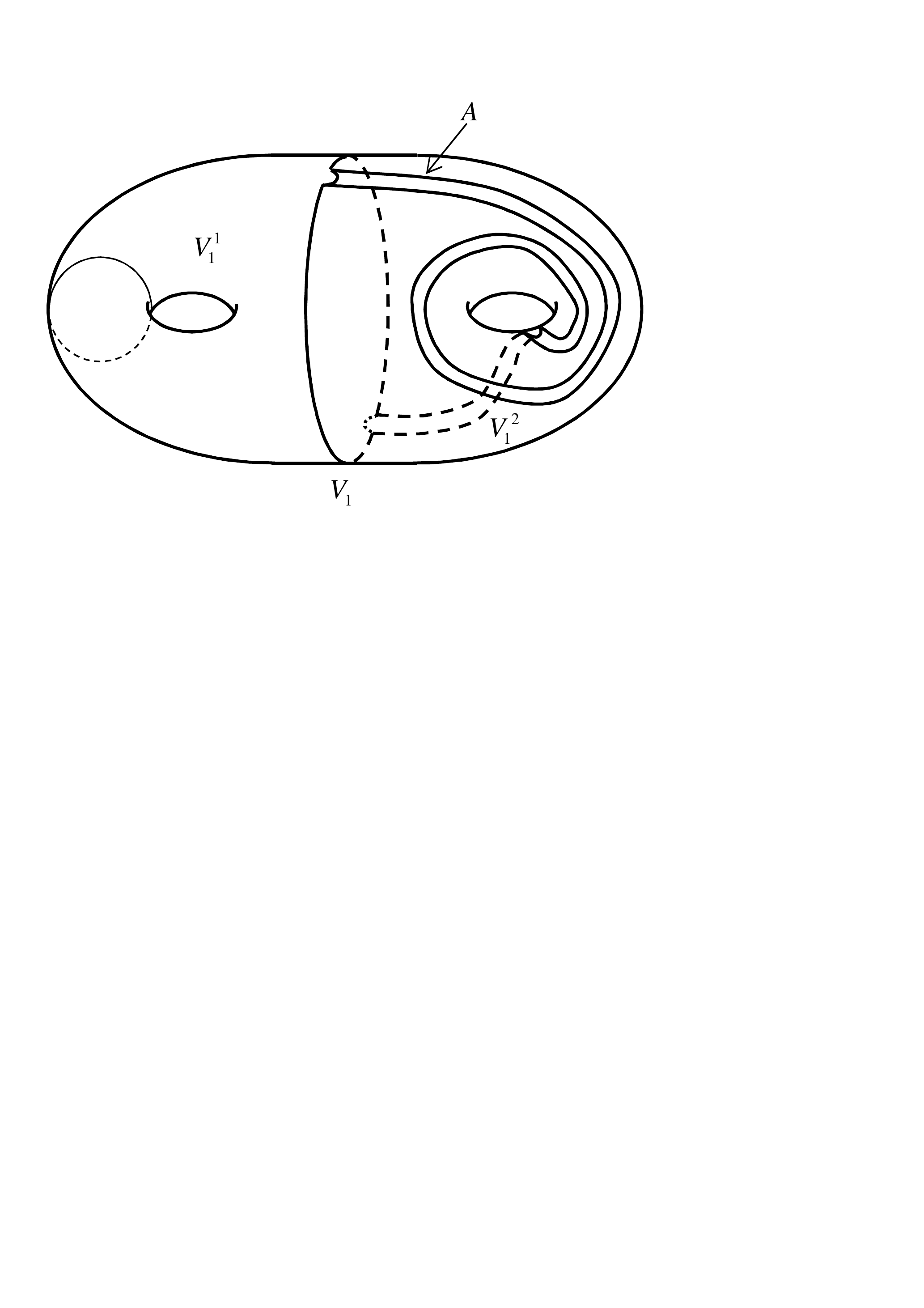}
\caption{$T\cap V_1$ consists of an annulus.\label{figure-th21-1}}
\end{figure}

\Case{2} $T\cap V_1$ consists of two annuli $A_1$ and $A_2$. Since each $A_i$ is induced from the disk $\bar{D}_i$ of Condition A for $i=1,2$, $\{A_1, A_2\}$ satisfies the conclusion (\ref{lemma-ko-3-4-1}) or (\ref{lemma-ko-3-4-2}) of Lemma \ref{lemma-ko-3-4} by the arguments of section \ref{section2-1} (see (i) or (ii) of Figure \ref{fig-lemma-ko-3-4}.) In the former case, $\{A_1,A_2\}$ cuts $V_1$ into a genus two handlebody $V_1^1$ and a solid torus $V_1^2$. Since $G$ preserves each piece of the JSJ-decomposition, it also preserves both $V_1^1$ and $V_1^2$. Hence we get $G\cong \mathbb{Z}_2$ or $\mathbb{D}_2$ by Lemma \ref{lemma-gm-an-1} ($V_1^1$ has a meridian disk of $V_1$.) In the latter case, $\{A_1,A_2\}$ cuts $V_1$ into solid tori $V_1^1$ and $V_1^3$ and a genus two handlebody $V_1^2$, where the gluing of the pieces follows the order of numberings. Therefore, $V_1$ and $V_3$ are contained in a piece of the JSJ-decomposition and $V_2$ is contained in the other piece of the JSJ-decomposition. So $G$ preserves $V_1^i$ for $i=1,2,3$, or $G$ preserves $V_1^1$ and exchanges $V_1^1$ and $V_1^3$. So $G\cong \mathbb{Z}_2$ or $\mathbb{D}_2$ by Lemma \ref{lemma-gm-an-2} (in (ii) of Figure \ref{fig-lemma-ko-3-4}, we can take a meridian disk of $V_1$ such that it intersects only one of $V_1^1$ and $V_1^3$, and also intersects $V'=\overline{V_1}\cup\overline{V_2}$ (or $V'=\overline{V_3}\cup\overline{V_2}$) by a set of meridian disks of $V'$.)

Next, we consider the case when $T\cap V_1$ consists of non-minimal number of annuli.

Since Condition B assumes that $T\cap V_1$ consists of at most two annuli and the annuli are not $\partial$-parallel in $V_1$, we only need to check the case when $T\cap V_1$ consists of two essential annuli $A_1$ and $A_2$ in $V_1$ in general situation. In order to prove, we need the following lemma.

\begin{lemma}\label{lemma-ko-3-2}(\cite{KO}, Lemma 3.2) If $A$ is an essential annulus in a genus two handlebody $V$ then either
    \begin{enumerate}[(i)]
        \item $A$ cuts $V$ into a solid torus $V_1$ and a genus two handlebody $V_2$ and there is a complete system of meridian disks $\{D_1,D_2\}$ of $V_2$ such that $D_1\cap A=\emptyset$ and $D_2\cap A$ is an essential arc of $A$ (see the upper of Figure \ref{fig-lemma-ko-3-2},) or\label{lemma-ko-3-2-1}
        \item $A$ cuts $V$ into a genus two handlebody $V'$ and there is a complete system of meridian disks $\{D_1,D_2\}$ of $V'$ such that $D_1\cap A$ is an essential arc of $A$ (see the lower of Figure \ref{fig-lemma-ko-3-2}.)\label{lemma-ko-3-2-2}
    \end{enumerate}
\end{lemma}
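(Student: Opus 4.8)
The plan is to study the manifold obtained by cutting $V$ open along $A$, to read off the two possible topological types from an Euler characteristic count, and finally to produce the advertised meridian systems by a $\partial$-compression argument. First I would record the elementary fact that, since $A$ is essential, each component of $\partial A$ is essential in the genus two surface $\partial V$: a boundary circle of $A$ is isotopic to the core of $A$, so if it bounded a disk in $\partial V$ the core would be null-homotopic in $V$, contradicting incompressibility of $A$. Now cut $V$ along $A$ to obtain $V|A$, whose boundary contains two parallel copies of $A$. Since $V$ is irreducible and $A$ is incompressible, $V|A$ is irreducible, and it is a standard consequence of cutting a handlebody along an incompressible annulus that each component of $V|A$ is again a handlebody. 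Because $\chi(A)=0$ we have the bookkeeping $\chi(V|A)=\chi(V)=-1$, which I will use to pin down the genera of the pieces.

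Next I would split according to whether $A$ separates $V$. If $A$ is non-separating, then $V|A$ is connected with $\chi=-1$, so it is a genus two handlebody $V'$; this is the topological content of conclusion (\ref{lemma-ko-3-2-2}). If $A$ is separating, then $V|A$ has two handlebody components of genera $g_1,g_2$ with $(1-g_1)+(1-g_2)=-1$, i.e. $g_1+g_2=3$. A $3$-ball component (genus $0$) is impossible: its boundary $2$-sphere would be the union of a copy $A^*$ of $A$ with the planar surface $\partial W\cap\partial V$, which must then be two disks, forcing $\partial A$ to bound disks in $\partial V$, contrary to the first paragraph. Hence both genera are at least $1$, and $g_1+g_2=3$ forces $\{g_1,g_2\}=\{1,2\}$; thus the pieces are a solid torus $V_1$ and a genus two handlebody $V_2$, which is the topological content of conclusion (\ref{lemma-ko-3-2-1}).

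It remains to produce the complete meridian systems with the stated intersection pattern, and this is the part I expect to be the main obstacle. The key observation is that an essential annulus in a handlebody is always $\partial$-compressible: starting from any meridian disk of $V$ meeting $A$ and applying an innermost-disk/outermost-arc cleanup, one isotopes so that the intersection consists of spanning arcs, and an outermost such arc supplies a disk $\Delta$ with $\Delta\cap A$ an essential arc of $A$ and $\Delta\cap\partial V$ an arc. In case (\ref{lemma-ko-3-2-2}) I would take $D_1$ to be a meridian disk of $V'$ extending such a $\Delta$, so that $D_1\cap A$ is an essential arc, choosing $D_1$ non-separating; completing $\{D_1\}$ to a full system is then routine, since cutting $V'$ along a non-separating meridian disk leaves a solid torus carrying the remaining disk $D_2$. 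In case (\ref{lemma-ko-3-2-1}) the same $\partial$-compression supplies the disk $D_2$ of $V_2$ with $D_2\cap A$ an essential arc, and the delicate point is exhibiting the complementary disk $D_1$ with $D_1\cap A=\emptyset$. For this I would work on the side of $A$ away from the solid torus $V_1$ and use that the core of $A$ runs essentially in $V_1$ (it is not a meridian, else $A$ would compress), so that $A$ cannot be isotoped off every meridian system of $V_2$ simultaneously; the careful bookkeeping confirming that these two disjoint disks form a complete system, and that no further configurations occur, is where the bulk of the work lies. (This is exactly the structural input later fed into Lemma \ref{lemma-ko-3-4}.)
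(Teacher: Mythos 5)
First, a point of reference: the paper does not prove this lemma at all --- it is quoted as an external result (Lemma 3.2 of \cite{KO}) --- so there is no internal proof to compare against and your proposal must stand on its own. Its first two paragraphs do stand. The essentiality of $\partial A$ in $\partial V$, the irreducibility and $\pi_1$-injectivity of the pieces of $V|A$, the count $\chi(V|A)=\chi(V)=-1$, and the exclusion of a $3$-ball component are all correct; combined with the (citable, though itself substantial) theorem that a compact orientable irreducible $3$-manifold with free fundamental group and nonempty boundary is a handlebody, this correctly pins down the topological alternatives: solid torus plus genus two handlebody when $A$ separates, a single genus two handlebody when it does not.

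The genuine gap is exactly where you concede ``the bulk of the work lies'': the complete meridian systems are the real content of the lemma (they are what is consumed by Lemma \ref{lemma-ko-3-4} and the Claims of Sections \ref{section2-1} and \ref{section2-2}), and your third paragraph asserts rather than constructs them. Concretely, three holes. (a) In case (\ref{lemma-ko-3-2-2}), ``choosing $D_1$ non-separating'' is not a free choice: a $\partial$-compressing disk of $A$, viewed in $V'$, could a priori be separating, and ``extending $\Delta$ to a meridian disk'' is not a defined operation; non-separation must be proved. (b) In case (\ref{lemma-ko-3-2-1}), essentiality of $A$ guarantees a $\partial$-compressing disk on \emph{some} side, but the lemma needs it inside $V_2$; your sketch does not exclude that $A$ only $\partial$-compresses into the solid torus $V_1$. (In fact it cannot: a $\partial$-compressing disk in $V_1$ would have boundary a meridian of $V_1$ meeting the core of $A$ once, forcing that core to have winding number $\pm1$, which makes $V_1$ a parallelism region and $A$ boundary-parallel --- but this argument is absent.) (c) In case (\ref{lemma-ko-3-2-1}) the disk $D_1$ with $D_1\cap A=\emptyset$ is never produced; the remark that $A$ ``cannot be isotoped off every meridian system of $V_2$'' is not a construction and does not yield a disk disjoint from $A$ that, together with $D_2$, cuts $V_2$ into a ball. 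All three holes close at once if you reverse the order of reasoning: surger $A$ along $\Delta$ to get a properly embedded disk $D'$, check that $D'$ is essential and that $[D']=[A]$ in $H_2(V,\partial V;\mathbb{Z}_2)$ (so $D'$ and $A$ are separating or non-separating together); then $V|D'$ consists of one or two solid tori by the elementary fact about cutting handlebodies along essential disks, $A$ is an incompressible, hence boundary-parallel, annulus in its solid-torus piece, and reconstructing $V|A$ from the resulting product region exhibits it as a boundary connected sum in which $\Delta$ itself and a meridian disk of the solid-torus summand disjoint from $A$ form the required complete system, with the stated intersection pattern visible by construction; this route also dispenses with the free-group theorem entirely.
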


\begin{figure}
\centering
\includegraphics[viewport=31 128 508 780, width=7cm]{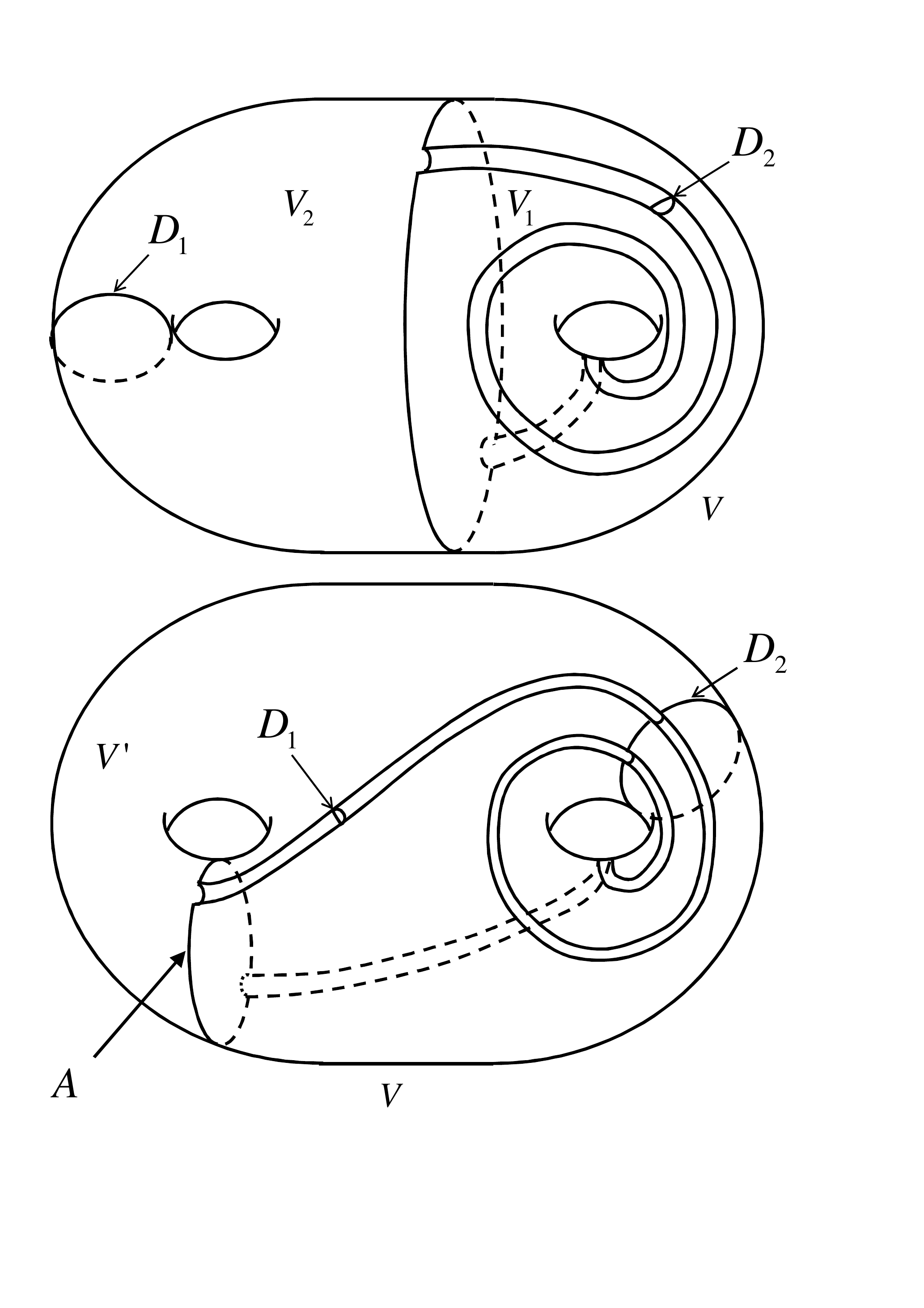}
\caption{Two possible positions of an essential annulus in a genus two handlebody.\label{fig-lemma-ko-3-2}}
\end{figure}

\Case{3} $A_1$ and $A_2$ are parallel in $V_1$. If $A_1$ is non-separating in $V_1$, then $\{A_1,A_2\}$ cuts $V_1$ into a genus two handlebody $V_1^1$ and a solid torus $V_1^2$ (see (ii) of Lemma \ref{lemma-ko-3-2} and (ii) of Figure \ref{figure-add-ad3}.) In this case, $V_1^1$ has a meridian disk of $V_1$ and both $V_1^1$ and $V_1^2$ are non-contractible in $V_1$, so we get $G\cong \mathbb{Z}_2$ or $\mathbb{D}_2$ by Lemma \ref{lemma-gm-an-1} (we can assume $V_1^1$ has a meridian disk of $V_1$ since we can take a meridian disk which is parallel to the disk obtained by a surgery along an essential arc on $A_1$ or $A_2$.)

If $A_1$ is separating in $V_1$, then $\{A_1,A_2\}$ cuts $V_1$ into a genus two handlebody $V_1^1$ and two solid tori $V_1^2$ and $V_1^3$ (see (i) of Lemma \ref{lemma-ko-3-2} and (i) of Figure \ref{figure-add-ad3}.) In this case, the only genus two handlebody component $V_1^1$ is preserved by $G$. Let $A_1$ be $\overline{V_1^1}\cap\overline{V_1^2}$ and $A_2$ be $\overline{V_1^3}\cap\overline{V_1^2}$ as Figure \ref{figure-add-ad3}. Then $\overline{V_1^2}$ meets both $A_1$ and $A_2$, but $\overline{V_1^3}$ meets only $A_2$, so $G$ cannot exchange $V_1^2$ and $V_1^3$ (since $A_1\cup A_2=T\cap V_1$,  $A_1\cup A_2$ is preserved by $G$.) So both $V_1^2$ and $V_1^3$ are preserved by $G$. Let $\bar{V}_1$ be the genus two handlebody $\overline{V_1^1}\cup \overline{V_1^2}$. Then $V_1^1$ has a meridian disk of $\bar{V}_1$ and both $V_1^1$ and $V_1^2$ are non-contractible in $\bar{V}_1$, so we get $G\cong \mathbb{Z}_2$ or $\mathbb{D}_2$ by Lemma \ref{lemma-gm-an-1}.

\begin{figure}
\centering
\includegraphics[viewport=26 231 416 780, width=7cm]{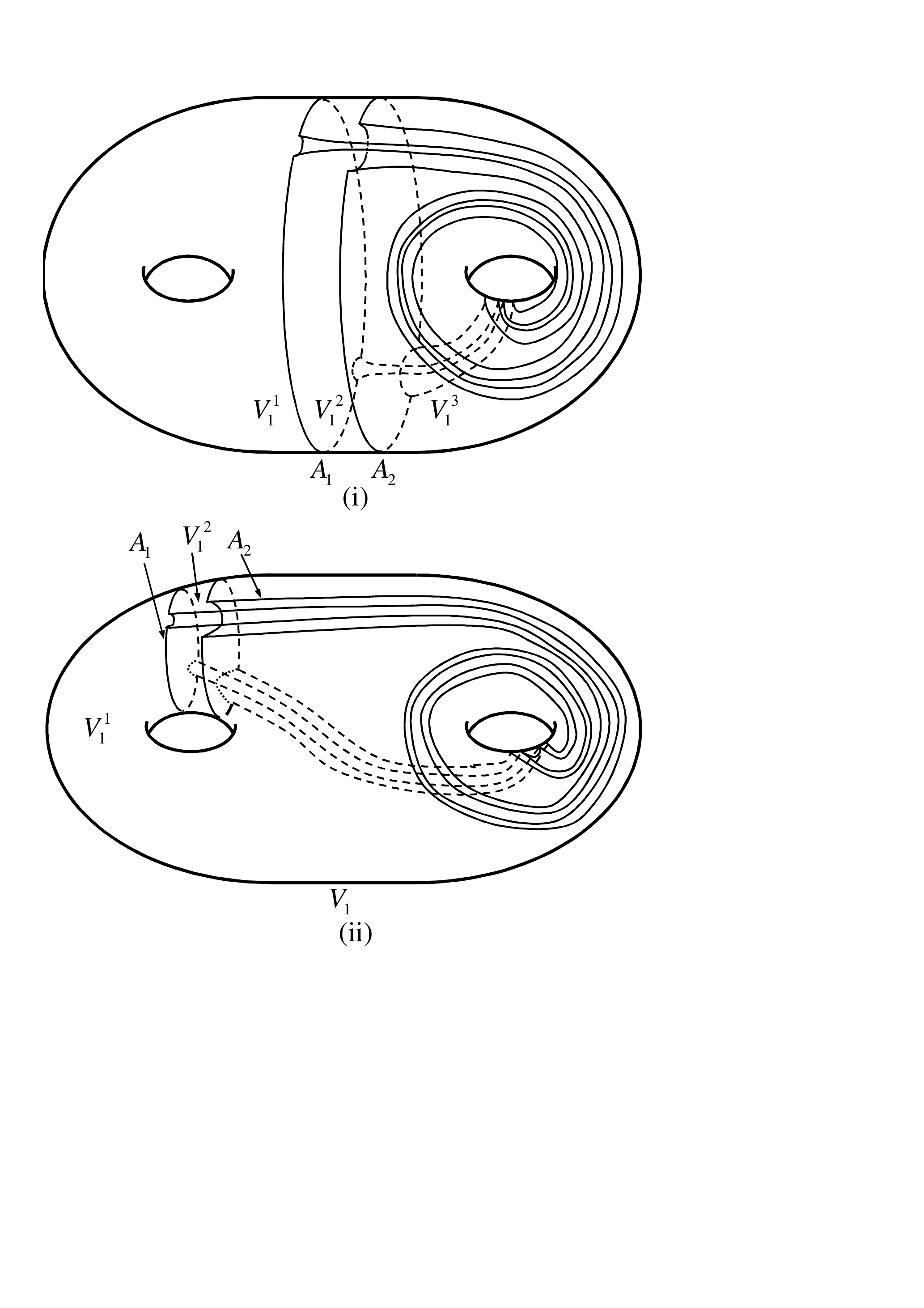}
\caption{$A_1$ and $A_2$ are parallel in $V_1$.\label{figure-add-ad3}}
\end{figure}

\Case{4} $A_1$ and $A_2$ are not parallel in $V_1$. Then $\{A_1,A_2\}$ satisfies one of conclusions of Lemma \ref{lemma-ko-3-4}. But (\ref{lemma-ko-3-4-3}) of Lemma \ref{lemma-ko-3-4} contradicts the fact the JSJ-torus $T$ separates $M$ into two pieces using similar arguments in Case 5 of section \ref{section2-1} (one of $A_1$ and $A_2$ is non-separating in $V_1$, and the other is separating in $V_1$.) So $\{A_1,A_2\}$ satisfies (\ref{lemma-ko-3-4-1}) or (\ref{lemma-ko-3-4-2}) of Lemma \ref{lemma-ko-3-4}. Therefore, we get $G\cong \mathbb{Z}_2$ or $\mathbb{D}_2$ using the arguments in Case 2.

This completes the proof of Theorem \ref{main-theorem} when the JSJ-tori and the Heegaard splitting satisfy Condition B in the one separating JSJ-torus case.

\section{Proof of Theorem \ref{main-theorem} in the case of Condition B with two separating JSJ-tori\label{section3-2}}

In this section, we will prove Theorem \ref{main-theorem} when the JSJ-tori and the Heegaard splitting satisfy Condition B in the two separating JSJ-tori case.

Let $T_1$ and $T_2$ be the JSJ-tori. By this JSJ-decomposition, we will denote that $M=M_1 \cup_{T_1} M_2 \cup_{T_2}M_3$ where each $M_i$ is a piece of the JSJ-decomposition for $i=1,2,3$. Let $A_1$ and $A_2$ be the annuli $T_1\cap V_1$ and $T_2\cap V_1$.

If the Heegaard splitting and the JSJ-decomposition satisfy Condition B, then each annulus $A_i$ must be induced from each disk $\bar{D}_i$ of Condition A in section \ref{section2-2} by an inverse operation of isotopy of type A. Moreover, we have seen that $\{A_1,A_2\}$ satisfies the conclusion (\ref{lemma-ko-3-4-2}) of Lemma \ref{lemma-ko-3-4} in section \ref{section2-2}, i.e. $\{A_1,\,A_2\}$ cuts $V_1$ into two solid tori $V_1^1$ and $V_1^3$, and a genus two handlebody $V_1^2$ (see (ii) of Figure \ref{fig-lemma-ko-3-4}.) Since $G$ preserves each piece of the JSJ-decomposition, it also preserves each $V_1^i$ for $i=1,2,3$. So $G\cong \mathbb{Z}_2$ or $\mathbb{D}_2$ by Lemma \ref{lemma-gm-an-2}. This completes the proof of Theorem \ref{main-theorem} when the JSJ-tori and the Heegaard splitting satisfy Condition B in the two separating JSJ-tori case.

\section{Proof of Theorem \ref{main-theorem} in the case of Condition B with two non-separating JSJ-tori.\label{section3-3}}

In this section, we will prove Theorem \ref{main-theorem} when the JSJ-tori and the Heegaard splitting satisfy Condition B in the two non-separating JSJ-tori case.

Let $T_1$ and $T_2$ be the JSJ-tori. By this JSJ-decomposition, we will denote that $M=M_1 \cup_{T_1, T_2} M_2$ where each $M_i$ is a piece of the JSJ-decomposition for $i=1,2$. Let $A_1$ and $A_2$ be the annuli $T_1\cap V_1$ and $T_2\cap V_1$.

If the Heegaard splitting and the JSJ-decomposition satisfy Condition B, then each annulus $A_i$ must be induced from each disk $\bar{D}_i$ of Condition A in section \ref{section2-3} by an inverse operation of isotopy of type A. We have seen that $\bar{D}_1$ and $\bar{D}_2$ are non-separating, disjoint and  parallel disks in $V_1$ from Lemma \ref{lemma-thm-3}. Since each $\bar{D}_i$ is non-separating in $V_1$ for $i=1,2$, each $A_i$ is also non-separating in $V_1$ for $i=1,2$ (see Figure \ref{fig-lemma-ko-3-2}.) Now the proof is divided into two cases.

\Case{1} $A_1$ and $A_2$ are parallel in $V_1$.

Since each $A_i$ is non-separating in $V_1$ for $i=1,2$, we get the conclusion (\ref{lemma-ko-3-2-2}) of Lemma \ref{lemma-ko-3-2} for $A_1$ (so for $A_2$). So $\{A_1,\,A_2\}$ cuts $V_1$ into a genus two handlebody $V_1^1$ and a solid torus $V_1^2$ (see (ii) of Figure \ref{figure-add-ad3}.) Since $G$ preserves each piece of the JSJ-decomposition, it also preserves both $V_1^1$ and $V_1^2$. So we get $G\cong \mathbb{Z}_2$ or $\mathbb{D}_2$ by Lemma \ref{lemma-gm-an-1}.

\Case{2} $A_1$ and $A_2$ are non-parallel in $V_1$. Then $\{A_1, A_2\}$ satisfies the conclusion (\ref{lemma-ko-3-4-1}) of Lemma \ref{lemma-ko-3-4} since $\bar{D}_1$ and $\bar{D}_2$ are both non-separating in $V_1$, i.e. $\{A_1,\,A_2\}$ cuts $V_1$ into a genus two handlebody $V_1^1$ and a solid torus $V_1^2$ (see (i) of Figure \ref{fig-lemma-ko-3-4}.) Since $G$ preserves each piece of the JSJ-decomposition, it also preserves both $V_1^1$ and $V_1^2$. So we get $G\cong \mathbb{Z}_2$ or $\mathbb{D}_2$ by Lemma \ref{lemma-gm-an-1}.

This completes the proof of Theorem \ref{main-theorem} when the JSJ-tori and the Heegaard splitting satisfy Condition B in the two non-separating JSJ-tori case.

\bibliographystyle{plain}

\end{document}